\newtheorem{theorem}{Theorem}[section]
\newtheorem{proposition}[theorem]{Proposition}
\newtheorem{lemma}[theorem]{Lemma}
\newtheorem{deflem}[theorem]{Definition-Lemma}
\theoremstyle{definition}
\newtheorem{definition}[theorem]{Definition}
\theoremstyle{remark}
\newtheorem{remark}[theorem]{Remark}
\newtheorem{example}[theorem]{Example}
\theoremstyle{plain}
\newcommand{\thistheoremname}{}
\newtheorem{genericthm}[theorem]{\thistheoremname}
\newtheorem*{genericthm*}{\thistheoremname}
\newenvironment{namedthm*}[1]
  {\renewcommand{\thistheoremname}{#1}%
   \begin{genericthm*}}
  {\end{genericthm*}}
\newcommand\cM{\mathcal{M}}
\newcommand{\bR}{\mathbb{R}}
\newcommand{\bZ}{\mathbb{Z}}
\newcommand\ba{\mathbf{a}}
\newcommand\bb{\mathbf{b}}
\newcommand\bc{\mathbf{c}}
\newcommand\bm{\mathbf{m}}
\newcommand\bn{\mathbf{n}}
\newcommand\bzero{\mathbf{0}}
\newcommand{\btB}{{\mathbf{2B}}}
\newcommand{\sB}{\mathscr{B}}
\newcommand{\stB}{2\mathscr{B}}
\newcommand{\on}{\operatorname}
\newcommand\pt{{\on{pt}}}
\newcommand{\Fuk}{\on{Fuk}}
\newcommand{\comp}{C^2}
\renewcommand{\comp}{{\on{comp}}}
\newcommand{\seam}{{\on{seam}}}
\newcommand{\mk}{{\on{mark}}}
\newcommand{\incom}{\on{in}}
\newcommand{\inte}{{\on{int}}}
\renewcommand{\root}{{\on{root}}}
\newcommand{\rk}{{\on{rk}\:}}
\newcommand{\dist}{{\on{dist}}}
\newcommand{\tree}{{\on{tree}}}
\newcommand{\br}{{\on{br}}}
\newcommand{\cl}{\mathrm{cl}}
\newcommand{\out}{{\on{out}}}
\renewcommand{\top}{{\on{top}}}
\newcommand\qu{/\kern-.7ex/} 
\newcommand\lqu{\backslash \kern-.7ex \backslash}
\newcommand{\ol}{\overline}
\newcommand{\sr}{\stackrel}
\newcommand{\wh}{\widehat}
\newcommand{\wt}{\widetilde}
\newcommand{\eps}{\epsilon}
\def\hra{\hookrightarrow}
\def\lra{\longrightarrow}
\newcounter{qcounter}
\newcommand\quotient[2]{
        \mathchoice
            {
                \text{\raise1ex\hbox{$#1$}\Big/\lower1ex\hbox{$#2$}}%
            }
            {
                #1\,/\,#2
            }
            {
                #1\,/\,#2
            }
            {
                #1\,/\,#2
            }
    }
\newcommand\quoti[2]{
                \text{\raise1ex\hbox{$#1$}/\lower1ex\hbox{$\scriptstyle#2$}}
  }
\newcommand\quot[2]{
                \text{\raise1ex\hbox{$#1\!\!$}/\lower1ex\hbox{$\!\scriptstyle#2$}}
  }
\newcommand\quo[2]{
                \text{\raise.8ex\hbox{$\scriptstyle#1\!$}/\lower.8ex\hbox{$\!\scriptstyle#2$}}
  }
\newcommand\qq[2]{
                \text{\raise.8ex\hbox{$#1\!$}/\lower.8ex\hbox{$#2$}}
}
\begin{document}

\title{2-associahedra}
\author{Nathaniel Bottman}
\address{School of Mathematics, Institute for Advanced Study,
1 Einstein Dr, Princeton, NJ 08540}
\email{\href{mailto:nbottman@math.ias.edu}{nbottman@math.ias.edu}}

\maketitle

\begin{abstract}  
For any $r\geq 1$ and $\bn \in \bZ_{\geq0}^r \setminus \{\mathbf0\}$ we construct a poset $W_\bn$ called a \emph{2-associahedron}.
The 2-associahedra arose in symplectic geometry, where they are expected to control maps between Fukaya categories of different symplectic manifolds.
We prove that the completion $\wh{W_\bn}$ is an abstract polytope of dimension $|\bn|+r-3$.
There are forgetful maps $W_\bn \to K_r$, where $K_r$ is the $(r-2)$-dimensional associahedron, and the 2-associahedra specialize to the associahedra (in two ways) and to the multiplihedra.
In an appendix, we work out the 2- and 3-dimensional 2-associahedra in detail.
\end{abstract}

\setcounter{tocdepth}{1}
\tableofcontents

\section{Introduction}
\label{sec:intro}

Ma'u--Wehrheim--Woodward proposed in \cite{mww} that a Lagrangian correspondence $M_1 \sr{L_{12}}{\lra} M_2$ between symplectic manifolds should induce an $A_\infty$-functor $\Fuk(M_1) \sr{\Phi(L_{12})}{\lra} \Fuk(M_2)$ between Fukaya categories, where $\Phi(L_{12})$ is defined by counting pseudoholomorphic quilted disks.
In his thesis \cite{b:thesis}, the author suggested extending this proposal by counting witch balls, which are pseudoholomorphic quilted spheres.
The underlying domain moduli spaces (described in \S\ref{ss:motivation}) are stratified topological spaces, and the posets indexing the strata have interesting combinatorial structure, similarly to how the Stasheff polytopes have strata indexed by
the associahedra.
In this paper we define these underlying posets, which we call \emph{2-associahedra}.

In this introduction, we will first motivate the construction of the 2-associahedra by describing some bubbling phenomena in spaces of witch curves, which are the moduli spaces of domains of witch balls.
After that, we will give a plan for the body of the paper.

\subsection{Motivation for \texorpdfstring{$W_\bn$}{Wn} from witch curves}
\label{ss:motivation}

The author was led to define and study the 2-associahedra
$(W_\bn)$
because they are the posets corresponding to degenerations in the moduli space $\ol{2\cM}_\bn$ of \emph{witch curves}
--- similarly to how the compactified moduli space of $r$ points on the line modulo translations and dilations is stratified by the associahedron $K_r$.
$\ol{2\cM}_\bn$
is defined in \cite{b:realization}, crucially relying on the current paper's construction of $W_\bn$.
Here we sketch the definition of $\ol{2\cM}_\bn$ to motivate the definition of
$W_\bn$.

We begin by defining the uncompactified moduli space:
\begin{align}
2\cM_\bn \coloneqq \left\{
\begin{array}{rcl}
(x_1,\ldots,x_r) &\in& \bR^r \\
(y_{11},\ldots,y_{1n_1}) &\in& \bR^{n_1} \\
&\vdots& \\
(y_{r1},\ldots,y_{rn_r}) &\in& \bR^{n_r}
\end{array}
\:\left|\:
\begin{array}{c}
x_1 < \cdots < x_r \\
y_{11} < \cdots < y_{1n_1} \\
\vdots \\
y_{r1} < \cdots < y_{rn_r}
\end{array}
\right.\right\}\Big/\bR^2 \rtimes \bR_{>0} \eqqcolon X/G.
\end{align}
We view an element of $X$ as describing a configuration of $r$ vertical lines in $\bR^2$ with $x$-positions $x_1, \ldots, x_r$, along with $n_i$ marked points on the $i$-th line with $y$-positions $y_{i1}, \ldots, y_{in_i}$.
(By identifying $\bR^2 \cup \{\infty\} \simeq S^2$, we can also view an element of $X$ as a configuration of marked circles on $S^2$, where all the circles intersect tangentially at the south pole.)
We view $G$ as the group of affine-linear transformations of the plane which consist of a translation and a positive dilation, which we can extend to define an action of $G$ on $X$.

\begin{figure}[H]
\centering
\includegraphics[width=0.4\columnwidth]{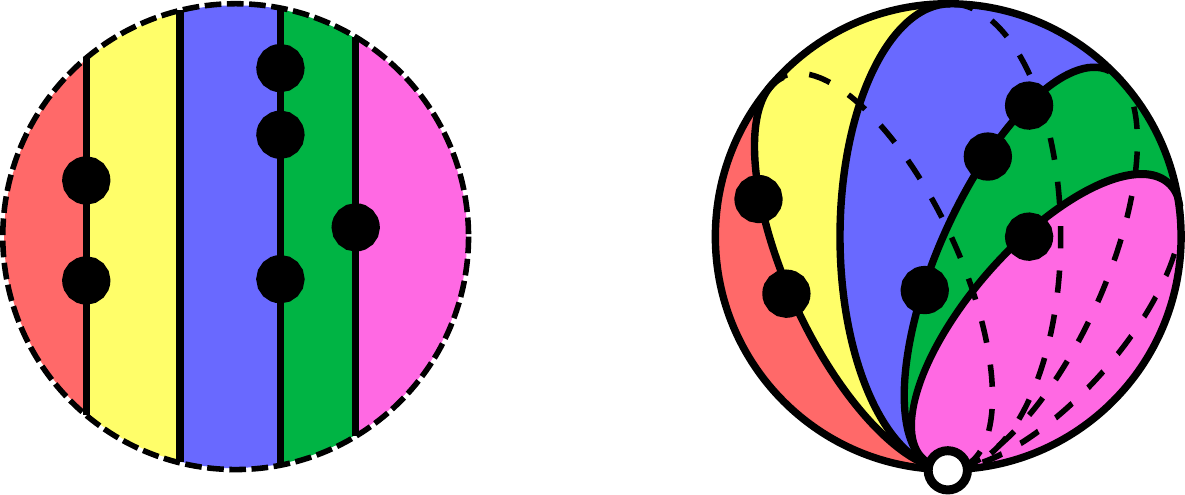}
\caption{Two views of a witch curve in the main stratum $2\cM_\bn \subset \ol{2\cM}_\bn$: on the left, as $\bR^2$ with marked lines; and on the right, as $S^2$ with marked circles.}
\label{fig:witch_ball}
\end{figure}

$2\cM_\bn$ is not compact: points on a single line can collide, or lines can collide, or these two phenomena can take place simultaneously.
We compactify $2\cM_\bn$ to $\ol{2\cM}_\bn$ like so: when a collection of lines collide, then wherever the marked points on these lines are as this collision happens, we bubble off another configuration of lines and points.
To define $\ol{2\cM}_\bn$ precisely, we need to specify the allowed degenerations, and this is where the 2-associahedra come in: the elements of $W_\bn$ correspond to the allowed degenerations in $\ol{2\cM}_\bn$.
We illustrate this in the following figure: on the left is the compactified moduli space $\ol{2\cM}_{200}$, and in the middle and on the right are two presentations of $W_{200}$.

\begin{figure}[H]
\centering
\def\svgwidth{1.0\columnwidth}
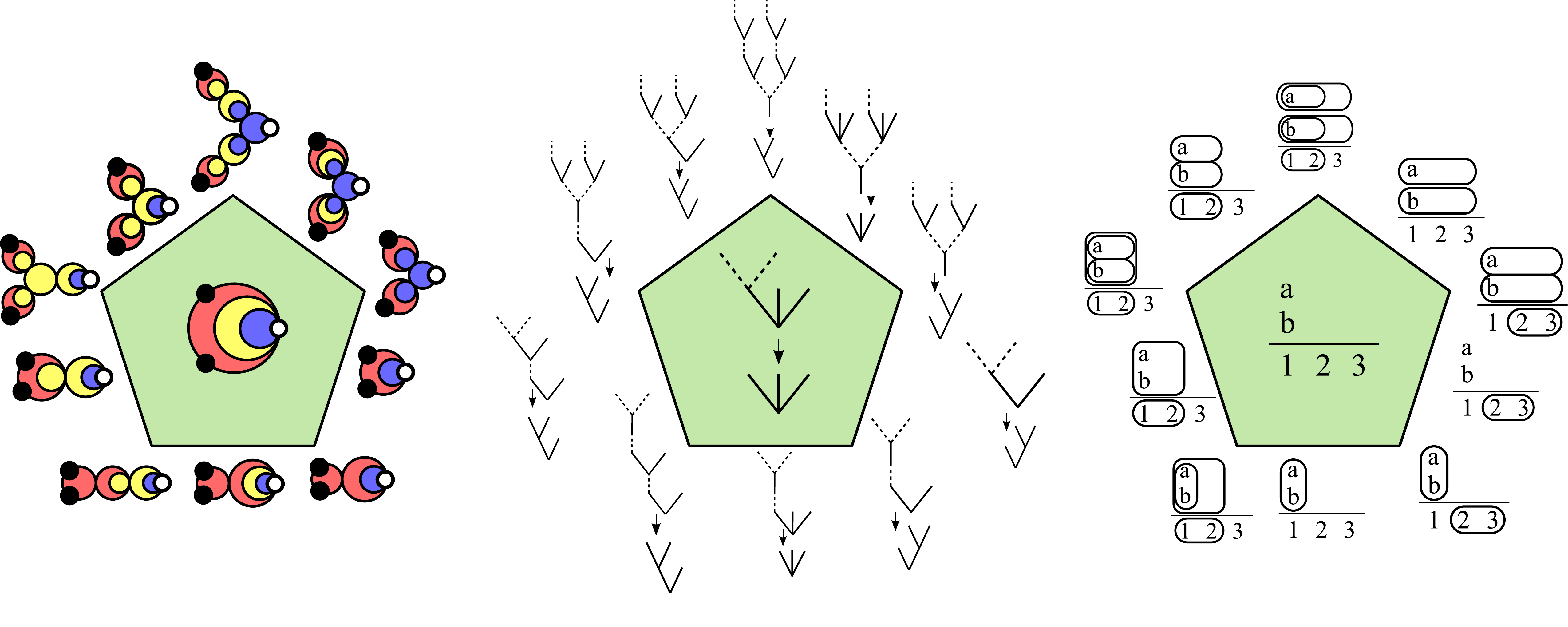
\end{figure}

\noindent
On the left, we are identifying configurations of lines and points in the plane with configurations of vertical lines in the right half-plane, which can in turn be identified with configurations of circles on a disk, all of which intersect tangentially at a point on the boundary.
In this figure, and throughout this paper, we overlay the posets $W_{200}^\tree$ and $W_{200}^\br$ over polytopes.
The set of faces of any polytope has a poset structure, where $F<G$ if the containment $F \subset \ol G$ holds; our depiction of $W_{200}^\tree$ and $W_{200}^\br$ indicates that they are isomorphic to the face poset of the pentagon.



$W_\bn$ is intended to index the possible degenerations that a sequence of points in the moduli space $\ol{2\cM}_\bn$ can undergo.
In
\S\ref{sec:2ass}
we will define two models for $W_\bn$: $W_\bn^\tree$ and $W_\bn^\br$.
To approach and motivate these models, consider the following degenerations in $\ol{2\cM}_{200}$, corresponding to the bottom resp.\
bottom-left
resp.\ upper-right edges in the depiction of $\ol{2\cM}_{200}$ above:

\begin{figure}[H]
\centering
\def\svgwidth{0.4\columnwidth}
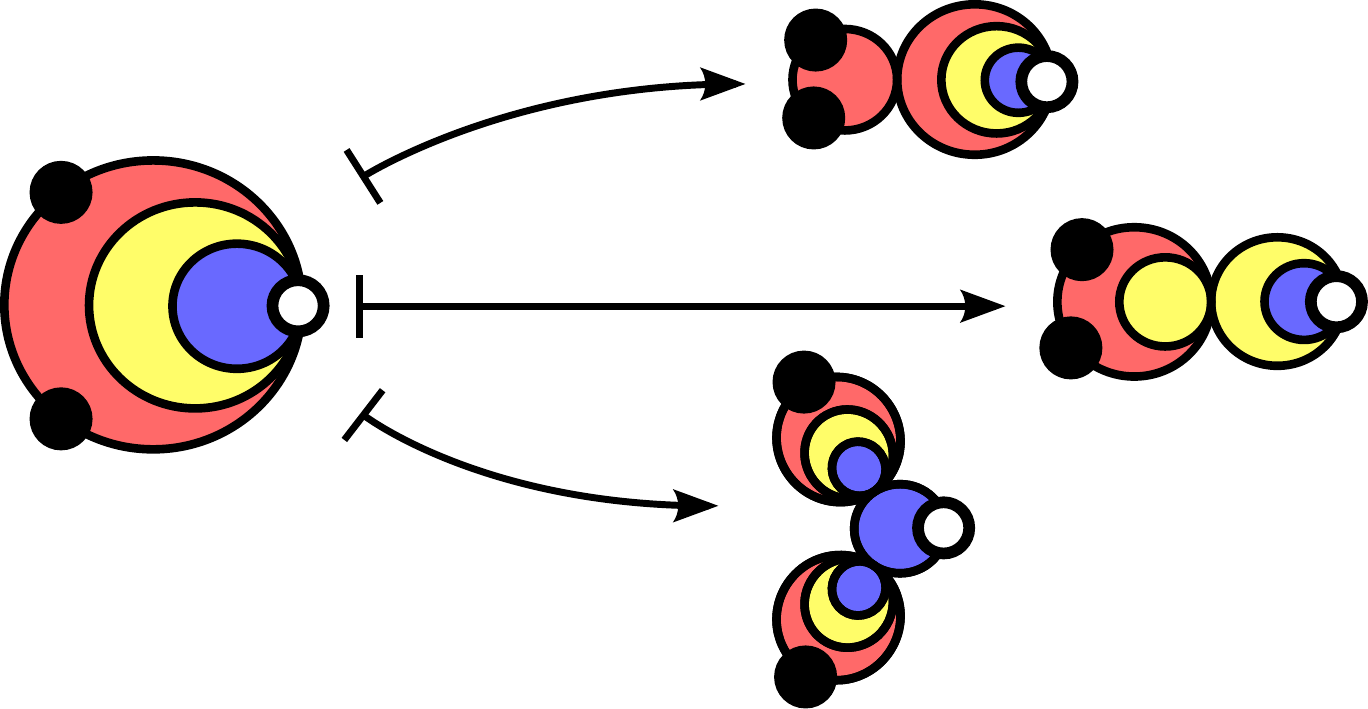
\end{figure}

\noindent
Degeneration 1 occurs when the two black points collide; degeneration 2 occurs when the larger interior circle expands and collides with the boundary circle, while the black points simultaneously collide; and degeneration 3 occurs when the two interior circles simultaneously expand and collide with the boundary circle.
To define the 2-associahedra, we must produce combinatorial data that track these degenerations.
We can do so in two ways:
\begin{itemize}
\item Represent each
disk
as a vertex in a tree, with solid edges corresponding to the seams (i.e., boundary circle or interior circles) appearing on that
disk.
We represent an attachment between two
disks
as a dashed edge; we also represent a marked point by a dashed edge.
This leads to the model $W_\bn^\tree$, and in this model the degenerations pictured above take the following form:

\begin{figure}[H]
\centering
\def\svgwidth{0.3\columnwidth}
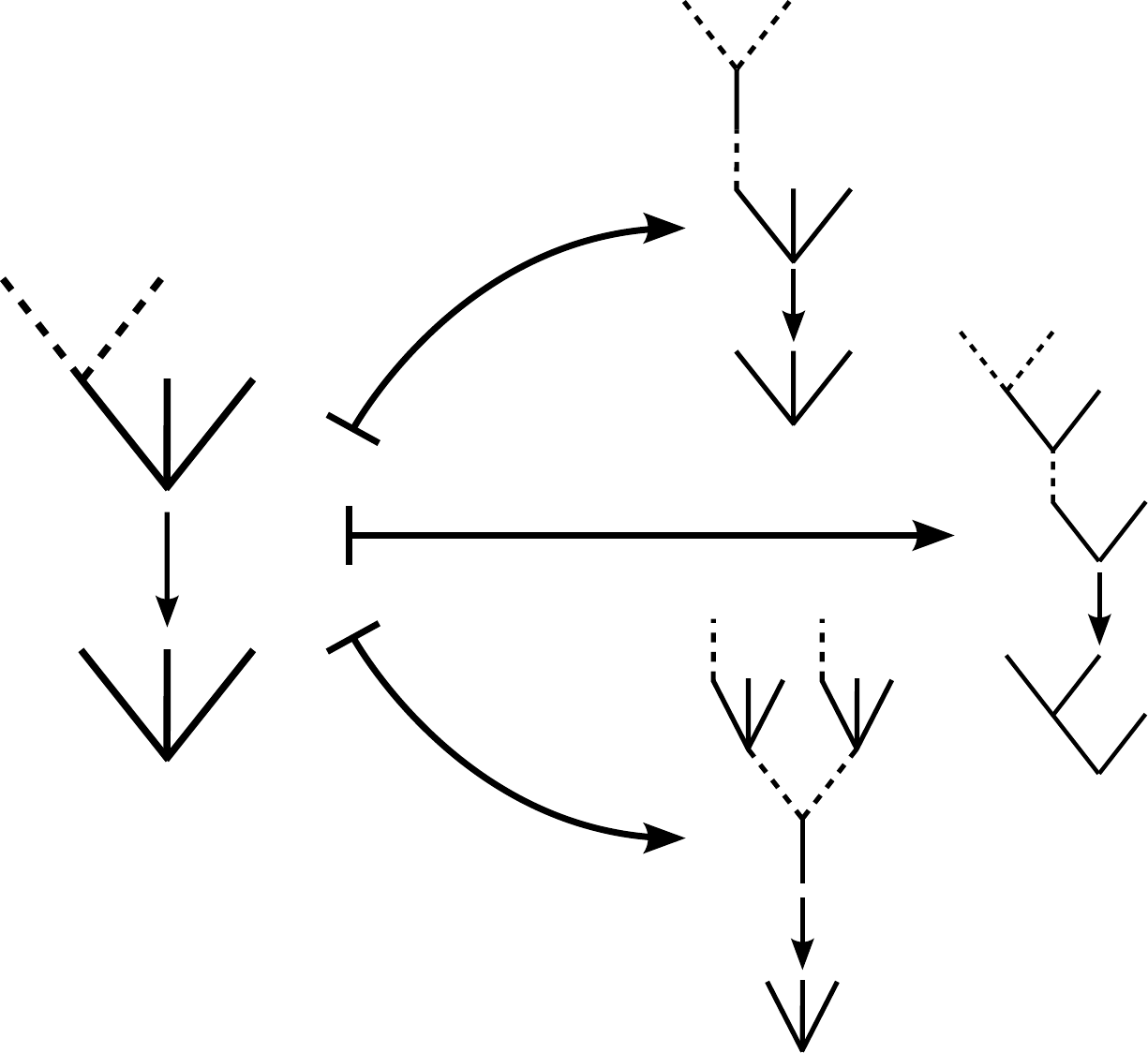
\end{figure}

\noindent The reader will observe that a single datum is not only a tree with solid and dashed edges, but also a smaller, solid, tree which receives a map from the larger tree.
The reason for this is that when
disks
bubble off, the same seam may appear in multiple
disks.
These seams must remain linked, so that the enlargement $\ol{2\cM}_\bn$ is a reasonable compactification of $2\cM_\bn$, and this linking is enforced by the smaller tree and the map it receives.

\item Represent the seams as a horizontal line of numbers; above each number, represent the points that appear on that seam as a vertical line of letters.
For any given
disk
$C$ in the bubble tree, form a subtree consisting of the
disks
that can only be reached from the main component by passing through $C$; the datum corresponding to $C$ is a grouping including those marked points appearing in this subtree.
Such a grouping is called a {\it 2-bracket}, and every 2-bracket comes with a ``width'', which indicates the seams that appear on the corresponding
disk.
This leads to the model $W_\bn^\br$, and in this model the above degenerations take the following form:

\begin{figure}[H]
\centering
\def\svgwidth{0.35\columnwidth}
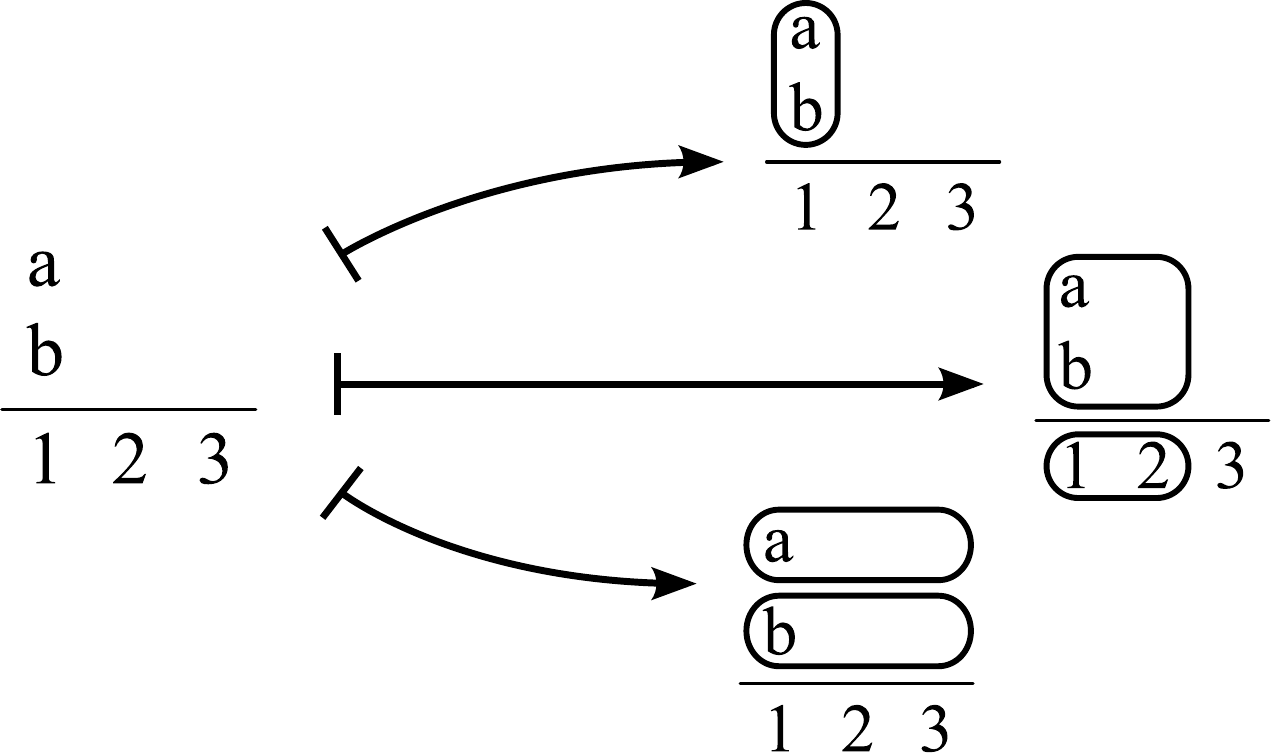
\end{figure}
\end{itemize}

\subsection{Plan}

The constructions in this paper are rather technical, and with the exception of \S\ref{sec:ass}, our definitions and results are completely new.
For this reason, we give a plan of the paper to orient the reader.

\medskip

\noindent {\bf\S\ref{sec:ass}:}
We recall two equivalent constructions,
called $K_r^\tree$ and $K_r^\br$,
of the associahedra $K_r$, along with several basic properties.
This material is not new, but these particular constructions of $K_r$ are needed for the constructions of the 2-associahedron $W_\bn$ in \S\ref{sec:2ass}.
In addition, the constructions of $K_r^\tree$ and $K_r^\br$ and the proofs of Prop.~\ref{prop:Kr_iso} and Prop.~\ref{prop:Kr_main} are analogous to the constructions of $W_\bn^\tree$ and $W_\bn^\br$ and the proofs of Thms.~\ref{thm:iso} and \ref{thm:main}, and so will serve as an introduction to \S\S\ref{sec:2ass}--\ref{sec:Wn_polytope}.
\begin{enumerate}
\item[\bf\S\ref{ss:Kr_construction}:]
In Def.~\ref{def:Krtree_set} and Def.-Lem.~\ref{deflem:Krtree_poset} we define a poset, $K_r^\tree$, consisting of rooted ribbon trees with $r$ leaves.
Then, in Def.~\ref{def:Krbr}, we define the poset $K_r^\br$, consisting of 1-bracketings of $r$ letters.
We prove that the posets $K_r^\tree$ and $K_r^\br$ are isomorphic in Prop.~\ref{prop:Kr_iso}, and define $K_r \coloneqq K_r^\tree = K_r^\br$.

\medskip

\item[\bf\S\ref{ss:Kr_polytope}:]
We establish two important properties of $K_r$, collected in the following result:

\medskip

\begin{samepage}
\noindent
{\bf Proposition \ref{prop:Kr_main}} (Key properties of $K_r$){\bf.}
\label{prop:Kr_main}
The posets $(K_r)$ satisfy the following properties:
\begin{itemize}
\item[] \textsc{(abstract polytope)} For $r \geq 2$, $\wh{K_r} \coloneqq K_r \cup \{F_{-1}\}$ is an abstract polytope of dimension $r-2$.

\item[] \textsc{(recursive)} For any $T \in K_r^\tree$, there is an inclusion of posets
\begin{align}
\gamma_T\colon \prod_{\alpha \in T_\inte} K_{\#\!\incom(\alpha)}^\tree \hra K_r^\tree,
\end{align}
which restricts to a poset isomorphism onto $\cl(T) = (F_{-1},T]$.
\end{itemize}
\end{samepage}

\medskip

\noindent We now give brief explanations of these properties.
\begin{enumerate}
\item[]
\textsc{(abstract polytope)}:
As explained in Def.~\ref{def:abstract_polytope}, an abstract polytope is a poset satisfying some of the characteristic combinatorial properties of a convex polytope.

\smallskip

\item[]
\textsc{(recursive)}:
This property reflects the fact that if $S$ is a stratum in $\ol\cM_\bn$ corresponding to a configuration with several disk-components, then the degenerations that can take place in $\ol S$ correspond to a choice of a degeneration (or lack thereof) in each of the disk-components.
We depict one of the maps $\gamma_T$ in the following figure (using $\ol\cM_\bn$ rather than $K_r^\tree$ or $K_r^\br$ for clarity):

\begin{figure}[H]
\label{fig:operad}
\centering
\def\svgwidth{0.6\columnwidth}
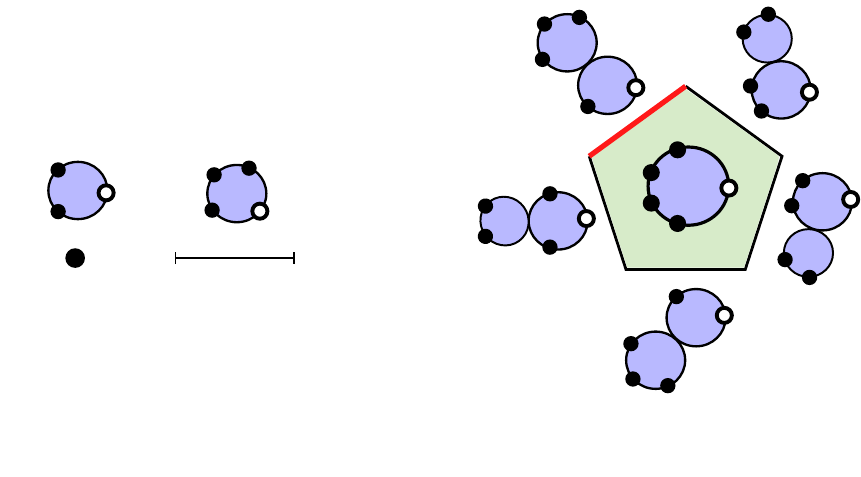
\end{figure}

\noindent
Here the upper-left edge in $K_4$ is decomposed as the product of $K_2$ and $K_3$, corresponding to the two disk-components appearing in the label on the upper-left edge.
These maps give $(K_r)$ the structure of an operad, which is of fundamental importance for applications to symplectic geometry.
\end{enumerate}
\end{enumerate}

\medskip

\noindent {\bf\S\ref{sec:2ass}:}
In this section, we construct the posets $W_\bn^\tree$ and $W_\bn^\br$ and show that they are isomorphic.
\begin{enumerate}
\item[\bf\S\ref{ss:Wntree_construction}, \bf\S\ref{ss:Wnbr_construction}:]
Here we define the posets $W_\bn^\tree$ and $W_\bn^\br$ (see Def.~\ref{def:Wn_tree} and Def.~\ref{def:Wn_br}), which were motivated in \S\ref{ss:motivation}.
We also show in Lemma~\ref{lem:WnKn} that $W_\bn^\tree$ specializes to the associahedra and the multiplihedra, which will be important for future applications to symplectic geometry.

\medskip

\item[\bf\S\ref{ss:Wn_iso}:]
This subsection is devoted to the proof of the following theorem:

\medskip
\noindent
{\bf Theorem \ref{thm:iso}} (Equivalence of the two models for $W_\bn$).
For any $r\geq 1$ and $\bn \in \bZ_{\geq0}^r\setminus\{\bzero\}$, $W_\bn^\tree$ and $W_\bn^\br$ are isomorphic posets.
\medskip

\noindent
With this theorem in hand, we define $W_\bn \coloneqq W_\bn^\tree = W_\bn^\br$.
We also define a forgetful map $\pi\colon W_\bn \to K_r$, which has a simple definition in either model for $W_\bn$: $\pi^\tree\colon W_\bn^\tree \to K_r^\tree$ sends a tree-pair $T_b \to T_s$ to the seam tree $T_s$, and $\pi^\br\colon W_\bn^\br \to K_r^\br$ sends a 2-bracketing to the underlying 1-bracketing of $1\:2\:\cdots\: r$.
In the following figure we depict $\pi^\br\colon W_{200}^\br \to K_3^\br$:

\begin{figure}[H]
\centering
\def\svgwidth{0.65\columnwidth}
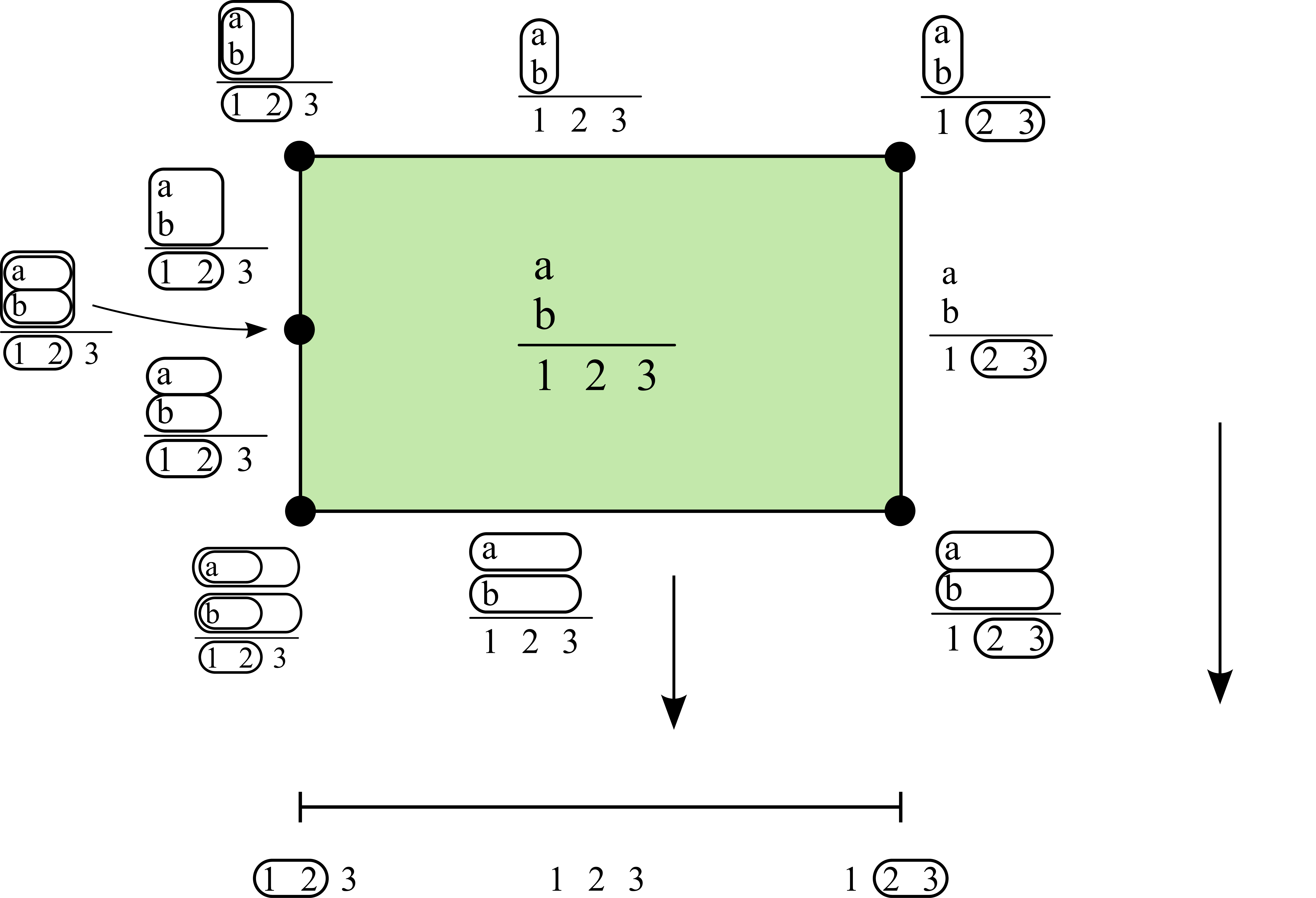
\end{figure}

\noindent
The forgetful map provides an important connection between the 2-associahedra and the associahedra.
Together with the \textsc{(recursive)} property described below, the forgetful map endows $(W_\bn)$ with the structure of a {\textit{relative 2-operad}}, a notion which the author plans to describe in a forthcoming paper.

\medskip

\noindent{\bf\S\ref{sec:Wn_polytope}:}
This section is devoted to the proof of several properties of $W_\bn$ which we collect in this paper's main theorem:

\medskip

\noindent
{\bf Theorem \ref{thm:main}} (Key properties of $W_\bn$).
For any $r \geq 1$ and $\bn \in \bZ^r_{\geq0}\setminus\{\bzero\}$, the 2-associahedron $W_\bn$ is a poset, the collection of which satisfies the following properties:
\begin{itemize}
\item[] \textsc{(abstract polytope)} For $\bn \neq (1)$, $\wh{W_\bn} \coloneqq W_\bn \cup \{F_{-1}\}$ is an abstract polytope of dimension $|\bn| + r - 3$.

\item[] \textsc{(forgetful)} $W_\bn$ is equipped with forgetful maps $\pi\colon W_\bn \to K_r$, which are surjective maps of posets.

\item[] \textsc{(recursive)} For any stable tree-pair $2T = T_b \sr{f}{\to} T_s \in W_\bn^\tree$, there is an inclusion of posets
\begin{align}
\Gamma_{2T} \colon \prod_{
{\alpha \in V_\comp^1(T_b),}
\atop
{\incom(\alpha)=(\beta)}
} W_{\#\!\incom(\beta)}^\tree
\times
\prod_{\rho \in V_\inte(T_s)} \prod^{K_{\#\!\incom(\rho)}}_{
{\alpha\in V_\comp^{\geq2}(T_b)\cap f^{-1}\{\rho\},}
\atop
{\incom(\alpha)=(\beta_1,\ldots,\beta_{\#\!\incom(\rho)})}
}
\hspace{-0.25in} W^\tree_{\#\!\incom(\beta_1),\ldots,\#\!\incom(\beta_{\#\!\incom(\alpha)})}
\hra W_\bn^\tree,
\end{align}
where the superscript on one of the product symbols indicates that it is a fiber product with respect to the maps described in \textsc{(forgetful)}.
This inclusion is a poset isomorphism onto $\cl(2T) = (F_{-1},2T]$.
\end{itemize}

\medskip

\noindent We now make remarks about two of these properties.

\begin{enumerate}
\item[] \textsc{(abstract polytope):}
It seems likely that $\ol{2\cM}_\bn$ can be realized as a convex polytope in a way that identifies its face lattice with $W_\bn$, but this is not important for the author's purposes.

\smallskip

\item[]
\textsc{(recursive)}:
This property is similar to the \textsc{(recursive)} property of $K_r$, but differs in that the closed strata of $W_\bn$ are \emph{fiber} products of lower-dimensional 2-associahedra.
We depict one of the maps $\Gamma_{2T}$ in the following figure (using $\ol{2\cM}_\bn$ rather than $W_\bn^\tree$ or $W_\bn^\br$ for clarity):

\begin{figure}[H]
\centering
\def\svgwidth{0.85\columnwidth}
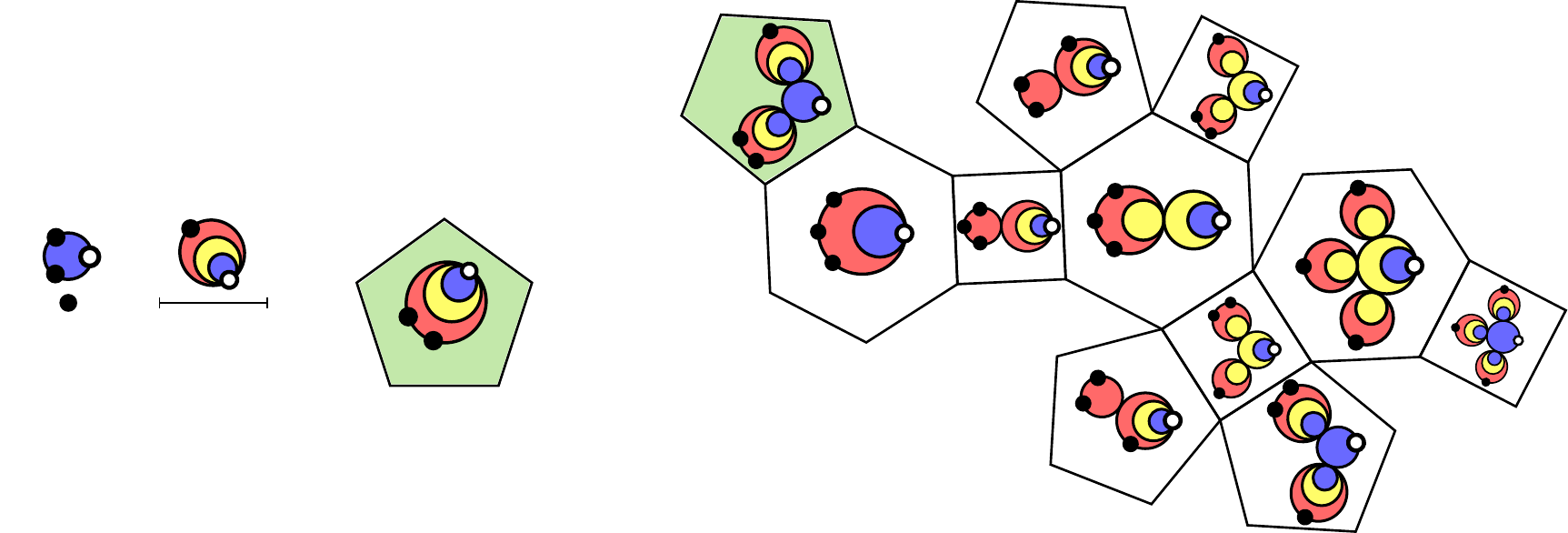
\end{figure}

\noindent
Here the fiber product $W_2 \times W_{100} \times_{K_3} W_{200}$ is included in $W_{300}$ as the green pentagon.
($W_{300}$ is a polyhedron; here we depict its net.)
$K_3$ is the 1-dimensional associahedron, which is an interval, and the maps from $W_{100}$ and $W_{200}$ to $K_3$ measure the width of the yellow strip.
\end{enumerate}

\medskip

\noindent{\bf\S\ref{app:ex}:}
In the appendix, we record all 2- and 3-dimensional 2-associahedra (except those which are isomorphic to associahedra).
One can immediately see from these examples that the 2-associahedra are not trivial extensions of the associahedra (for instance, products of associahedra).
Moreover, these examples are evidence that all 2-associahedra can be realized as convex polytopes.
\end{enumerate}

\subsection{Future directions}

The construction of the 2-associahedra suggests several potential future directions:
\begin{itemize}
\item In
\cite{b:realization}, the author
shows
that the 2-associahedra have modular realizations in terms of witch curves; these realizations are stratified topological spaces.
This fits into a larger project of constructing invariants of collections of Lagrangian correspondences, defined by counting pseudoholomorphic quilts whose domains are witch curves.
More progress toward this goal can be found in \cite{bw:compactness}, where a version of Gromov compactness for these quilts is proven using the analysis in \cite{b:sing}.

\item The associahedra have by now several realizations as convex polytopes, including a realization as the secondary polytopes of certain planar polygons.
It is natural to wonder whether the 2-associahedra can also be realized as convex polytopes, in particular as secondary or fiber polytopes (a possibility suggested by Gabriel Kerr).
Realizability as convex polytopes is not clearly relevant for applications to symplectic geometry, but a realization as secondary or fiber polytopes could suggest additional structure relevant to the study of Fukaya categories.

\item It is natural to ask whether there is a notion of ``$m$-associahedra'' for all $m \geq 1$.
The author believes that this concept should be relatively straightforward to define, but he does not have a need for this generalization and therefore has no plans to investigate this.

\item The author has conjectured that a cellular model for the little 2-disks operad can be built by gluing together copies of $W_{1\cdots 1}$.
If this is true, it suggests a way of defining a homotopy Gerstenhaber structure on symplectic cohomology involving only finitely many operations of any given arity.

\item Analogously to how an $A_\infty$-category is the same thing as a category over the operad of associahedra, the author plans to define a notion of {\it $A_\infty$-2-category} as a 2-category over the relative 2-operad of 2-associahedra.
\end{itemize}

\subsection{Glossary of notation, and conventions}
\label{ss:notation_and_conventions}

\begin{center}
\begin{tabular}{l|l|l}
notation & interpretation & page first defined \\
\hline
$[\alpha,\beta]$ & path from $\alpha$ to $\beta$ in a tree & p.\ \pageref{p:path} \\
$\alpha(\beta,\gamma,\delta)$ & single vertex in $[\beta,\gamma]\cap[\gamma,\delta]\cap[\delta,\beta]$ & p.\ \pageref{p:alpha_triple} \\
$\alpha(B)$ & vertex corresponding to 1-bracket $B$ & p.\ \pageref{p:alpha_B} \\
$\alpha(\btB)$ & vertex corresponding to 2-bracket $\btB$ & p.\ \pageref{p:alpha_2B} \\
$B$ & 1-bracket & p.\ \pageref{p:B} \\
$B(\alpha)$ & 1-bracket corresponding to vertex $\alpha$ & p.\ \pageref{p:Balpha} \\
$\sB$ & 1-bracketing & p.\ \pageref{p:sB} \\
$\btB = (B,(2B_i))$ & 2-bracket & p.\ \pageref{p:btB} \\
$\btB(\alpha)$ & 2-bracket corresponding to vertex $\alpha$ & p.\ \pageref{p:btBalpha} \\
$(\sB,\stB)$ & 2-bracketing & p.\ \pageref{p:sBstB} \\
$K_r$ & associahedron, $K_r=K_r^\br=K_r^\tree$ & p.\ \pageref{p:Kr} \\
$K_r^\br$ & poset of 1-bracketings of $r$ & p.\ \pageref{p:Krbr} \\
$K_r^\tree$ & poset of rooted ribbon trees with $r$ leaves & p.\ \pageref{deflem:Krtree_poset} \\
$\nu$ & isomorphism $K_r^\tree \to K_r^\br$ & p.\ \pageref{p:nu} \\
$2\nu$ & isomorphism $W_\bn^\tree \to W_\bn^\br$ & p.\ \pageref{p:2nu} \\
$\pi$ & forgetful map $W_\bn \to K_r$ & p.\ \pageref{p:forgetful} \\
$T_{\alpha\beta}$ & those vertices $\gamma$ with $[\alpha,\gamma]\ni\beta$ & p.\ \pageref{p:Talphabeta} \\
$2T = T_b \stackrel{f}{\to} T_s$ & stable tree-pair (with bubble tree $T_b$ and seam tree $T_s$) & p.\ \pageref{p:2T} \\
$W_\bn$ & 2-associahedron, $W_\bn = W_\bn^\br = W_\bn^\tree$ & p.\ \pageref{p:Wn} \\
$W_\bn^\br$ & poset of 2-bracketings of $\bn$ & p.\ \pageref{p:Wnbr} \\
$W_\bn^\tree$ & poset of stable tree-pairs of type $\bn$ & p.\ \pageref{p:Wntree}
\end{tabular}
\end{center}

\smallskip

The following conventions apply throughout this paper:

\begin{center}
\fbox{\parbox{1\columnwidth}{
Unless otherwise specified, $r$ will denote a positive integer, and $\bn$ will denote an element of $\bZ_{\geq0}^r\setminus\{\bzero\}$.
By $|\bn|$ we will denote the sum $\sum_{i=1}^r n_i$.
For $X_1,\ldots,X_\ell$ posets, each equipped with a map $f_i\colon X_i \to Y$ to another poset, we define and denote the \emph{fiber product of $X_1,\ldots, X_\ell$ over $Y$} like so:
\begin{align}
\prod_{1\leq i\leq\ell}^Y X_i
\coloneqq
\left\{\left.(x_1,\ldots,x_\ell,y) \in \prod_{1\leq i\leq\ell} X_i \times Y \:\right|\: \forall\: i: f_i(x_i) = y\right\}.
\end{align}
For any poset $X$ and $F, G \in X$, $[F,G]$ will denote the closed interval $\{H \in X \:|\: F \leq H \leq G\}$.
We will similarly denote half-open and open intervals.
If $X$ is a poset with a maximum resp.\ minimum element, we will denote these elements by $F_\top^X$ resp.\ $F_{-1}^X$ and omit the superscripts when the poset is obvious.}}
\end{center}

\noindent Note that when $\ell=0$, the ``empty fiber product over $Y$'' is $Y$ itself, and that when $\ell=1$ and $f_1\colon X_1 \to Y$ is surjective, this fiber product can be identified with $X_1$.

\subsection{Acknowledgments}

The ideas presented in this paper evolved over the course of several years, and the author is grateful to a number of people for their insight and support.
This project began in 2014, while the author was a graduate student at MIT under the supervision of Katrin Wehrheim; the author is grateful for Prof.\ Wehrheim's support throughout this project.
Conversations with Satyan Devadoss greatly helped in the development of the definition of $W_\bn$.
Stefan Forcey pointed out the notion of abstract polytope.
David Feldman, Nick Sheridan, and the anonymous referees
made suggestions that improved the exposition.
The author thanks Mohammed Abouzaid, Helmut Hofer, Jacob Lurie, Paul Seidel, and James Stasheff for encouragement.

The ideas in this paper were developed while the author was a graduate student at MIT, then a postdoctoral researcher at Northeastern University, and finally a member at the Institute for Advanced Study and a postdoctoral researcher at Princeton University.
The author was supported by an NSF Graduate Research Fellowship and an NSF Mathematical Sciences Postdoctoral Research Fellowship.

\section{The definition of \texorpdfstring{$K_r$}{Kr} and some basic properties}
\label{sec:ass}

In this section we define the associahedra $K_r$ and prove the analogues of Thms.~\ref{thm:iso} and \ref{thm:main}.
We will use $K_r$ later in this paper; besides, the ideas in this section will shed light on the techniques we will use to prove Thms.~\ref{thm:iso} and \ref{thm:main}. 

\subsection{Two constructions of \texorpdfstring{$K_r$}{Kr}: in terms of rooted ribbon trees, and in terms of 1-bracketings}
\label{ss:Kr_construction}

In this subsection we will define two posets $K_r^\tree$ and $K_r^\br$, then show that they are isomorphic.
We begin by recalling the definition of a tree.

\begin{definition}
A \emph{tree} is a finite set $T$ and a relation $E \subset T \times T$ satisfying these axioms:

\begin{itemize}
\item[] ({\sc Symmetry}) If $\alpha E \beta$, then $\beta E \alpha$.

\item[] ({\sc Antireflexivity}) If $\alpha E \beta$, then $\alpha \neq \beta$.

\item[] ({\sc Connectedness}) If $\alpha, \beta$ are distinct vertices, then there exist $\gamma_1, \ldots, \gamma_k \in T$ with $\gamma_1 = \alpha$, $\gamma_k = \beta$, and $\gamma_i E \gamma_{i+1}$ for every $i$.

\item[] ({\sc No cycles}) If $\gamma_1, \ldots, \gamma_k$ are vertices with $\gamma_i E \gamma_{i+1}$ and $\gamma_i \neq \gamma_{i+2}$ for all $i$, then $\gamma_1 \neq \gamma_k$.
\end{itemize}
\null\hfill$\triangle$
\end{definition}

\noindent We can now define the model $K^\tree_r$.
\begin{definition}
\label{def:Krtree_set}
A \emph{rooted ribbon tree} (RRT) is a tree $T$ with a choice of a root $\alpha_\root \in T$ and a cyclic ordering of the edges incident to each vertex; we orient such a tree toward the root.
We say that a vertex $\alpha$ of an RRT $T$ is \emph{interior} if the set $\incom(\alpha)$ of its incoming neighbors is nonempty, and we denote the set of interior vertices of $T$ by $T_\inte$.
An RRT $T$ is \emph{stable} if every interior vertex has at least 2 incoming edges.
We define $K^\tree_r$ to be the set of all isomorphism classes of stable rooted ribbon trees with $r$ leaves.

We denote the $i$-th leaf of an RRT $T$ by $\lambda_i^T$.
For any $\alpha, \beta \in T$, $T_{\alpha\beta}$\label{p:Talphabeta} denotes those vertices $\gamma$ such that the path $[\alpha,\gamma]$\label{p:path} from $\alpha$ to $\gamma$ passes through $\beta$.
We denote $T_\alpha \coloneqq T_{\alpha_\root\alpha}$.
\null\hfill$\triangle$
\end{definition}


Here is an illustration of the notation we have just introduced, in the case of a single RRT $T$:

\begin{figure}[H]
\centering
\def\svgwidth{0.8\columnwidth}
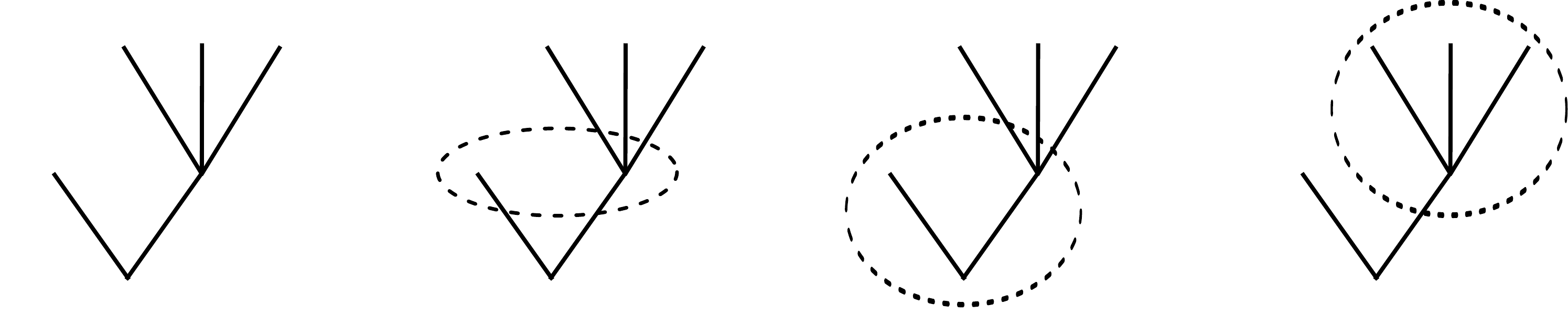
\label{fig:RRT_example}
\end{figure}


The following lemma provides a useful alternate characterization of RRTs.

\begin{lemma}
\label{lem:RRT_in}
An RRT is equivalent to the following data:
\begin{itemize}
\item a finite set $V$ of vertices with a distinguished element $\alpha_\root$;

\item for every $\alpha \in V$, a sequence $\incom(\alpha) \subset V$ such that:
\begin{itemize}
\item[(1)] for every $\alpha \in V$, $\incom(\alpha) \not\ni \alpha_\root$;

\item[(2)] for every $\alpha \neq \alpha_\root$ there exists a unique vertex $\beta$ with $\incom(\beta) \ni \alpha$; and

\item[(3)] if $\alpha_1, \ldots, \alpha_\ell$ is a sequence in $V$ with $\ell \geq 2$ and $\alpha_j \in \incom(\alpha_{j+1})$ for every $j$, then $\alpha_1 \neq \alpha_\ell$.
\end{itemize}
\end{itemize}
Moreover, the RRT is stable if and only if for every $\alpha \in V$, $\#\!\incom(\alpha) \neq 1$.
\end{lemma}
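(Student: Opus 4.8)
The plan is to establish the equivalence by constructing maps in both directions and checking they are mutually inverse. Starting from an RRT $T$ (a tree with root and cyclic orderings), I would define $V \coloneqq T$, keep $\alpha_\root$ as given, and for each $\alpha \in V$ let $\incom(\alpha)$ be the set of incoming neighbors of $\alpha$, \emph{ordered} by using the cyclic order of the edges at $\alpha$ together with the position of the outgoing edge (the one toward the root): deleting the outgoing edge from the cyclic order at $\alpha$ leaves a linear order on the remaining edges, hence on $\incom(\alpha)$. (At the root, there is no outgoing edge, so one must fix a convention — e.g.\ the cyclic order is already recorded as a linear order, or one adds a phantom outgoing edge; this is a small bookkeeping point to pin down.) First I would verify that properties (1)--(3) hold: (1) is immediate since the root has no outgoing edge, so it is never an incoming neighbor; (2) is the statement that every non-root vertex has a unique edge pointing toward the root, which follows from \textsc{Connectedness} and \textsc{No cycles}; (3) is a restatement of \textsc{No cycles} in terms of the oriented adjacency relation.

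Conversely, given data $(V, \alpha_\root, (\incom(\alpha))_{\alpha \in V})$ satisfying (1)--(3), I would define a relation $E$ on $V$ by $\alpha E \beta \iff \alpha \in \incom(\beta)$ or $\beta \in \incom(\alpha)$, and check the four tree axioms. \textsc{Symmetry} and \textsc{Antireflexivity} are built in (the latter using (3) with $\ell = 2$, or just noting $\incom(\alpha) \not\ni \alpha$ follows from (3)). For \textsc{Connectedness}: using (2) repeatedly, every vertex $\alpha \neq \alpha_\root$ has a well-defined ``parent,'' and iterating the parent map must reach $\alpha_\root$ — otherwise (2) and finiteness force a cycle among non-root vertices, contradicting (3); so every vertex is connected to $\alpha_\root$, hence any two vertices are connected. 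For \textsc{No cycles}: a cycle in $E$ would, after orienting edges toward decreasing distance-to-root, produce a directed cycle in the $\incom$-relation, which (3) forbids; the one subtlety is a cycle that oscillates in orientation, but such a cycle must contain a vertex that is a local maximum for distance-to-root with two distinct children, and tracing the two downward paths from it to their first common vertex again yields a directed $\incom$-cycle. The root, cyclic orderings, and leaf labels transfer directly: a cyclic order at $\alpha$ is recovered from the linear order on $\incom(\alpha)$ by re-inserting the outgoing edge.

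Then I would check the two constructions are inverse to each other — essentially formal once the conventions are fixed — and confirm that isomorphisms of the two kinds of data correspond, so the bijection descends to isomorphism classes. Finally, for the stability clause: an interior vertex of $T$ is exactly an $\alpha$ with $\incom(\alpha) \neq \emptyset$, i.e.\ $\#\!\incom(\alpha) \geq 1$; stability demands $\#\!\incom(\alpha) \geq 2$ for such $\alpha$, which is precisely the condition ``$\#\!\incom(\alpha) \neq 1$ for every $\alpha \in V$'' (vertices with $\#\!\incom(\alpha) = 0$ are leaves and impose no constraint).

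\textbf{Main obstacle.} The only real content is the \textsc{No cycles}/\textsc{Connectedness} verification — specifically, arguing that the acyclicity hypothesis (3), which only directly forbids \emph{directed} $\incom$-cycles, rules out \emph{all} cycles in the underlying undirected graph, and that the parent-iteration terminates at the root. The argument via ``distance to root'' and ``first common ancestor'' is standard, but one must be careful to set it up before knowing $E$ defines a tree (so ``distance'' must be justified using only (1)--(3) and finiteness). The rest is routine bookkeeping about how a cyclic order at a vertex with a marked outgoing edge is the same as a linear order on the other edges.
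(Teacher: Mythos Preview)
Your overall architecture matches the paper's exactly: construct maps in both directions, verify (1)--(3) from the tree axioms and vice versa, and observe they are inverse. The forward direction, \textsc{Connectedness} (iterate the parent map, use finiteness and (3) to terminate at the root), and the stability clause are all handled essentially as in the paper.

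The one place your sketch wobbles is precisely the part you flag as the main obstacle: the \textsc{No cycles} verification. Your description there is muddled. A vertex in the cycle that is a \emph{local maximum for distance-to-root} has both cycle-neighbors \emph{closer} to the root, i.e.\ both are its \emph{parents} in the $\incom$-relation --- not its children. Property (2) then immediately forces those two neighbors to coincide, contradicting $\alpha_{j-1}\neq\alpha_{j+1}$. That is the whole argument; the further claim about ``tracing the two downward paths \ldots\ yields a directed $\incom$-cycle'' is both unclear (two root-ward paths meeting at a common vertex do not concatenate to a directed cycle) and unnecessary once you have the parent-collision contradiction. You also correctly worry that ``distance to root'' is circular before the tree structure is established; it can be salvaged (define depth via the parent chain, already shown to terminate), but the paper sidesteps this entirely.

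The paper's argument is cleaner and worth internalizing: it never invokes distance. Writing $\alpha\prec\beta$ for $\alpha\in\incom(\beta)$, it observes directly from (2) that a ``valley'' $\alpha_{j}\succ\alpha_{j+1}\prec\alpha_{j+2}$ forces $\alpha_j=\alpha_{j+2}$, so a path with $\alpha_j\neq\alpha_{j+2}$ has no valleys. Hence the path is either monotone (and (3) applies) or has a single peak $\alpha_1\prec\cdots\prec\alpha_{k}\succ\cdots\succ\alpha_\ell$; in the latter case, if $\alpha_1=\alpha_\ell$ then uniqueness of parents (2) forces the two ascending chains to agree step by step, producing $\alpha_{k-1}=\alpha_{k+1}$, a contradiction.
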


\begin{proof}
\noindent Throughout this proof, we will abbreviate $\alpha \in \incom(\beta)$ by $\alpha \prec \beta$.

\medskip

\noindent {\it{Step 1: Given an RRT, we show that its vertices together with their incoming neighbors satisfy (1--3).}}

\medskip

\noindent Fix an RRT $T$.
Its root $\alpha_\root$ is not an incoming neighbor of any vertex, so (1) holds.
It is also clear that any $\alpha \neq \alpha_\root$ is an incoming neighbor of exactly one vertex, since otherwise the {(\sc{no cycles})} property would not hold; therefore (2) holds.
Finally, if $\alpha_1 \prec \ldots \prec \alpha_\ell$ is a sequence of vertices with $\ell \geq 2$, then $\dist(\alpha_j,\alpha_\root) = \dist(\alpha_{j+1},\alpha_\root) + 1$ for every $j$, so $\alpha_1 \neq \alpha_\ell$.

\medskip

\noindent {\it {Step 2: Given a finite set $V \ni \alpha_\root$ and sequences $\incom(\alpha)$ for $\alpha \in V$ that satisfy (1--3), we produce an RRT having $V$ as its vertices and $\incom(\alpha)$ as the incoming neighbors of $\alpha$, ordered according to the order of $\incom(\alpha)$.}}

\medskip

\noindent Given this data, define a tree $T$ by
\begin{align}
V(T) \coloneqq V, \quad \alpha E \beta \iff \alpha \prec \beta \text{ or } \beta \prec \alpha.
\end{align}
This relation is clearly symmetric, and its antireflexivity follows from the $\ell=2$ case of (3).

To prove {\sc{(connectedness)}}, we will show that every vertex is connected to $\alpha_\root$.
Fix $\alpha \in V \setminus \{\alpha_\root\}$, and define a path like so: set $\alpha_1 \coloneqq \alpha$, and for $j \geq 1$ with $\alpha_j \neq \alpha_\root$, set $\alpha_{j+1}$ to be the unique vertex with $\alpha_j \prec \alpha_{j+1}$.
By (3), this path is nonoverlapping, so since $V$ is finite, this path will eventually terminate at $\alpha_\root$.

To prove {\sc (no cycles)}, consider a path $\alpha_1, \ldots, \alpha_\ell$ with $\ell \geq 3$ and $\alpha_j \neq \alpha_{j+2}$ for every $j$; we must show $\alpha_1 \neq \alpha_\ell$.
If there exists $j$ with $\alpha_j \succ \alpha_{j+1} \prec \alpha_{j+2}$, then (2) implies $\alpha_j = \alpha_{j+2}$, in contradiction to our assumption.
Therefore we must either have (a) $\alpha_1 \prec \cdots \prec \alpha_\ell$, (b) $\alpha_1 \succ \cdots \succ \alpha_\ell$, or (c) $\alpha_1 \prec \cdots \prec \alpha_j \prec \alpha_{j+1} \succ \alpha_{j+2} \succ \cdots \succ \alpha_\ell$.
In cases (a) and (b), (3) implies $\alpha_1 \neq \alpha_\ell$.
In case (c), suppose $\alpha_1 = \alpha_\ell$.
(2) implies that the paths $(\beta_j)$ resp.\ $(\gamma_j)$ defined by $\beta_1 \coloneqq \alpha_1$ and $\beta_{j+1} \succ \beta_j$ resp.\ $\gamma_1 \coloneqq \alpha_\ell$ and $\gamma_{j+1} \succ \gamma_j$ coincide, hence $\alpha_j = \alpha_{j+2}$, a contradiction.
In all cases we have shown $\alpha_1 \neq \alpha_\ell$, hence {\sc (no cycles)} holds.

Finally, we upgrade $V(T)$ to an RRT.
Define its root to be $\alpha_\root \in V$.
With this choice of root, the incoming neighbors of $\alpha$ are exactly the elements of $\in(\alpha)$; order these vertices according to the order on $\in(\alpha)$.

\medskip

\noindent Clearly Steps 1 and 2 are inverse to one another.
The stability criterion is also obvious.
\end{proof}

Now we will define a ``dimension'' function $d$ on $K_r^\tree$.
As described in \S\ref{sec:intro}, $K_r^\tree$ indexes the strata of a topological space $\ol\cM_r$; $d$ assigns to an element of $K_r^\tree$ the dimension of the corresponding stratum of $\ol\cM_r$.

\begin{definition}
\label{def:RRT_dim}
For $T$ a stable RRT in $K_r^\tree$, we define its \emph{dimension} $d(T) \in \bZ_{\geq0}$ like so:
\begin{align}
\label{eq:RRT_dim}
d(T) \coloneqq r - \#\!T_\inte - 1.
\end{align}
\null\hfill$\triangle$
\end{definition}

\begin{remark}
Note that $K_1^\tree$ has a single element, the RRT $\bullet$ with a single vertex and no edges; its dimension is zero.
\null\hfill$\triangle$
\end{remark}

\begin{lemma}
\label{lem:Kr_dim_props}
Fix an RRT $T \in K_r^\tree$.
\begin{itemize}
\item[(a)] The dimension can be re-expressed using this formula:
\begin{equation} \label{eq:T_dim_reform}
	d(T) = \sum_{\alpha \in T_\inte} (\#\!\incom(\alpha)-2).
\end{equation}

\item[(b)] If $r \geq 2$, the dimension satisfies the inequality $0 \leq d(T) \leq r - 2$.
\end{itemize}
\end{lemma}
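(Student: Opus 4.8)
The plan is to prove both parts by manipulating the two formulas for $d(T)$ and exploiting elementary facts about stable RRTs with $r$ leaves.

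For part (a), the identity amounts to a global bookkeeping of edges. I would count the edge set $E(T)$ in two ways. On one hand, since $T$ is a tree with vertex set $T = T_\inte \sqcup \{\text{leaves}\}$, it has $\#T - 1$ edges; the number of leaves is $r$ (the $r$-leaf condition), so $\#E(T) = \#T_\inte + r - 1$. On the other hand, every edge is the unique edge joining some vertex to the parent of its... more cleanly: every non-root vertex $\alpha$ has exactly one outgoing edge, by Lem.~\ref{lem:RRT_in}(2), so $\#E(T) = \#T - 1$ again, but now regroup the incoming edges: $\sum_{\alpha \in T} \#\!\incom(\alpha) = \#E(T)$ as well, since each edge is incoming to exactly one vertex. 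Leaves contribute $0$ to this sum, so $\sum_{\alpha \in T_\inte} \#\!\incom(\alpha) = \#E(T) = \#T_\inte + r - 1$. Therefore
\begin{align}
\sum_{\alpha \in T_\inte}(\#\!\incom(\alpha) - 2) = (\#T_\inte + r - 1) - 2\#T_\inte = r - \#T_\inte - 1 = d(T),
\end{align}
which is \eqref{eq:T_dim_reform}.

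For part (b), I would use \eqref{eq:T_dim_reform} together with stability. Stability (Def.~\ref{def:Krtree_set}, or the criterion in Lem.~\ref{lem:RRT_in}) says $\#\!\incom(\alpha) \geq 2$ for every $\alpha \in T_\inte$, so each summand in \eqref{eq:T_dim_reform} is $\geq 0$, giving $d(T) \geq 0$. For the upper bound, note that when $r \geq 2$ the tree has at least one interior vertex (a one-vertex tree would have $r = 1$), so $\#T_\inte \geq 1$, and then $d(T) = r - \#T_\inte - 1 \leq r - 2$ directly from the original formula \eqref{eq:RRT_dim}.

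I do not anticipate a genuine obstacle here; the only point requiring a little care is the double-counting argument in part (a), specifically making sure the leaf/interior-vertex partition is used correctly and that "$r$ leaves" is invoked at the right moment. One should also confirm the $r = 1$ edge case is consistent (there $T_\inte = \emptyset$, both sides of (a) vanish, and (b) is vacuous), which the preceding remark already records.
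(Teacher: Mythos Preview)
Your proposal is correct and follows essentially the same approach as the paper. For part (a), the paper establishes the identity $\sum_{\alpha \in T_\inte} \#\!\incom(\alpha) = \#T_\inte + r - 1$ by observing that the left side counts the non-root vertices (each being the incoming neighbor of exactly one vertex), which is the same edge-counting argument you give, just phrased slightly differently; for part (b), the paper likewise uses stability with \eqref{eq:T_dim_reform} for the lower bound and \eqref{eq:RRT_dim} for the upper bound, though it leaves the observation $\#T_\inte \geq 1$ implicit where you make it explicit.
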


\begin{proof}
\begin{itemize}
	\item[(a)] \eqref{eq:T_dim_reform} is the result of substituting into \eqref{eq:RRT_dim} the following identity:
	\begin{align}
	\label{eq:T_valence_sum}
	\sum_{\alpha \in T_\inte} \#\!\incom(\alpha) = \#\!T_\inte + r - 1.
	\end{align}
	This follows by noting that $\sum_{\alpha \in T_\inte} \#\!\incom(\alpha)$ counts the vertices in $T$ that are an incoming neighbor of another vertex in $T$.  This set is the complement of the root of $T$, hence has cardinality $\#\!T_\inte + r - 1$.
	
	\item[(b)]	
	The inequality $d(T) \geq 0$ follows from \eqref{eq:T_dim_reform} and the stability hypothesis on $T$; the inequality $d(T) \leq r-2$ follows immediately from \eqref{eq:RRT_dim}.
\end{itemize}
\end{proof}

We now define moves that can be performed on stable RRTs.
As we will show, each move decreases the dimension $d$ by one
--- and in fact, the moves that can be performed on $T$ correspond to the codimension-1 degenerations that can occur in $\ol\cM_r$ starting from the stratum corresponding to $T$.
Given a stable RRT $T$, here is the general description of a legal move that can be performed on $T$: choose $\alpha \in T_\inte$ and a consecutive subset $(\gamma_{p+1},\ldots,\gamma_{p+l})\subset(\gamma_1,\ldots,\gamma_k)=\incom(\alpha)$ where $l$ satisfies $2 \leq l < k$ (necessary to preserve stability).
The corresponding move consists of modifying the incoming edges of $\alpha$ like so:

\begin{figure}[H]
\centering
\def\svgwidth{0.4\columnwidth}
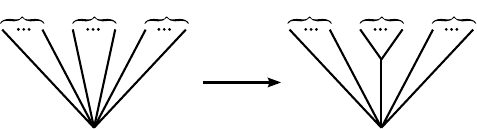
\label{fig:T_moves}
\end{figure}


\begin{example}
In the following figure, we illustrate the notion of a move on an RRT.
On the left resp.\ right we show all the RRTs with three resp.\ four leaves, and indicate all moves amongst these RRTs by arrows, each one corresponding to a single move.
As we will shortly see, these moves equip the set of RRTs with a fixed number of leaves with the structure of a poset, which in fact is an abstract polytope; for this reason we suggestively overlay the RRTs over polytopes.

\begin{figure}[H]
\centering
\def\svgwidth{0.65\columnwidth}
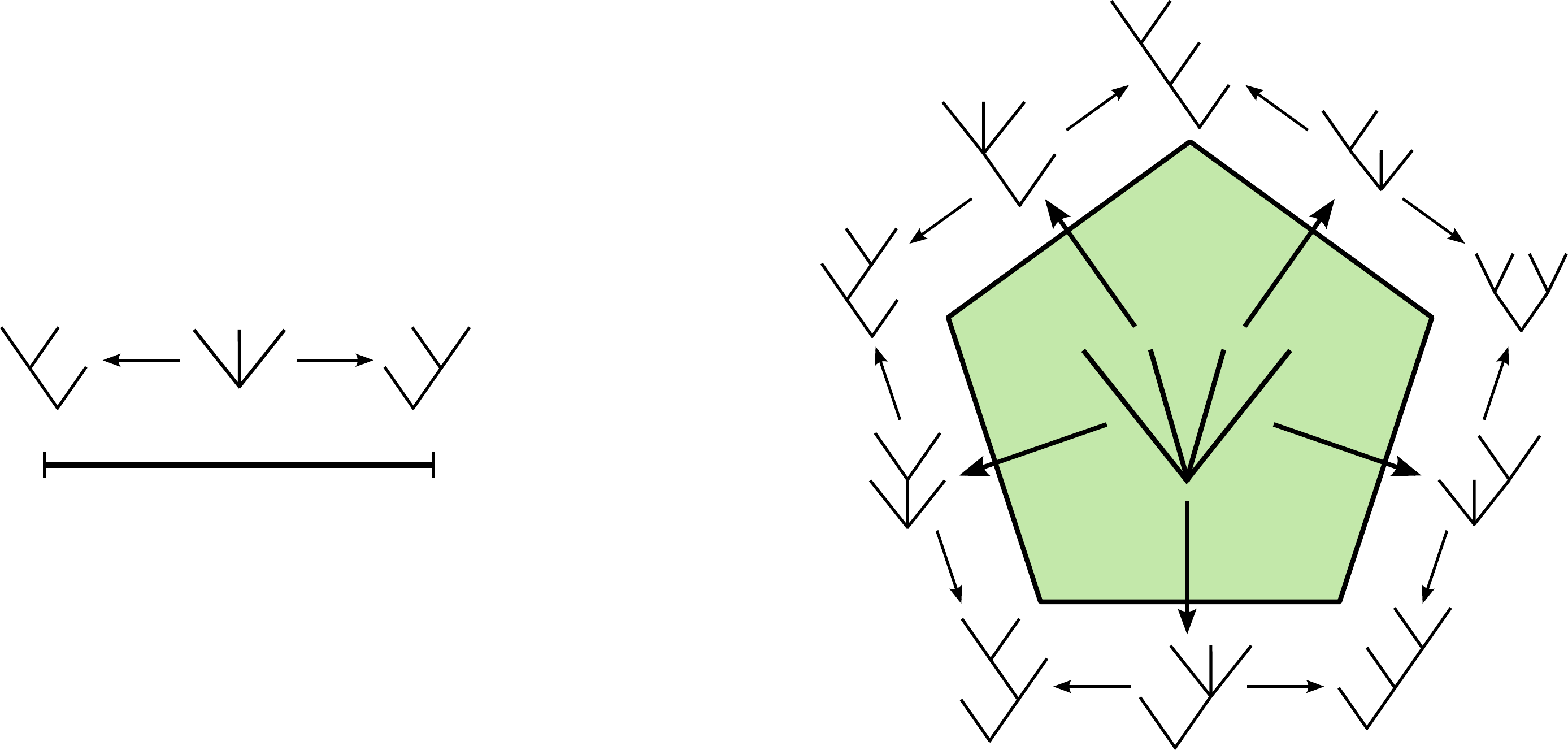
\label{fig:RRT_moves}
\end{figure}
\null\hfill$\triangle$
\end{example}


\begin{lemma}
\label{lem:Kr_move_dim}
If $T$ is a stable RRT and $T'$ is the result of performing a move on $T$, then $d(T') = d(T)-1$.
\end{lemma}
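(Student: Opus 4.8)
The plan is to compute directly with the dimension formula of Definition~\ref{def:RRT_dim}, namely $d(T) = r - \#\!T_\inte - 1$. A move never alters the leaves of the tree—it only regroups the incoming edges at a single interior vertex $\alpha$—so the leaf count $r$ is the same for $T$ and $T'$. Hence it suffices to show that $\#\!T'_\inte = \#\!T_\inte + 1$.

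To see this I would unwind the definition of a move. Writing $\incom_T(\alpha) = (\gamma_1,\ldots,\gamma_k)$ and fixing the consecutive block $(\gamma_{p+1},\ldots,\gamma_{p+l})$ with $2 \leq l < k$, the move introduces one new vertex $\beta$ with $\incom_{T'}(\beta) = (\gamma_{p+1},\ldots,\gamma_{p+l})$ and sets $\incom_{T'}(\alpha) = (\gamma_1,\ldots,\gamma_p,\beta,\gamma_{p+l+1},\ldots,\gamma_k)$, leaving every other vertex's incoming sequence untouched. Using Lemma~\ref{lem:RRT_in}, one checks that this data still satisfies (1)--(3), so $T'$ is again an RRT. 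Every vertex of $T$ other than $\alpha$ has the same incoming set in $T'$, hence lies in $T'_\inte$ iff it lay in $T_\inte$; the vertex $\alpha$ has $\#\!\incom_{T'}(\alpha) = k-l+1 \geq 2$ since $l<k$, so $\alpha$ remains interior; and the new vertex $\beta$ has $\#\!\incom_{T'}(\beta) = l \geq 2$, so $\beta$ is interior. Thus the constraint $2 \leq l < k$ simultaneously guarantees that $T'$ is stable and that $T'_\inte = T_\inte \sqcup \{\beta\}$, giving $\#\!T'_\inte = \#\!T_\inte + 1$.

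Substituting into the dimension formula yields $d(T') = r - (\#\!T_\inte + 1) - 1 = d(T)-1$, as claimed. As a sanity check, one can redo the computation with the reformulated dimension $d(T) = \sum_{\alpha\in T_\inte}(\#\!\incom(\alpha)-2)$ of Lemma~\ref{lem:Kr_dim_props}(a): the contribution of $\alpha$ drops by $l-1$ while $\beta$ contributes $l-2$, for a net change of $-1$. There is no serious obstacle here; the only point requiring a little care is confirming that the output of a move is once again a \emph{stable} RRT, which is precisely what the hypothesis $2 \leq l < k$ is there to ensure.
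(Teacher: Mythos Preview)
Your proof is correct and follows the same approach as the paper's: both use the formula $d(T) = r - \#T_\inte - 1$, observe that a move leaves the leaf count unchanged, and note that exactly one new interior vertex is created. Your version simply fills in more detail (verifying stability and the interior-vertex count explicitly via the constraint $2 \leq l < k$, plus the sanity check with the reformulated dimension), whereas the paper dispatches this in two sentences.
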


\begin{proof}
If $T$ has $r$ leaves, then we have $d(T) = r - \#\!T_\inte - 1$.
When we perform a move on $T$, the number of leaves remains the same and one new interior node is created, so $d$ decreases by one.
\end{proof}

\begin{deflem}
\label{deflem:Krtree_poset}
Define $K_r^\tree$ as a poset by declaring $T' < T$ if there is a finite sequence of moves that transforms $T$ into $T'$.
\end{deflem}

\begin{proof}
We must check that this defines a partial order on $K_r$.
The reflexivity property follows from Lemma~\ref{lem:Kr_move_dim}.
Transitivity is immediate.
\end{proof}

Recall that a \textbf{tree homomorphism} is a map $f\colon T \to \wt T$ such that $f^{-1}\{\wt\alpha\}$ is a tree for every $\wt\alpha \in \wt T$, and $\alpha E\beta$ implies that either $f(\alpha)=f(\beta)$ or $	f(\alpha)\wt Ef(\beta)$.
An \textbf{RRT homomorphism} is a tree homomorphism $f\colon T \to \wt T$ that sends leaves to leaves, root to root, interior vertices to interior vertices, and respects the cyclic orderings of edges and the orientation in
the following ways:
\begin{itemize}
\item Suppose that $\wt\beta_1, \wt\beta_2$ lie in $\incom(\wt\alpha)$ and satisfy $\wt\beta_1 < \wt\beta_2$.
Suppose that $\beta_1, \beta_2$ satisfy $f(\beta_1) = \wt\beta_1$, $f(\beta_2) = \wt\beta_2$.
Choose $\gamma \in T$ to be the first intersection of the path from $\beta_1$ to $\alpha_\root^T$ and the path from $\beta_2$ to $\alpha_\root^T$, and define $\delta_1, \delta_2$ to be the incoming neighbors of $\gamma$ that these two paths pass through.
Then the inequality $\delta_1 < \delta_2$ holds.

\item Suppose $\alpha$ lies in $T$ and $\beta$ lies in $\incom(\alpha)$.
Then either $f(\beta) = f(\alpha)$ or $f(\beta) \in \incom(\alpha)$.
\end{itemize}
If $\wt T$ is the result of performing a sequence of moves on a stable RRT $T$, then there is a surjective RRT homomorphism $\wt T \to T$ gotten by contracting the edges added to $T$ to form $\wt T$.
As the next lemma shows, all surjective homomorphisms of stable RRTs can be obtained in this fashion.

\begin{lemma}
\label{lem:RRT_ll}
If $f\colon \wt T \to T$ is a surjective homomorphism of stable RRTs, then $\wt T$ can be obtained from $T$ by applying a finite sequence of moves, and $f$ is the map that contracts the new edges that were added to $T$ to form $\wt T$.
\end{lemma}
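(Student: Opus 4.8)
The plan is to analyze the structure of a surjective RRT homomorphism $f\colon \wt T \to T$ by examining its fibers, and to reconstruct $\wt T$ from $T$ one move at a time by reversing edge contractions. The starting observation is that, since $f$ is an RRT homomorphism, each fiber $f^{-1}\{\alpha\}$ is a subtree of $\wt T$, and by the two compatibility axioms (the one on incoming neighbors, the one relating $\incom(\alpha)$ to its image), this subtree is in fact a rooted ribbon subtree whose root maps to $\alpha$. The first step is to set up this local picture carefully: I would fix $\alpha \in T_\inte$, let $F_\alpha = f^{-1}\{\alpha\}$, and show that $F_\alpha$ has a unique vertex $\wt\alpha$ with $f(\incom_{\wt T}(\wt\alpha)) \not\subset \{\alpha\}$ plays the role of ``the vertex hit from above,'' and that contracting all internal edges of every fiber $F_\alpha$ recovers $T$ together with the map $f$. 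This reduces the lemma to the statement that each $F_\alpha$, as a stable RRT rel its boundary data, is obtained from the corolla on $\incom(\alpha)$ by a sequence of moves — i.e., to the $r$-leaf case of the lemma applied to each fiber, which one could either invoke inductively (on $\#\wt T - \#T$) or prove directly.

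For the inductive step, I would pick an edge of $\wt T$ that $f$ contracts — equivalently, an internal edge of some fiber $F_\alpha$ — chosen so that contracting it yields a \emph{stable} RRT $\wt T'$ (such an edge exists: take one incident to a vertex of $F_\alpha$ all of whose other incident edges in $F_\alpha$ are leaves-of-the-fiber, so that contracting it does not create a valence-1 interior vertex; stability of $\wt T$ and $T$ guarantees this). Then $f$ factors as $\wt T \to \wt T' \xrightarrow{f'} T$ with $f'$ still a surjective RRT homomorphism contracting strictly fewer edges, and $\wt T$ is obtained from $\wt T'$ by a single move — namely the move at the image vertex $\alpha$ that splits off the consecutive block of incoming edges corresponding to the subtree on the far side of the contracted edge. (One must check this block is \emph{consecutive} in the cyclic order; this is exactly what the ribbon-compatibility axiom on incoming neighbors forces, since $f$ respects the cyclic orderings.) By induction $\wt T'$ is obtained from $T$ by a sequence of moves and $f'$ contracts the added edges; prepending the single move $\wt T' \rightsquigarrow \wt T$ gives the claim.

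The base case $\wt T = T$ is trivial ($f = \id$, no moves). The only thing left is the ``moreover'' clause that $f$ is \emph{the} contraction map: this follows because at each stage the move we perform is inverse to the edge-contraction that $f'$ (hence $f$) already does, so by construction $f$ contracts precisely the edges added along the way.

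\medskip
\noindent\textbf{Main obstacle.} The technical heart is the consecutiveness claim: when we reverse a contraction at a vertex $\alpha$, the incoming edges of $\alpha$ in $\wt T$ that get ``absorbed'' into the new vertex must form a cyclically consecutive block of $\incom_T(\alpha)$, since otherwise the reverse operation is not a legal move. Verifying this requires unwinding the first RRT-homomorphism axiom (the one about $\gamma = $ first common vertex on the paths to the root and the induced inequality $\delta_1 < \delta_2$) and checking it is equivalent to saying $f$ restricted to each fiber is a planar/ribbon embedding, so that distinct fibers attach to $\alpha$ along disjoint consecutive arcs. Everything else — existence of a contractible edge preserving stability, the factorization, the induction bookkeeping — is routine once this planarity bookkeeping is pinned down.
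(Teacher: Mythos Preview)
Your approach is correct and is essentially the same induction on $\#\wt T - \#T$ that the paper uses: pick an edge contracted by $f$, contract it to get an intermediate $\wt T'$, factor $f$ through $\wt T'$, and observe that $\wt T$ is one move away from $\wt T'$.

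However, you over-engineer two points that the paper dispatches silently. First, \emph{any} edge $\wt\alpha\wt\beta$ with $f(\wt\alpha)=f(\wt\beta)$ works: since $f$ sends interior vertices to interior vertices and leaves to leaves, both endpoints are interior in $\wt T$, hence each has $\geq 2$ incoming edges, so the merged vertex in $\wt T'$ has $\geq 3$ incoming edges and $\wt T'$ is automatically stable. No special choice of edge is needed. Second, your ``main obstacle'' --- consecutiveness of the block $\incom(\wt\beta)$ inside $\incom_{\wt T'}(\text{merged vertex})$ --- is not an obstacle at all: it holds \emph{by definition} of the ribbon structure on the contracted tree, since when you contract $\wt\alpha\wt\beta$ the incoming edges of $\wt\beta$ slot in exactly where $\wt\beta$ sat in $\incom(\wt\alpha)$. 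You do not need to invoke the order-compatibility axiom for RRT homomorphisms here; that axiom is only needed (implicitly) to know that $f$ is determined by its contracted edges, which you already use. With these two simplifications your proof collapses to the paper's three-line inductive step.
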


\begin{proof}
We begin by showing that $T$ and $\wt T$ have the same number of leaves, and that $f$ satisfies $f(\lambda_i^T) = \lambda_i^{\wt T}$ for all $i$.
For this, it suffices to show that $f$ is injective on leaves.
Suppose that for some $i \neq j$, $f(\lambda_i^T) = f(\lambda_j^T)$.
The preimage of every vertex in $\wt T$ is connected, so the outgoing neighbor of $\lambda_i^T$ must also be sent to $f(\lambda_i^T)$.
This contradicts the hypothesis that $f$ sends interior vertices to interior vertices.

We prove the lemma by induction on $\#\!\wt T - \#\!T$.
If $\wt T$ and $T$ have the same number of vertices, then they are isomorphic and the claim is trivially true.
Next, suppose we have proven the claim as long as the inequality $\#\!\wt T - \#\!T \leq k$ holds, and suppose that $\wt T, T$ satisfy $\#\!\wt T = \#\!T + k+1$.
Choose an edge $\wt\alpha \wt E\wt\beta$ with $f(\wt\alpha)=f(\wt\beta)$, and assume that $\wt\beta$ is further from the root than $\wt\alpha$.
Define $T'$ to be the stable RRT gotten by contracting $\wt\alpha\wt E\wt\beta$ in $\wt T$.
Then $\wt T$ can be obtained from $T'$ by making a single move.
Moreover, $f\colon \wt T \to T$ can be factored as $\wt T \to T' \to T$, where $\wt T \to T'$ is the map that contracts $\wt\alpha\wt E\wt\beta$, and $g\colon T' \to T$ is defined by
\begin{align}
g(\wt\gamma) \coloneqq \begin{cases}
f(\wt\alpha), & \wt\gamma \in \{\wt\alpha,\wt\beta\}, \\
f(\wt\gamma), & \text{otherwise}.	
 \end{cases}
\end{align}
By induction, $g\colon T' \to T$ is the result of applying finitely many moves to $T$, so we have proven the claim by induction.
\end{proof}

Another way to characterize a stable RRT is as a \emph{1-bracketing} of $\{1,\ldots,r\}$.
Each 1-bracket corresponds to the RRT's local structure at a particular interior vertex.

\begin{definition}
\label{def:Krbr}
A \emph{1-bracket of $r$} is a nonempty consecutive subset $B \subset \{1,\ldots,r\}$. \label{p:B}
A \emph{1-bracketing of $r$} is a collection $\sB$\label{p:sB} of 1-brackets of $r$ satisfying these properties:
\begin{itemize}
\item[] {\sc (Bracketing)} If $B, B' \in \sB$ have $B \cap B' \neq \emptyset$, then either $B \subset B'$ or $B' \subset B$.

\item[] {\sc (Root and leaves)} $\sB$ contains $\{1,\ldots,r\}$ and $\{i\}$ for every $i$.
\end{itemize}
We denote the set of all 1-bracketings of $r$ by $K_r^\br$,\label{p:Krbr} and define a partial order by defining $\sB' < \sB$ if $\sB$ is a proper subcollection of $\sB'$.
\null\hfill$\triangle$
\end{definition}

\begin{definition}
\label{def:T_bracket}
Fix a stable tree $T \in K_r^\tree$.
A \emph{$T$-bracket} is a 1-bracket $B$ of $r$ with the property that for some $\alpha \in T$, $B$ is the set of indices $i$ for which $T_\alpha$ contains $\lambda_i$.
\label{p:Balpha}
We denote this bracket by $B(\alpha) \coloneqq B$.
\null\hfill$\triangle$
\end{definition}

\noindent Note that since $T$ is stable, the $T$-brackets are in bijective correspondence with the vertices of $T$.

\begin{proposition}
\label{prop:Kr_iso}
\label{p:nu}
The function $\nu\colon K_r^\tree \to K_r^\br$ that sends a stable RRT $T$ to the set of $T$-brackets is an isomorphism of posets.
\end{proposition}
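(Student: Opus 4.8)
The plan is to show that $\nu$ is a well-defined bijection and that both $\nu$ and $\nu^{-1}$ are order-preserving. First I would verify that $\nu(T)$ is actually a $1$-bracketing: the {\sc (Bracketing)} axiom follows because for any two vertices $\alpha, \beta$ of $T$, the subtrees $T_\alpha$ and $T_\beta$ are either nested or disjoint (their intersection is a subtree hanging below the vertex $\alpha(\alpha_\root, \alpha, \beta)$, which forces containment unless they are disjoint), so the corresponding leaf-index sets $B(\alpha)$, $B(\beta)$ are nested or disjoint; and {\sc (Root and leaves)} holds since $T_{\alpha_\root} = T$ gives $\{1,\ldots,r\}$ and each leaf $\lambda_i$ gives $\{i\}$. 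Consecutiveness of $B(\alpha)$ follows from the ribbon (planar) structure: the cyclic orderings at each vertex induce the standard left-to-right order on leaves, and the leaves below any vertex form a contiguous block in that order.

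Next I would construct the inverse. Given a $1$-bracketing $\sB$, build an RRT whose vertices are the brackets in $\sB$, with $B' \in \incom(B)$ iff $B' \subsetneq B$ is maximal with this property (i.e.\ there is no $B''\in\sB$ with $B' \subsetneq B'' \subsetneq B$); the cyclic order at $B$ is inherited from the linear order of $\{1,\ldots,r\}$ on the children, with the outgoing edge inserted appropriately; the root is $\{1,\ldots,r\}$. One checks via the {\sc (Bracketing)} axiom that ``maximal proper sub-bracket'' makes the children of $B$ partition $B$ into consecutive blocks, so conditions (1)--(3) of Lemma~\ref{lem:RRT_in} hold and we get a genuine RRT; stability holds because a vertex with exactly one child $B'$ would need $B' = B$ (the single child must partition $B$), contradicting $B' \subsetneq B$. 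This assignment is visibly inverse to $\nu$: the $T$-brackets of the tree built from $\sB$ are exactly the elements of $\sB$, and the tree built from the $T$-brackets of $T$ recovers $T$ since ``$T_{\alpha}$ contains $\lambda_i$'' determines the parent-child relation exactly as above. (The bijectivity is also implicit in the remark that $T$-brackets correspond bijectively to vertices of $T$.)

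Finally I would check monotonicity in both directions. A move on $T$, in the notation introduced before Def.-Lem.~\ref{deflem:Krtree_poset}, inserts one new interior vertex $\alpha$ whose children are a consecutive subset $(\gamma_{p+1},\ldots,\gamma_{p+l})$ of the children of some $\alpha$; equivalently it adds to the set of $T$-brackets exactly one new bracket, namely $B(\gamma_{p+1})\cup\cdots\cup B(\gamma_{p+l})$, and changes nothing else. Hence if $T' < T$ (a sequence of moves takes $T$ to $T'$) then $\nu(T)$ is a proper subcollection of $\nu(T')$, i.e.\ $\nu(T') < \nu(T)$ in $K_r^\br$. Conversely, if $\sB' \supsetneq \sB$, I would induct on $\#\sB' - \#\sB$: pick a bracket $B \in \sB' \setminus \sB$ that is minimal among such (so all brackets of $\sB'$ strictly inside $B$ already lie in $\sB$); then $\sB' \setminus \{B\}$ is still a $1$-bracketing, and passing from $\nu^{-1}(\sB'\setminus\{B\})$ to $\nu^{-1}(\sB')$ is precisely one move (it subdivides the edge above the children of $B$). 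So $\nu^{-1}$ is order-preserving, and $\nu$ is a poset isomorphism.

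The main obstacle I anticipate is the bookkeeping around the ribbon (cyclic-ordering) structure: showing that $B(\alpha)$ is always \emph{consecutive}, that the children of a bracket $B$ really do partition $B$ into consecutive blocks, and that the cyclic orders and orientations match up on the nose so that $\nu^{-1}\circ\nu = \id$ at the level of RRT isomorphism classes (not just unordered trees). Everything else is a routine translation between the two languages, with Lemma~\ref{lem:RRT_in} doing the heavy lifting on the tree side.
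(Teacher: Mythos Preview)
Your proposal is correct and follows essentially the same approach as the paper: verify $\nu$ lands in $K_r^\br$, build the inverse $\tau$ by taking brackets as vertices with the covering relation as $\incom$ (via Lemma~\ref{lem:RRT_in}), check the two are mutually inverse, and then check monotonicity in both directions. The one minor divergence is in proving $\nu^{-1}$ is order-preserving: the paper constructs an explicit surjective RRT homomorphism $\tau(\sB')\to\tau(\sB)$ and invokes Lemma~\ref{lem:RRT_ll}, whereas your induction on $\#\sB'-\#\sB$ (peeling off one minimal extra bracket at a time and observing this is a single move) is more direct and bypasses that lemma entirely.
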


\begin{proof}
\noindent\emph{Step 1: If $T$ is a stable RRT with $r$ leaves, then $\nu(T)$ is a 1-bracketing of $r$.}

\medskip

\noindent We have $B({\alpha_\root}) = \{1,\ldots,r\}$ and $B(\lambda_i^T) = \{i\}$, so $\nu(T)$ contains $\{1,\ldots,r\}$ and $\{i\}$ for every $i$.
Clearly every $B(\alpha) \in \nu(T)$ is nonempty and consecutive.
Finally, if $\alpha, \beta \in T$ have neither $\alpha \in T_\beta$ nor $\beta \in T_\alpha$, then $B(\alpha) \cap B(\beta) = \emptyset$: indeed, if $i \in B(\alpha) \cap B(\beta)$, then the path from $\lambda_i$ to $\alpha_\root$ passes through both $\alpha$ and $\beta$.
On the other hand, if $\beta \in T_\alpha$, then $B(\beta) \subset B(\alpha)$.

\medskip

\noindent\emph{Step 2: We define a putative inverse $\tau\colon K_r^\br \to K_r^\tree$.}

\medskip

\noindent Given $\sB \in K_r^\br$, we will define $\tau(\sB)$ via Lemma~\ref{lem:RRT_in}.
Define
\begin{gather}
V \coloneqq \sB,
\qquad
\{1,\ldots,r\} \eqqcolon \alpha_\root \in V,
\\
B' \in \incom(B) \iff \bigl(B' \subsetneq B \:\text{ and }\: \not\!\exists\: B'' \in \sB: B' \subsetneq B'' \subsetneq B\bigr),
\nonumber
\end{gather}
and denote the vertex corresponding to $B \in \sB$ by $\alpha(B)$. \label{p:alpha_B}
Any distinct $\alpha(B'), \alpha(B'') \in \incom(\alpha(B))$ must have $B' \cap B'' = \emptyset$, since otherwise one of $B'$ and $B''$ would be properly contained in the other; we may therefore order $\incom(\alpha(B))$ by declaring that $\alpha(B'), \alpha(B'') \in \incom(\alpha(B))$ have $\alpha(B') <_{\alpha(B)} \alpha(B'')$ if and only if $i' < i''$ for all $i' \in B'$, $i'' \in B''$.

We now verify conditions (1--3) from Lemma~\ref{lem:RRT_in}.
To prove (1), note that for any $\alpha(B) \in V$ and $\alpha(B') \in \incom(\alpha(B))$, $B'$ is a proper subset of $B$; therefore $\incom(\alpha(B))$ does not contain $\alpha(\{1,\ldots,r\}) = \alpha_\root$.
For (2), fix $\alpha(B) \in V \setminus \{\alpha_\root\}$ and define $\Sigma \coloneqq \{B' \in \sB \:|\: B' \supsetneq B\}$.
Then $\Sigma$ contains $\{1,\ldots,r\}$, hence is nonempty.
Moreover, if $B', B'' \in \Sigma$ are distinct and minimal with respect to inclusion, then $B' \cap B'' \supset B \neq \emptyset$; therefore one of $B', B''$ must contain the other, a contradiction to minimality.
This shows that $\Sigma$ contains a unique minimal element, which is the unique $B'$ with $\incom(\alpha(B')) \ni \alpha(B)$; this establishes (2).
Finally, if $\alpha(B^1), \ldots, \alpha(B^\ell) \in V$ is a sequence with $\ell \geq 2$ and $\alpha(B^j) \in \incom(\alpha(B^{j+1}))$ for every $j$, then $B^1 \subsetneq B^\ell$, hence $\alpha(B^1) \neq \alpha(B^\ell)$.

It is clear that $\tau(\sB)$ is stable and that $\{i\}$ is the $i$-th leaf of $\tau(\sB)$.

\medskip

\noindent{\it Step 3: We show that $\nu$ and $\tau$ are inverse bijections.}

\medskip

\noindent First, fix $T \in K_r^\tree$; we claim $T \simeq \tau(\nu(T))$.
There is an obvious identification of vertices, which identifies root with root.
Next, we must show that the edge relations on the vertices $T$ are the same, which is to say that $\beta \in \incom(\alpha)$ is equivalent to $B(\beta) \subsetneq B(\alpha)$ and the nonexistence of $\gamma \in T$ with $B(\beta) \subsetneq B(\gamma) \subsetneq B(\alpha)$.

Fix $\beta \in \incom(\alpha)$.
Certainly $B(\beta) \subset B(\alpha)$, and this containment is proper by the stability of $T$.
(Indeed, define an outgoing path by setting $\delta_1 \coloneqq \alpha$, choosing $\delta_2$ to be an incoming neighbor of $\alpha$ other than $\beta$, and inductively defining $\delta_{i+1}$ to be an incoming neighbor of $\delta_i$ as long as $\incom(\delta_i)$ is not empty.
This path will terminate, by condition (3) in Lemma~\ref{lem:RRT_in}.
Moreover, this path does not include $\beta$: if it did, it would by (2) have $\delta_i = \alpha$ for some $i \geq 2$, which is impossible by (3).
If $\lambda_j$ is the leaf at which this path terminates, then $j \in B(\alpha)\setminus B(\beta)$.)
Suppose
for a contradiction
that there exists $\gamma$ with $B(\beta) \subsetneq B(\gamma) \subsetneq B(\alpha)$, and choose $i \in B(\beta)$.
Since $B(\alpha), B(\beta), B(\gamma)$ all contain $i$, the path $[\lambda_i,\alpha_\root]$ must contain $\alpha, \beta, \gamma$.
The containments $B(\beta)\subsetneq B(\gamma) \subsetneq B(\alpha)$ now imply that the path $\beta = \delta_1, \delta_2, \ldots, \delta_\ell = \alpha$ from $\beta$ to $\alpha$ is oriented toward the root and has $\delta_i = \gamma$ for some $i \in [2,\ell-1]$.
Without loss of generality we may assume $i = 2$.
Then $\beta \in \incom(\gamma)$, so $\beta$ cannot lie in $\incom(\alpha)$, a contradiction.

Conversely, suppose $B(\beta) \subsetneq B(\alpha)$ and that there does not exist $\gamma$ with $B(\beta) \subsetneq B(\gamma) \subsetneq B(\alpha)$.
An argument similar to the one in the previous paragraph yields $\beta \in \incom(\alpha)$.

\medskip

\noindent Second, fix $\sB \in K_r^\br$; we claim $\sB = \nu(\tau(\sB))$.
To prove this, it suffices to show that $\alpha(\{i\}) \in T_{\alpha(B)}$ if and only if $i \in B$.
Suppose $\alpha(\{i\}) \in T_{\alpha(B)}$.
This means that there is a sequence of 1-brackets $\{i\} = B_1, B_2, \ldots, B_\ell = B$ in $\sB$ such that for every $i$, either (a) $B_i \subsetneq B_{i+1}$ and there exists no $B' \in \sB$ with $B_i \subsetneq B' \subsetneq B_{i+1}$, or (b) the same holds but with $B_i$ and $B_{i+1}$ interchanged.
In fact, an argument similar to the one made in the proof of {\sc (no cycles)} in Lemma~\ref{lem:RRT_in} implies that for every $i$ it is (a) that holds.
Therefore $i$ lies in $B$.
Conversely, suppose $i \in B$.
Define a sequence in $\sB$ by setting $B_1 \coloneqq B$ and, as long as $B_i$ is not equal to $\{i\}$, defining $B_{i+1}$ to be the largest element of $\sB$ satisfying $B_i \supsetneq B_{i+1} \supsetneq \{i\}$.
This defines a non-self-intersecting path in $\tau(\sB)$ that begins at $\alpha(B)$ and terminates at the $i$-th leaf, which proves the backwards direction of the assertion that $\alpha(\{i\}) \in T_{\alpha(B)}$ is equivalent to $i \in B$.

\medskip

\noindent{\it Step 4: We show that $\nu$ and $\tau$ respect the partial orders on $K_r^\tree$ and $K_r^\br$.}

\medskip

\noindent First, we show that if $T' < T$, then $\nu(T') < \nu(T)$.
We may assume without loss of generality that $T'$ is the result of performing a single move on $T$.
Denote by $\alpha \in T$ the vertex at which the move is performed, so that $T'$ is produced from $T$ by introducing a new incoming neighbor of $\alpha$.
We may therefore regard $V(T)$ as a subset of $V(T')$.
If $\beta$ is a vertex of $T$, and $B(\beta)$ resp.\ $B'(\beta)$ denote the indices of the leaves in $T_\beta$ resp.\ in $T'_\beta$, then $B(\beta) = B'(\beta)$.
Therefore $\nu(T') < \nu(T)$.

Second, we show that if $\sB' < \sB$, then $\tau(\sB') < \tau(\sB)$.
Define a map $f\colon \tau(\sB') \to \tau(\sB)$ like so:
\begin{align}
f(\alpha(B')) \coloneqq \alpha\bigl(\min\{B \in \sB \:|\: B \supset B'\}\bigr),	
\end{align}
where the minimum is taken with respect to inclusion.
By {\sc (root and leaves)}, the set over which we are taking the minimum contains $\{1,\ldots,r\}$, hence is nonempty; therefore $f$ is well-defined.
I claim that $f$ is a surjective homomorphism of stable RRTs.
Again by {\sc (root and leaves)}, $f$ sends
$\lambda_i^{\tau(\sB')}$ to $\lambda_i^{\tau(\sB)}$
and $\alpha_\root^{\tau(\sB')}$ to $\alpha_\root^{\tau(\sB)}$;
since $\sB$ is a subcollection of $\sB'$, $f$ is surjective.
It remains to show that the preimage under $f$ of each vertex in
$\tau(\sB)$
is connected.
Fix $B \in \sB$; it suffices to show that for any $B' \in \sB'$ with $f(\alpha(B')) = \alpha(B)$, the path from $\alpha(B')$ to $\alpha(B)$ in $\tau(\sB')$ is contained in $f^{-1}\{\alpha(B)\}$.
This is apparent from the definition of $f$, so we may conclude that $f$ is a surjective morphism of stable RRTs, hence $\tau(\sB') < \tau(\sB)$ by Lemma~\ref{lem:RRT_ll}.
\end{proof}

\noindent By this lemma, we may define $K_r \coloneqq K_r^\tree = K_r^\br$. \label{p:Kr}

\subsection{\texorpdfstring{$\wh K_r$}{Kr} is an abstract polytope of dimension $r-2$}
\label{ss:Kr_polytope}

In this subsection we prove the following proposition.

\begin{proposition}[Key properties of $K_r$]
\label{prop:Kr_main}
The posets $(K_r)$ satisfy the following properties:
\begin{itemize}
\item[] \textsc{(abstract polytope)} For $r \geq 2$, $\wh{K_r} \coloneqq K_r \cup \{F_{-1}\}$ is an abstract polytope of dimension $r-2$.

\item[] \textsc{(recursive)} For any $T \in K_r^\tree$, there is an inclusion of posets
\begin{align}
\gamma_T\colon \prod_{\alpha \in T_\inte} K_{\#\!\incom(\alpha)}^\tree \hra K_r^\tree,
\end{align}
which restricts to a poset isomorphism onto $\cl(T) = (F_{-1},T]$.
\end{itemize}
\end{proposition}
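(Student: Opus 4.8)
\textbf{Proof proposal for Proposition~\ref{prop:Kr_main}.}
The plan is to prove the two properties in the opposite of the order listed, since the \textsc{(recursive)} property will be the key technical input for establishing the \textsc{(abstract polytope)} property by induction on $r$.

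\emph{The \textsc{(recursive)} map.}
First I would construct $\gamma_T$ explicitly at the level of sets. Given $T \in K_r^\tree$ and a choice, for each interior vertex $\alpha \in T_\inte$, of a stable RRT $S_\alpha \in K_{\#\!\incom(\alpha)}^\tree$, I would form a new RRT by replacing the ``corolla'' at $\alpha$ (the vertex $\alpha$ together with its incoming edges, viewed as a stable RRT with $\#\!\incom(\alpha)$ leaves) with $S_\alpha$, gluing the $i$-th leaf of $S_\alpha$ to the $i$-th incoming branch of $\alpha$ and the root of $S_\alpha$ to the outgoing edge of $\alpha$; the cyclic/linear orderings match up by construction. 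This is manifestly injective, since from the output RRT one recovers $T$ by contracting all the newly-introduced edges (i.e. all edges internal to some $S_\alpha$), and one recovers each $S_\alpha$ as the corresponding local piece. To see $\gamma_T$ is a map of posets and lands in $(F_{-1},T]$: a move on the input data is a move on some single factor $S_\alpha$, which corresponds under $\gamma_T$ to a move on the output RRT performed at a vertex lying over $\alpha$; conversely, since every $T' < T$ is obtained from $T$ by a sequence of moves (Def.-Lem.~\ref{deflem:Krtree_poset}), and $\nu$ identifies $T'<T$ with $\nu(T)\subsetneq\nu(T')$, one checks that every $T$-bracket survives in $T'$, so the extra brackets of $T'$ refine the brackets of $T$ one vertex at a time; this exhibits $T'$ as $\gamma_T$ applied to the tuple of local pieces, giving that $\gamma_T$ restricts to a bijection — indeed a poset isomorphism — onto $(F_{-1},T]=\cl(T)$. (It is cleanest to carry this out in the bracket model: a $1$-bracketing $\sB'\leq\nu(T)$ decomposes uniquely as a disjoint union, over $\alpha\in T_\inte$, of the $1$-bracketings it induces on the ``letters'' $\incom(\alpha)$, and this decomposition is exactly $\gamma_T^{-1}$.)

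\emph{The \textsc{(abstract polytope)} property.}
Here I would first recall from Def.~\ref{def:abstract_polytope} exactly which axioms must be checked for $\wh{K_r} = K_r\cup\{F_{-1}\}$: it is ranked with $F_{-1}$ the bottom element and $F_\top = $ (the corolla RRT $=$ the trivial $1$-bracketing) the top element; the strong connectedness / flag-connectedness of the order complex; and the crucial \emph{diamond condition}, that every interval of length $2$ has exactly two elements strictly between its endpoints. The rank function is $d$ from Def.~\ref{def:RRT_dim} (shifted so that $F_{-1}$ has rank $-1$), and Lemma~\ref{lem:Kr_move_dim} shows a move drops $d$ by exactly $1$, so $\wh{K_r}$ is ranked of the asserted dimension $r-2$ by Lemma~\ref{lem:Kr_dim_props}(b). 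For an interval $[T',T]$ with $T'<T$, the \textsc{(recursive)} property gives a poset isomorphism $[T',T]\cong\prod_{\alpha}[S'_\alpha, F_\top^{K_{\#\!\incom(\alpha)}}]$ (where $S'_\alpha$ is the $\alpha$-piece of $T'$), and a product of bounded posets satisfies the diamond condition at the top in length-$2$ intervals as soon as each factor does; more generally the diamond condition for a length-$2$ interval in $\wh{K_r}$ reduces, via $\gamma_T$ with $T$ the top of the interval, to the same statement for $\wh{K_m}$ with $m<r$, or is checked directly in the base case. The base cases $r=2$ ($\wh{K_2}$ a point above $F_{-1}$) and $r=3$ ($\wh{K_3}$ the face poset of an interval) are immediate. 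Flag-connectedness I would likewise push through the recursive structure, or verify by the standard argument that the $1$-skeleton of the poset of $1$-bracketings is connected (any $1$-bracketing can be reduced to the trivial one by removing brackets one at a time, staying inside the poset), together with the product decomposition of intervals.

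\emph{Anticipated main obstacle.}
The genuinely delicate step is verifying the \emph{diamond condition} cleanly — i.e.\ that performing two independent moves on $T$, or one move and then a refinement, can be ``completed'' to a unique common lower bound in exactly two ways, and no more. The recursive isomorphism $[T',T]\cong\prod_\alpha[S'_\alpha,F_\top]$ reduces this to: in $\wh{K_m}$, every length-$2$ interval is a diamond. For intervals whose top is not $F_\top$ this is again handled by induction via $\gamma$; the only irreducible case is a length-$2$ interval of the form $[S, F_\top]$ in $\wh{K_m}$, i.e.\ $d(S) = m-4$, whose covers $S'$ with $d(S')=m-3$ are the codimension-$1$ degenerations — here one must argue combinatorially (most transparently in the bracket model: a $1$-bracketing with exactly $m-3$ ``extra'' brackets beyond root-and-leaves lies below exactly two with $m-4$ extra brackets, obtained by deleting one of the two brackets, and above a predictable set with $m-2$ extra brackets) that there are precisely two. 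I would isolate this as a short self-contained sublemma about $1$-bracketings and prove it by direct inspection of the poset of consecutive subsets of $\{1,\dots,m\}$. Everything else (ranked-ness, bounds, the bijection underlying $\gamma_T$) is bookkeeping built on Lemmas~\ref{lem:RRT_in}, \ref{lem:Kr_move_dim}, \ref{lem:RRT_ll} and Prop.~\ref{prop:Kr_iso}.
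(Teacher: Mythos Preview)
Your treatment of \textsc{(recursive)} is essentially the paper's: the paper also defines $\gamma_T$ by replacing the corolla at each interior vertex with the chosen tree, and builds an explicit inverse $\sigma_T$ by taking the unique surjective RRT homomorphism $\wh T \to T$ (Lemma~\ref{lem:RRT_ll}) and extracting the subtree over each $\alpha \in T_\inte$. Your suggestion to carry this out in the bracket model is a valid alternative but not what the paper does.

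For \textsc{(abstract polytope)}, your strategy differs substantively from the paper's. You propose induction on $r$, using $\gamma_G$ to reduce every length-$2$ interval $[F,G]$ with $G \neq F_\top$ to a product of smaller associahedra, and handling $[S,F_\top]$ directly in the bracket model. The paper does not induct at all: for \textsc{(diamond)} it simply observes (in $K_r^\br$) that if $d(\sB') = d(\sB)-2$ then $\sB'$ is $\sB$ plus exactly two $1$-brackets, so $(\sB',\sB)$ has exactly two elements --- this handles every interval with $F \neq F_{-1}$ in one line, with no recursion. (Your bracket-count sketch has the numbers garbled --- a bracketing at dimension $m-4$ has $2$ extra brackets, not $m-3$ --- but the idea is right, and in fact it is the paper's entire argument, applied globally rather than only at the top.) For the $F=F_{-1}$ case the paper directly counts the moves available on a $1$-dimensional tree.

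The genuine gap in your sketch is \textsc{(strongly connected)}. Your inductive reduction via $\gamma_G$ only applies when $G \neq F_\top$: when $G = F_\top$ is the corolla, $T_\inte$ is a single vertex with $r$ incoming edges, so $\gamma_{F_\top}\colon K_r \to K_r$ is the identity and nothing is gained. Your fallback (``any $1$-bracketing can be reduced to the trivial one by removing brackets one at a time'') gives paths \emph{to} $F_\top$, but $F_\top$ is excluded from the open interval $(F_{-1},F_\top)$, so this does not establish connectedness of that interval. The paper treats this case by exhibiting explicit paths between codimension-$1$ faces in four exhaustive cases (via pictures), and treats $(F,F_\top)$ with $F \neq F_{-1}$ by the union trick: for two codimension-$1$ elements $\wt\sB^1,\wt\sB^2$ above $\sB'$, the union $\wt\sB^1 \cup \wt\sB^2$ is a valid $1$-bracketing (because both contain $\sB'$) of codimension $2$, hence lies in $(\sB',\sB)$ and connects them. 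Your induction would still need both of these direct arguments as a base; neither follows from the product decomposition.
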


\begin{proof}
These two properties are proven in Def.-Lem.~\ref{deflem:gammaT} resp.\ Prop.~\ref{prop:Kr_polytope}.
\end{proof}

\noindent We begin by establishing {\sc(recursive)}.
After the proof of Def.-Lem.~\ref{deflem:gammaT}, we will illustrate the definition of $\gamma_T$ in an example.

\begin{deflem}
\label{deflem:gammaT}
Fix $r \geq 2$ and $T \in K_r^\tree$.
Define a map
\begin{align}
\gamma_T\colon \prod_{\alpha \in T_\inte} K_{\#\!\incom(\alpha)}^\tree \to K_r^\tree
\end{align}
by sending $(T^\beta)_{\beta \in \inte(\alpha)}$ to the RRT gotten by replacing each $\beta$ and its incoming neighbors and edges by $T^\beta$.
Then $\gamma_T$ restricts to a poset isomorphism from its domain to $\cl(T)$.
\end{deflem}

\begin{proof}
We will define an inverse $\sigma_T\colon \cl(T) \to \prod_{\alpha\in T_\inte} K_{\#\!\incom(\alpha)}^\tree$ to the restriction of $\gamma_T$.
Fix $\wh T \in \cl(T)$; then there is a (unique) surjective homomorphism $f\colon \wh T \to T$ of ribbon trees.
For any $\alpha \in T$, define $\wh\alpha \in \wh T$ to be the element of $f^{-1}\{\alpha\}$ that is closest to the root.
For any $\alpha \in T_\inte$, define
\begin{align}
\wh T^\alpha \coloneqq f^{-1}\{\alpha\} \cup \bigl\{\wh\beta \:|\: \beta \in \incom(\alpha)\}, \quad \alpha_\root^{\wh T^\alpha} \coloneqq \wh \alpha.
\end{align}
Then $\wh T^\alpha$ is a subtree of $\wh T$: since $f$ is a tree homomorphism, $f^{-1}\{\alpha\}$ is a subtree of $\wh T$.
For any $\beta \in \incom(\alpha)$, we must either have $f(\out(\wh\beta)) = f(\wh\beta) = \beta$ or $f(\out(\wh\beta)) = \out(f(\wh\beta)) = \alpha$; by the definition of $\wh\beta$, the latter equality must hold, so $\wh T^\alpha$ is indeed a subtree of $\wh T$.
Furthermore, the ribbon tree structure of $\wh T$ induces a ribbon tree structure on $\wh T^\alpha$, so $\wh T^\alpha$ is an RRT.
I claim that $\wh T^\alpha$ is stable and has leaves in bijection with $\incom(\alpha)$.
For any $\beta \in \incom(\alpha)$, it follows immediately from the definition of $\wh\beta$ that $\wh\beta \in \wh T^\alpha$ is a leaf.
Next, fix $\beta \in f^{-1}\{\alpha\}$; we must show $\beta \in \wh T^\alpha$ has at least 2 incoming neighbors.
In fact, its incoming neighbors are in correspondence with the incoming neighbors of $\beta$ in $\wh T$.
We may conclude that $\wh T^\alpha$ is a stable RRT with leaves in correspondence with $\incom(\alpha)$.
We may now define $\sigma_T$:
\begin{align}
\sigma_T \colon \cl(T) \to \prod_{\alpha\in T_\inte} K_{\#\!\incom(\alpha)}^\tree, \quad \sigma_T(\wh T) \coloneqq \bigl(\wh T^\alpha\bigr)_{\alpha \in T_\inte}.	
\end{align}
It is clear from the definition of $\sigma_T$ that it is an inverse to the restriction $\gamma_T \colon \prod_{\alpha \in T_\inte} K_{\#\!\incom(\alpha)}^\tree \to \cl(T)$.
It is also clear that $\gamma_T$ and $\sigma_T$ respect the partial orders on $\prod_{\alpha \in T_\inte} K_{\#\!\incom(\alpha)}^\tree$ and $K_r^\tree$.
\end{proof}


\begin{example}
In the following figure, we illustrate the definition of $\gamma_T$ in a simple example.
On the left is $T$, which has three interior vertices, the incoming edges of which are colored red, blue, or green respectively.
$\gamma_T$ acts by replacing the red, blue, and green corollas by RRTs in $K_3^\tree$, $K_4^\tree$, and $K_3^\tree$, respectively.
\begin{figure}[H]
\centering
\def\svgwidth{0.9\columnwidth}
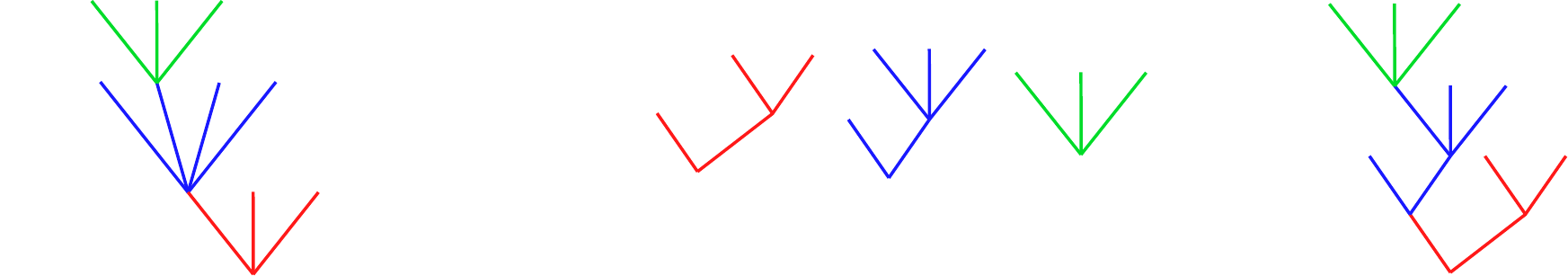
\label{fig:gamma_example}
\end{figure}
\null\hfill$\triangle$
\end{example}


We now turn to the proof of the {\sc(abstract polytope)} property from Prop.~\ref{prop:Kr_main}.
Define $\wh{K_r} \coloneqq K_r \cup \{F_{-1}\}$, where $F_{-1}$ is a formal minimal element with $d(F_{-1}) \coloneqq -1$.
We first recall the notion of abstract polytope:
\begin{definition}
\label{def:abstract_polytope}
An {\bf{abstract polytope of rank $n \in \bZ_{\geq -1}$}} is a partially ordered set $P$ (whose elements are called {\bf{faces}}) satisfying the properties {\sc (extremal)}, {\sc (flag-length)}, {\sc (strongly connected)}, and {\sc (diamond)}, defined below.
\begin{itemize}
\item[] {\sc (extremal)} $P$ has a least and a greatest face, denoted $F_{-1}$ resp.\ $F_\top$.

\item[] {\sc (flag-length)} Every flag (i.e.\ maximal chain) of $P$ has length $n+1$, i.e.\ contains $n+2$ faces.
\end{itemize}
For $F, G \in P$ with $F \leq G$, recall that the closed interval $[F,G]$ is defined by
\begin{align}
[F,G] \coloneqq \{H \in P \:|\: F \leq H \leq G\}.	
\end{align}
It follows from {\sc (extremal)} and {\sc (flag-length)} that we can endow $P$ with a rank function, where $\rk F$ is defined to be the rank of the poset $[F_{-1},F]$.
\begin{itemize}
\item[] {\sc (strongly connected)} For every $F < G$ with $\rk G - \rk F \geq 3$ and for every $H, H' \in (F,G)$, there is a sequence $(H = H_1, H_2, \ldots, H_k = H')$ in $(F,G)$ such that $H_i$ and $H_{i+1}$ are adjacent for every $i$ (i.e., either $H_i \prec H_{i+1}$ or $H_{i+1} \prec H_i$, where we write $F_1 \prec F_2$ if $F_1 < F_2$ and there exists no $F_3$ with $F_1 < F_3 < F_2$).

\item[] {\sc (diamond)} For every $F < G$ with $\rk G - \rk F = 2$, the open interval $(F,G)$ contains exactly 2 elements.
\end{itemize}
\null\hfill$\triangle$
\end{definition}

\begin{proposition}
\label{prop:Kr_polytope}
For any $r \geq 2$, $\wh{K_r}$ is an abstract polytope of dimension $r-2$.	
\end{proposition}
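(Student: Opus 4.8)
The plan is to induct on $r$, using the \textsc{(recursive)} property of Def.-Lem.~\ref{deflem:gammaT} together with the bracketing model $K_r^\br$ (Prop.~\ref{prop:Kr_iso}) to verify the four axioms of Def.~\ref{def:abstract_polytope}. The base case $r=2$ is trivial: $\wh{K_2}$ is the two-element chain $F_{-1}\prec F_\top$, an abstract polytope of rank $0$. So assume the statement holds for all $2\leq m<r$.

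First I would dispatch \textsc{(extremal)} and \textsc{(flag-length)}, which need no induction. The maximum face is the corolla, i.e.\ the RRT with one interior vertex and $r$ leaves: any stable RRT lies weakly below it, since repeatedly contracting an interior edge is the inverse of a move and preserves stability (merging two interior vertices yields one with $\geq 2+2-1=3$ incoming edges), and this walks $T$ up to the corolla. For \textsc{(flag-length)}, a covering relation in $K_r$ is exactly a single move — if $T'<T$ is realized by two or more moves there is an intermediate element by Lemma~\ref{lem:Kr_move_dim} — the minimal elements of $K_r$ are precisely the $T$ with $d(T)=0$ (these are exactly the RRTs admitting no move, by stability and \eqref{eq:T_dim_reform}), and $d(F_{-1})=-1$ while $d$ of the corolla is $r-2$. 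Since $d$ drops by exactly one along each cover (Lemma~\ref{lem:Kr_move_dim}), every flag of $\wh{K_r}$ has exactly $r$ elements, i.e.\ length $(r-2)+1$, and $d$ is the rank function.

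The two local axioms rest on a single observation in the bracketing model: for any $F\in K_r^\br$ (so $F\neq F_{-1}$) and any $G>F$, writing $F,G$ as $1$-bracketings with $\sB_G\subsetneq\sB_F$, the interval $[F,G]$ is isomorphic to the Boolean lattice on the finite set $\sB_F\setminus\sB_G$. Indeed, every subcollection $\sB''$ with $\sB_G\subseteq\sB''\subseteq\sB_F$ is again a $1$-bracketing — it satisfies \textsc{(Root and leaves)} because it contains $\sB_G$, and \textsc{(Bracketing)} because it is a subcollection of $\sB_F$ — and $\#(\sB_F\setminus\sB_G)=d(G)-d(F)$ by \eqref{eq:RRT_dim} and the identity $\#(T_\inte)=\#\sB-r$ for $T=\tau(\sB)$. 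Hence \textsc{(diamond)} holds for $F\neq F_{-1}$, since a rank-$2$ Boolean lattice has exactly two atoms; and \textsc{(strongly connected)} holds for $F\neq F_{-1}$, since the proper part of a Boolean lattice of rank $\geq 3$ is connected under single-element changes (route any two proper nonempty subsets through singletons). For $F=F_{-1}$, \textsc{(diamond)} uses \textsc{(recursive)}: if $d(G)=1$ then by Lemma~\ref{lem:Kr_dim_props}(a) the product $\cl(G)\cong\prod_{\rho\in G_\inte}K_{\#\!\incom(\rho)}^\tree$ has exactly one nontrivial factor, namely $K_3^\tree$, so $(F_{-1},G)=\cl(G)\setminus\{G\}$ has exactly two elements.

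The main obstacle is \textsc{(strongly connected)} when $F=F_{-1}$ and $d(G)\geq 2$, where \textsc{(recursive)} identifies the open interval $(F_{-1},G)$ with $\bigl(\prod_i K_{m_i}^\tree\bigr)\setminus\{F_\top\}$; after discarding the factors with $m_i=2$, each $m_i\geq 3$ and $\sum_i(m_i-2)=d(G)\geq 2$. If there is one factor then $m_1\geq 4$, and connectivity of $K_{m_1}^\tree\setminus\{F_\top\}=(F_{-1},F_\top)_{\wh{K_{m_1}}}$ is exactly \textsc{(strongly connected)} for $\wh{K_{m_1}}$, which holds by the inductive hypothesis. If there are $\geq 2$ factors, I would argue that every tuple with some coordinate $\neq F_\top$ connects — by raising its other non-$F_\top$ coordinates to $F_\top$ one at a time, keeping one such coordinate pinned so the tuple stays $\neq F_\top$, then lowering the remaining coordinate to a minimal element of its factor — to a ``spike'' that has a minimal element in one position and $F_\top$ elsewhere; and that any two spikes are connected, moving between positions $i\neq i'$ through the tuple with minimal elements in positions $i$ and $i'$ and $F_\top$ elsewhere, and between the two minimal elements of a $K_3$-factor by a detour through a second coordinate. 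The descents to minimal elements and the connectivity of each $K_{m_i}^\tree$ are supplied by the inductive hypothesis that $\wh{K_{m_i}}$ is an abstract polytope (hence graded and connected). Assembling these verifications yields that $\wh{K_r}$ is an abstract polytope of rank $r-2$, completing the induction.
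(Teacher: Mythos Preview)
Your argument is largely sound and in places cleaner than the paper's: the Boolean-lattice identification of $[F,G]$ for $F\neq F_{-1}$ is a nice unifying observation that dispatches both \textsc{(diamond)} and \textsc{(strongly connected)} for those intervals at once, whereas the paper treats them separately and only implicitly uses this structure (in the form ``$\wt\sB^1\cup\wt\sB^2$ is again a $1$-bracketing lying in the interval'').

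There is, however, a genuine gap in your inductive treatment of \textsc{(strongly connected)} when $F=F_{-1}$. In the single-factor case you invoke the inductive hypothesis to get connectivity of $K_{m_1}^\tree\setminus\{F_\top\}$. But when $G=F_\top$ (the corolla), the recursive decomposition has exactly one factor with $m_1=r$, and your inductive hypothesis only covers $m<r$: the argument is circular precisely for the interval $(F_{-1},F_\top)$. For $G\neq F_\top$ the tree has $k\geq 2$ interior vertices, so any factor satisfies $m_i\leq r-k+1\leq r-1$ and induction does apply; your multi-factor detour argument is likewise fine. The failure is isolated to $G=F_\top$.

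This is not a minor oversight: showing that $K_r\setminus\{F_\top\}$ is connected is exactly the content that cannot be reached by your induction, and it is what the paper supplies by a direct, non-inductive construction --- an explicit path in $K_r\setminus\{F_\top\}$ from an arbitrary codimension-$1$ face to a fixed reference face, via a short case analysis on the shape of the two-vertex tree. You need some such base argument for $G=F_\top$; once it is in place, the rest of your induction goes through and yields a proof that is otherwise more streamlined than the paper's.
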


\begin{proof}
\begin{itemize}
\item[] {\sc (extremal)}
The least face is the face $F_{-1}$ we have added to $K_r$ to form $\wh{K_r}$.
The greatest face (in $\wh{K_r^\br}$) is the 1-bracketing $\bigl\{\{1,\ldots,r\},\{1\},\ldots,\{r\}\bigl\}$.

\medskip

\item[] {\sc (flag-length)}
We must show that if $T^0 < \cdots < T^\ell$ is a maximal chain in $K_r^\tree$, then $\ell = r-2$.
By Lemmata~\ref{lem:Kr_dim_props} and \ref{lem:Kr_move_dim}, we have $0 \leq d(T^0) < \cdots < d(T^\ell) \leq r-2$.
To prove the claim, we must show that every dimension between 0 and $r-2$ is represented.
For any $T^i, T^{i+1}$, we must have $d(T^i) = d(T^{i+1}) - 1$: otherwise, there exists $T' \in K_r^\tree$ which can be obtained by performing a single move to $T^{i+1}$, and which satisfies $d(T^i) < d(T') < d(T^{i+1})$; this contradicts the maximality of the chain.
Again by maximality, we must have $T^\ell = F_\top$.
It remains to show $d(T^0) = 0$.
Suppose for a contradiction that $d(T^0)$ is positive; then by Lemma~\ref{lem:Kr_dim_props}, there exists $\alpha \in T^0_\inte$ with $\#\!\incom(\alpha) \geq 3$.
It follows that we may perform a move to $T^0$, so there exists $T' \in K_r^\tree$ with $T' < T^0$, contradicting the maximality of this chain.

\medskip

\item[] {\sc (strongly connected)}
In this step we may assume $r \geq 4$, since otherwise {\sc(strongly connected)} is vacuous.
First, we show that $\wh{K_r}$ is connected.
It suffices to show that for any $a, b \geq 2$ with $a + b - 1 = r$ and $i$ with $1 \leq i \leq a$, there exists a path in $\wh{K_r} \setminus F_\top$ between these two codimension-1 faces:

\begin{figure}[H]
\centering
\def\svgwidth{0.38\columnwidth}
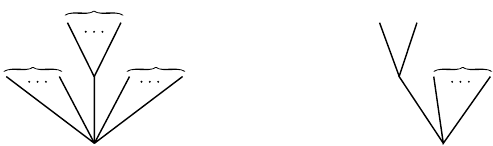
\end{figure}

\noindent We produce such a path in the following four, exhaustive, cases:

\begin{figure}[H]
\centering
\def\svgwidth{0.7\columnwidth}
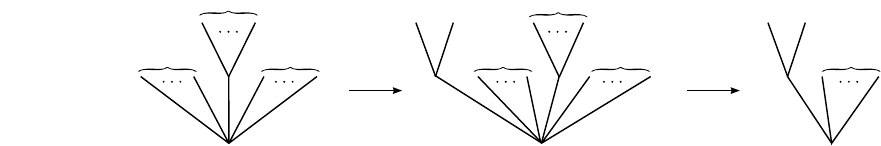
\end{figure}

\begin{figure}[H]
\centering
\def\svgwidth{0.65\columnwidth}
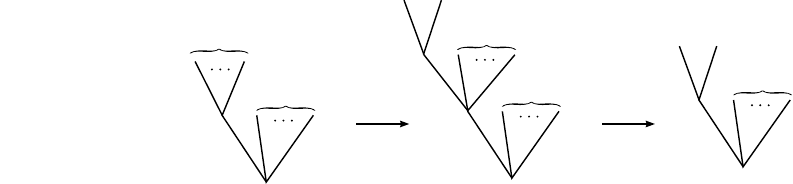
\end{figure}

\begin{figure}[H]
\centering
\def\svgwidth{0.9\columnwidth}
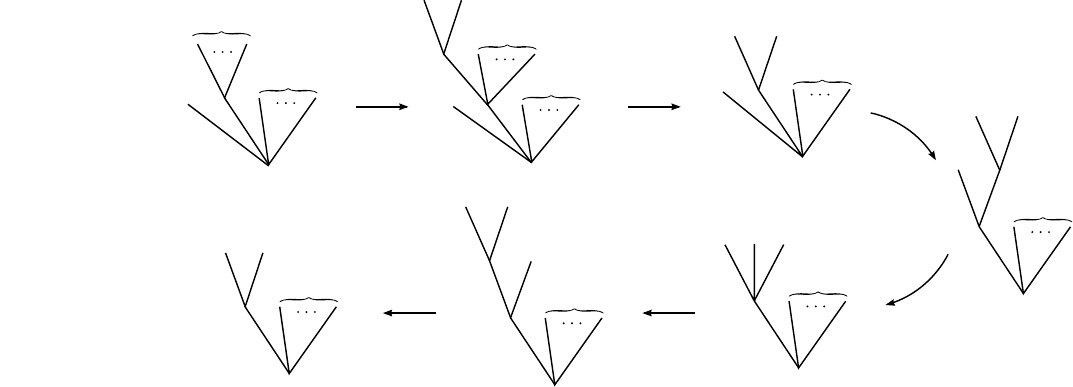
\end{figure}

\begin{figure}[H]
\centering
\def\svgwidth{0.95\columnwidth}
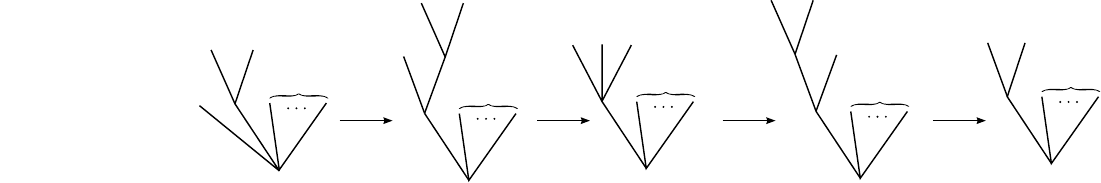
\end{figure}

Next, we show that for any $T \in K_r^\tree$ with $d(T) \geq 2$, the interval $[F_{-1},T]$ is connected.
By Def.-Lem.~\ref{deflem:gammaT}, $[F_{-1},T]$ is isomorphic to $\{F_{-1}\} \cup \prod_{\alpha \in T_\inte} K_{\#\!\incom(\alpha)}^\tree$.
The connectedness of $\wh{K^\tree_s}$
for $s \geq 4$, the fact that $K^\tree_3$ is isomorphic to the face poset of an interval, and the inequality $\sum_{\alpha \in T_\inte} (\#\incom(\alpha)-2) = d(T) \geq 2$ 
imply that $\{F_{-1}\} \cup \prod_{\alpha \in T_\inte} K_{\#\!\incom(\alpha)}^\tree$ is connected.

Finally, we show that for any $\sB,\sB'$ with $\sB' < \sB$ and $d(\sB') \leq d(\sB)-3$, the interval $[\sB',\sB]$ is connected.
Extend the dimension function $d$ to $K_r^\br$ via the identification $K_r^\br \simeq K_r^\tree$.
Then
$d(\sB) = 2r - \#\!\sB - 1$.
To prove that $[\sB',\sB]$ is connected, it suffices to show that for any distinct $\wt\sB^1, \wt\sB^2 \in (\sB',\sB)$ with $d(\wt\sB^1) = d(\wt\sB^2) = d(\sB)-1$, there is a path from $\wt\sB^1$ to $\wt\sB^2$ in $(\sB',\sB)$.
The inequality $\wt\sB^j \geq \sB'$ for $j \in \{1,2\}$ implies that $\wh\sB \coloneqq \wt\sB^1 \cup \wt\sB^2$ is a 1-bracketing.
Moreover, it satisfies $\wh\sB \in [\sB',\sB)$, and by the formula for the dimension of a 1-bracketing given above, it satisfies $d(\wh\sB) = d(\sB) - 2$.
By the hypothesis $d(\sB') \leq d(\sB) - 3$, $\wh\sB$ must therefore lie in $(\sB',\sB)$, so $(\wt\sB^1,\wh\sB,\wt\sB^2)$ is a path in $(\sB',\sB)$.

\medskip

\item[] {\sc(diamond)}
First, fix $T \in K_r^\tree$ with $d(T) = 1$; we must show that the open interval $(F_{-1},T)$ contains exactly two elements.
Lemma~\ref{lem:Kr_dim_props} implies that every vertex in $T_\inte$ has two incoming neighbors except for a single $\alpha$ with $\#\!\incom(\alpha) = 3$.
Denote the incoming neighbors of $\alpha$ by $(\beta_1,\beta_2,\beta_3)$.
There are two possible moves that can be made at $\alpha$, by either splitting off $(\alpha_1,\alpha_2)$ or $(\alpha_2,\alpha_3)$.
In fact, these are the only two moves that can be performed on $T$.
Since $d(T) = 1$, $(F_{-1},T)$ contains two elements.

Next, fix $\sB, \sB' \in K_r^\br$ with $d(\sB') = d(\sB) - 2$.
It follows from the definition of $d$ and Lemma~\ref{prop:Kr_iso} that $\sB'$ is obtained from $\sB$ by adding two 1-brackets.
From this it is clear that the open interval $(\sB',\sB)$ contains two elements.
\end{itemize}
\end{proof}

\section{Construction of the 2-associahedra \texorpdfstring{$W_\bn$}{Wn}}
\label{sec:2ass}


In this section, we define the posets $W_\bn^\tree$ (\S\ref{ss:Wntree_construction}) and $W_\bn^\br$ (\S\ref{ss:Wnbr_construction}), then show that they are isomorphic (\S\ref{ss:Wn_iso}).
This allows us to define the 2-associahedron by $W_\bn \coloneqq W_\bn^\tree = W_\bn^\br$.

\subsection{The poset \texorpdfstring{$W_\bn^\tree$}{Wntree} of stable tree-pairs}
\label{ss:Wntree_construction}

We begin with the definition of $W_\bn^\tree$.
It is rather technical, and we advise the reader to refer to Ex.~\ref{ex:tree-pair_examples} while looking at this definition for the first time.

\begin{definition}
\label{p:2T}
A \emph{stable tree-pair of type $\bn$} is a datum $2T = T_b \sr{f}{\to} T_s$, with $T_b, T_s, f$ described below:
\begin{itemize}
\item The \emph{bubble tree} $T_b$ is an RRT whose edges are either solid or dashed, which must satisfy these properties:
	\begin{itemize}
		\item The vertices of $T_b$ are partitioned as $V(T_b) = V_\comp \sqcup V_\seam \sqcup V_\mk$, where:
			\begin{itemize}
				\item every $\alpha \in V_\comp$ has $\geq 1$ solid incoming edge, no dashed incoming edges, and either a dashed or no outgoing edge;
				\item every $\alpha \in V_\seam$ has $\geq 0$ dashed incoming edges, no solid incoming edges, and a solid outgoing edge; and
				\item every $\alpha \in V_\mk$ has no incoming edges and either a dashed or no outgoing edge.
			\end{itemize}
			We partition $V_\comp \eqqcolon V_\comp^1 \sqcup V_\comp^{\geq2}$ according to the number of incoming edges of a given vertex.
		\item ({\sc stability}) If $\alpha$ is a vertex in $V_\comp^1$ and $\beta$ is its incoming neighbor, then $\#\!\incom(\beta) \geq 2$; if $\alpha$ is a vertex in $V_\comp^{\geq2}$ and $\beta_1,\ldots,\beta_\ell$ are its incoming neighbors, then there exists $j$ with $\#\!\incom(\beta_j) \geq 1$.
	\end{itemize}
	\item The \emph{seam tree} $T_s$ is an element of $K_r^\tree$.
	\item The \emph{coherence map} is a map $f\colon T_b \to T_s$ of sets having these properties:
		\begin{itemize}
			\item $f$ sends root to root, and if $\beta \in \incom(\alpha)$ in $T_b$, then either $f(\beta) \in \incom(f(\alpha))$ or $f(\alpha) = f(\beta)$.
			\item $f$ contracts all dashed edges, and every solid edge whose terminal vertex is in $V_\comp^1$.
			\item For any $\alpha \in V_\comp^{\geq2}$, $f$ maps the incoming edges of $\alpha$ bijectively onto the incoming edges of $f(\alpha)$, compatibly with $<_\alpha$ and $<_{f(\alpha)}$.
			\item $f$ sends every element of $V_\mk$ to a leaf of $T_s$, and if $\lambda_i^{T_s}$ is the $i$-th leaf of $T_s$, then $f^{-1}\{\lambda_i^{T_s}\}$ contains $n_i$ elements of $V_\mk$, which we denote by $\mu_{i1}^{T_b},\ldots,\mu_{in_i}^{T_b}$.
		\end{itemize}
\end{itemize}
We denote by $W_\bn^\tree$\label{p:Wntree} the set of isomorphism classes of stable tree-pairs of type $\bn$.
Here an isomorphism from $T_b \sr{f}{\to} T_s$ to $T_b' \sr{f'}{\to} T_s'$ is a pair of maps $\varphi_b\colon T_b \to T_b'$ and $\varphi_s\colon T_s \to T_s'$ that fit into a commutative square in the obvious way and that respect all the structure of the bubble trees and seam trees.
\null\hfill$\triangle$
\end{definition}


\begin{example}
\label{ex:tree-pair_examples}
In the following figure we illustrate some of the notation that we have just introduced.
We picture the same tree-pair (with $r = 5$, $\bn = (1,1,4,1,0)$) three times, in each case indicating different data.
In each case, $T_b$ is above and $T_s$ is below.
On the left, we label the roots of $T_b$ and $T_s$, the leaves of $T_s$, and the elements of $V_\mk(T_b)$.
In the middle, we indicate the coherence map $f\colon T_b \to T_s$: we color the edges of $T_s$, and use those same colors to show which edges in $T_b$ are identified with the various edges of $T_s$.
Some edges in $T_b$ are contracted by $f$, which we indicate by using black.
On the right, we show how the vertices of $T_b$ are partitioned into $V_\mk$, $V_\seam$, and $V_\comp$.

\begin{figure}[H]
\centering
\def\svgwidth{1.0\columnwidth}
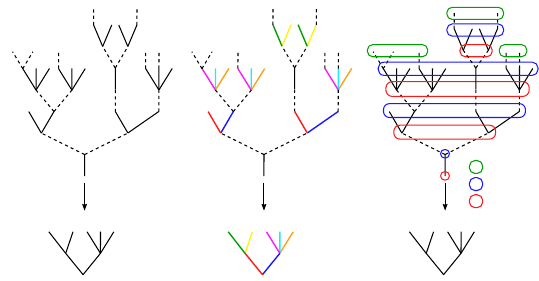
\label{fig:tree-pair_examples}
\end{figure}
\null\hfill$\triangle$
\end{example}


\begin{definition}
\label{def:tree-pair_dim}
For $2T$ a stable tree-pair, we define the \emph{dimension} $d(2T) \in \bZ$ like so:
\begin{align} \label{eq:tree-pair_dim}
d(2T) \coloneqq |\bn| + r - \#\!V^1_\comp(T_b) - \#\!(T_s)_\inte - 2.
\end{align}
\null\hfill$\triangle$
\end{definition}

\begin{remark}
Note that $W_1^\tree$ has a single element, the stable tree-pair $\bullet \to \bullet$; its dimension is zero.
\null\hfill$\triangle$
\end{remark}

\begin{lemma}
\label{lem:Wn_dim}
Fix a stable tree-pair $2T \in W_\bn$.
\begin{itemize}
\item[(a)] The dimension can be re-expressed using this formula:
\begin{equation} \label{eq:2T_dim_reform}
	d(2T) = \sum_{{\alpha \in V^1_\comp(T_b)}\atop{\incom(\alpha) = (\beta)}} \bigl(\#\!\incom(\beta) - 2\bigr)
	+ \sum_{\alpha \in V^{\geq 2}_\comp(T_b)} \left(\Bigl(\sum_{\beta \in \incom(\alpha)} \#\!\incom(\beta)\Bigr)-1\right)
	+ \sum_{\rho \in (T_s)_\inte} \bigl(\#\!\incom(\rho) - 2\bigr).
\end{equation}

\item[(b)] If $\bn \neq (1)$, the dimension satisfies the inequality $0 \leq d(2T) \leq |\bn| + r - 3$.
\end{itemize}
\end{lemma}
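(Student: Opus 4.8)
The plan is to prove (a) by a short vertex-and-edge count in the bubble tree $T_b$, which reduces the right-hand side of \eqref{eq:2T_dim_reform} to the definition \eqref{eq:tree-pair_dim}; part (b) then follows, the lower bound by termwise positivity in \eqref{eq:2T_dim_reform} and the upper bound by combining \eqref{eq:tree-pair_dim} with the coherence-map axioms.

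For (a) I would first record three facts about a stable tree-pair $2T = T_b \sr{f}{\to} T_s$. (i) $\#V_\mk(T_b) = |\bn|$, since $f$ maps $V_\mk$ into the leaves of $T_s$ with exactly $n_i$ preimages of $\lambda_i^{T_s}$. (ii) $V_\seam(T_b) = \bigsqcup_{\alpha \in V_\comp(T_b)}\incom(\alpha)$: the incoming edges of the $V_\comp$-vertices are precisely the solid edges, and each solid edge is the unique outgoing edge of a $V_\seam$-vertex, so the incoming neighbors of the $V_\comp$-vertices are exactly the $V_\seam$-vertices, each counted once. (iii) $\sum_{\beta \in V_\seam(T_b)}\#\incom(\beta) = \#V_\comp(T_b) + |\bn| - 1$: the incoming edges of $V_\seam$-vertices are exactly the dashed edges, the solid edges number $\#V_\seam(T_b)$ by (ii), and an RRT has one fewer edge than it has vertices, so $\sum_{\beta\in V_\seam(T_b)}\#\incom(\beta) = (\#V(T_b) - 1) - \#V_\seam(T_b) = \#V_\comp(T_b) + \#V_\mk(T_b) - 1$, which gives the claim via (i). Now expand the right-hand side of \eqref{eq:2T_dim_reform}: by (ii) the $\#\incom(\beta)$-contributions of the first two sums combine to $\sum_{\beta\in V_\seam(T_b)}\#\incom(\beta)$, which is $\#V_\comp(T_b) + |\bn| - 1$ by (iii), so after writing $\#V_\comp = \#V^1_\comp + \#V^{\geq2}_\comp$ those two sums collapse to $|\bn| - 1 - \#V^1_\comp(T_b)$; and by the identity \eqref{eq:T_valence_sum} of Lemma~\ref{lem:Kr_dim_props}, applied to $T_s \in K_r^\tree$, the third sum equals $r - 1 - \#(T_s)_\inte$. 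Adding these, the total is $|\bn| + r - \#V^1_\comp(T_b) - \#(T_s)_\inte - 2 = d(2T)$.

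For the lower bound in (b) I would observe that each summand in \eqref{eq:2T_dim_reform} is $\geq 0$: in the first sum $\#\incom(\beta) \geq 2$ by {\sc stability}; in the second, {\sc stability} gives some $j$ with $\#\incom(\beta_j) \geq 1$ while $\#\incom(\alpha) \geq 2$, so the bracketed quantity is $\geq 0$; in the third, $\#\incom(\rho) \geq 2$ since $T_s$ is a stable RRT. For the upper bound I would argue from \eqref{eq:tree-pair_dim}, where it suffices to show $\#V^1_\comp(T_b) + \#(T_s)_\inte \geq 1$. If $r \geq 2$, then $T_s$ is not the one-vertex RRT, so $(T_s)_\inte \neq \emptyset$. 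If $r = 1$, then $n_1 \geq 2$ (as $\bn \notin \{(1), \bzero\}$), so $T_b$ has more than one vertex; its root has no outgoing edge, hence lies in $V_\comp \sqcup V_\mk$, but not in $V_\mk$ (a $V_\mk$-vertex has no incoming edge either, forcing $T_b$ to be a single vertex), so $\alpha_\root^{T_b} \in V_\comp(T_b)$; finally $f(\alpha_\root^{T_b}) = \alpha_\root^{T_s}$ is the unique vertex of $T_s = \bullet$, which has no incoming edge, while $f$ maps the incoming edges of any $V^{\geq2}_\comp$-vertex bijectively onto those of its image, so $\alpha_\root^{T_b} \notin V^{\geq2}_\comp(T_b)$, i.e.\ $\alpha_\root^{T_b} \in V^1_\comp(T_b)$.

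I expect the only delicate points to be keeping the solid/dashed edge bookkeeping and the three vertex-classes straight in fact (iii), and the $r = 1$ corner case of the upper bound, where stability alone does not suffice and one must use bijectivity of $f$ on the incoming edges at $V^{\geq2}_\comp$-vertices to exclude $\alpha_\root^{T_b} \in V^{\geq2}_\comp(T_b)$; everything else is routine substitution.
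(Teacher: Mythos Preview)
Your proof is correct and follows essentially the same approach as the paper: part (a) is obtained by substituting the seam-tree identity \eqref{eq:T_valence_sum} and the bubble-tree identity $\sum_{\beta\in V_\seam(T_b)}\#\incom(\beta)=\#V_\comp(T_b)+|\bn|-1$ (your fact (iii), which is the paper's \eqref{eq:2T_valence_sum}) into \eqref{eq:tree-pair_dim}, and part (b) uses termwise nonnegativity in \eqref{eq:2T_dim_reform} for the lower bound and \eqref{eq:tree-pair_dim} for the upper bound. Your write-up is in fact more careful than the paper's in two places: you give an explicit edge-count for fact (iii), and you spell out the $r=1$ case of the upper bound (where one must check $\#V^1_\comp(T_b)\geq 1$ via the coherence-map axiom), whereas the paper simply asserts the upper bound follows ``immediately'' from \eqref{eq:tree-pair_dim}.
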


\begin{proof}
\begin{itemize}
	\item[(a)] \eqref{eq:2T_dim_reform} is the result of substituting into \eqref{eq:tree-pair_dim} \eqref{eq:T_valence_sum} and the following identity:
	\begin{align}
	\label{eq:2T_valence_sum}
	\sum_{\alpha \in V_\seam(T_b)} \#\!\incom(\alpha) = \#\!V_\comp(T_b) + |\bn| - 1.
	\end{align}
	This follows by noting that $\sum_{\alpha \in V_\seam(T_b)} \#\!\incom(\alpha)$ counts the elements of $V_\comp(T_b)$ that are the incoming neighbor of some element of $V_\seam(T_b)$.
	This set is the complement of the root of $T_b$, hence has cardinality $\#\!V_\comp(T_b) + |\bn| - 1$.
	
	\item[(b)]
	The inequality $d(2T) \geq 0$ follows from \eqref{eq:2T_dim_reform} and the ({\sc stability}) axiom in the definition of stable tree-pairs; the inequality $d(2T) \leq |\bn| + r - 3$ follows immediately from \eqref{eq:tree-pair_dim}.
\end{itemize}
\end{proof}

Next we define three types of ``moves'' that can be performed on stable tree-pairs.
Examples of these moves are
shown in Ex.~\ref{ex:tree-pair_moves}.
As we will show, each move decreases the dimension $d$ by one.

Fix a stable tree-pair $T_b \sr{f}{\to} T_s$.
Here are the moves that may be applied:

\begin{itemize}
\item[] {\emph{Type-1 moves:}} Fix $\alpha \in V_\comp(T_b)$ and $\beta \in \incom(\alpha)$.
Choose a consecutive subset $(\gamma_{p+1},\ldots,\gamma_{p+l}) \subset (\gamma_1,\ldots,\gamma_k) = \incom(\beta)$ where $l$ satisfies the following condition (necessary to preserve stability):
	\begin{itemize}
	\item If $\#\!\incom(\alpha) = 1$, then we require $2 \leq l < k$.

	\item If $\#\!\incom(\alpha) \geq 2$, then we require $2 \leq l \leq k$.
	\end{itemize}
The corresponding \emph{type-1 move} consists of modifying the incoming edges of $\beta$ as shown here,

\begin{figure}[H]
\centering
\def\svgwidth{0.4\columnwidth}
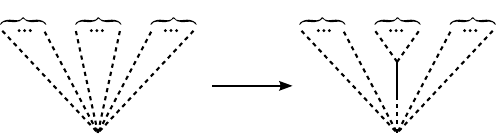
\label{fig:2T_move_1}
\end{figure}

\noindent leaving $T_s$ unchanged, and modifying $f$ in the obvious way.

\medskip

\item[] {\emph{Type-2 moves:}} Fix $\alpha \in V_\comp(T_s)$.
Choose a consecutive subset $(\gamma_{p+1},\ldots,\gamma_{p+l})\subset(\gamma_1,\ldots,\gamma_k)=\incom(\alpha)$ where $l$ satisfies $2 \leq l < k$.
For every $\wt\alpha \in V_\comp^{\geq 2}(T_b) \cap f^{-1}\{\alpha\}$ with $\incom\bigl(\wt\alpha\bigr) \eqqcolon (\beta_1,\ldots,\beta_k)$ and for every $i$ with $1 \leq i \leq l$, choose $q \geq 0$ and partition $\ba \coloneqq \bigl(\#\!\incom(\beta_{p+i})\bigr)_{i=1}^\ell$ as $\ba = \sum_{j=1}^q \bb^j$, for $\bb^1,\ldots,\bb^q \in \bZ^\ell\setminus\{\bzero\}$.
The corresponding \emph{type-2 move} consists of (a) \label{type2a} modifying the incoming edges of $\alpha$ as shown here,

\begin{figure}[H]
\centering
\def\svgwidth{0.35\columnwidth}
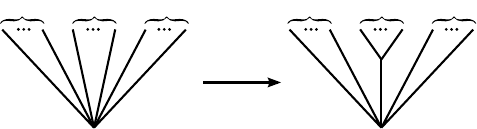
\label{fig:2T_move_2s}
\end{figure}

\noindent and (b) modifying the incoming edges of each $\wt \alpha$ as shown here,

\begin{figure}[H]
\centering
\def\svgwidth{0.7\columnwidth}
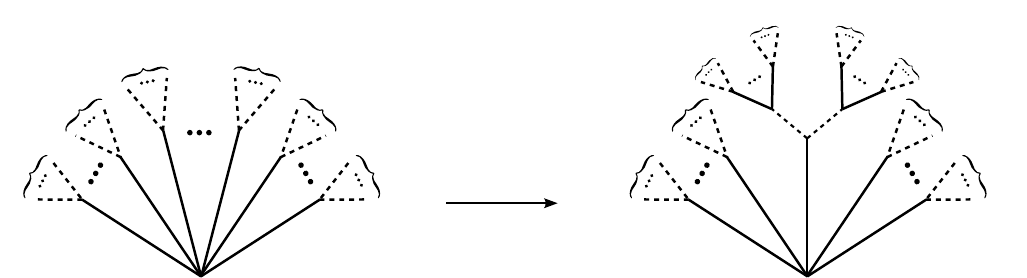
\label{fig:2T_move_2b}
\end{figure}

\noindent and modifying $f$ in the obvious way.

\medskip

\item[] {\emph{Type-3 moves:}} Choose $\alpha \in V^{\geq2}_\comp(T_b)$, and write $\incom(\alpha) \eqqcolon \{\beta_1,\ldots,\beta_k\}$.
Choose $q \geq 2$, and partition $\ba \coloneqq \bigl(\#\!\incom(\beta_i)\bigr)_{i=1}^k$ as $\ba = \sum_{j=1}^q \bb^j$ for $\bb^1,\ldots,\bb^q \in \bZ^k\setminus\{\bzero\}$.
The corresponding {\emph{type-3 move}} consists of modifying the incoming edges of $\alpha$ as shown here,

\begin{figure}[H]
\centering
\def\svgwidth{0.5\columnwidth}
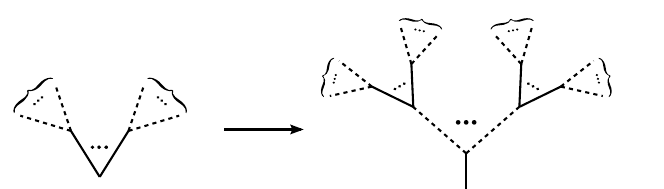
\label{fig:2T_move_3}
\end{figure}

\noindent leaving $T_s$ unchanged, and modifying $f$ in the obvious way.
\end{itemize}


\begin{example}
\label{ex:tree-pair_moves}
In the following figure, we illustrate the moves just introduced.
\begin{figure}[H]
\centering
\def\svgwidth{0.9\columnwidth}
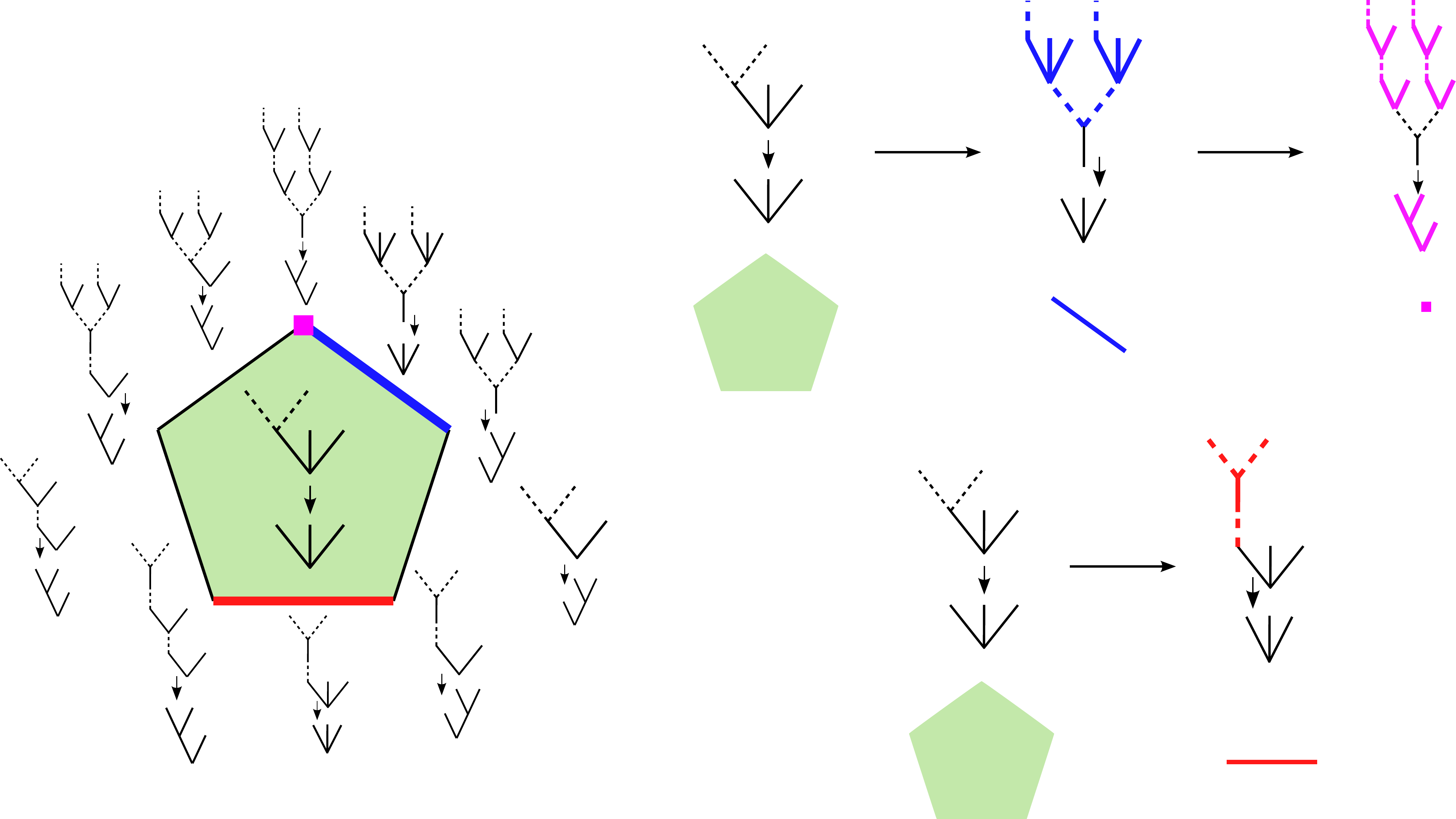
\label{fig:tree_move_examples}
\end{figure}
\noindent
On the left are the eleven tree-pairs comprising $W_{200}^\tree$ --- suggestively overlaid on a pentagon, which encodes the poset structure which is defined just below.
The the three moves which we illustrate here can be thought of as going from the interior of the pentagon to the bottom red edge, resp.\ the interior to the upper-right blue edge, resp.\ the upper-right blue edge to the top mauve vertex.
On the right we illustrate these moves, which are of type 1 resp.\ 3 resp.\ 2 (these numbers label the arrows corresponding to the moves).
We color the portions of the tree-pairs which have been altered by the move.
\null\hfill$\triangle$
\end{example}


\begin{lemma}
\label{lem:Wn_moves}
If $2T'$ is the result of making a move of type 1, 2, or 3 to $2T$, then $d(2T') = d(2T)-1$.
\end{lemma}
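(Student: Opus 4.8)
The plan is to read the change in dimension straight off the defining formula \eqref{eq:tree-pair_dim}, namely $d(2T) = |\bn| + r - \#\!V^1_\comp(T_b) - \#\!(T_s)_\inte - 2$. First I would observe that each of the three moves preserves the type: no marked point or leaf of $T_s$ is created or destroyed, and $f$ is only ``modified in the obvious way'', so $|\bn|$ and $r$ agree for $2T$ and $2T'$. It therefore suffices to show that every move increases the quantity $\#\!V^1_\comp(T_b) + \#\!(T_s)_\inte$ by exactly one.

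Then I would treat the three move types in turn. For a type-1 move, $T_s$ is untouched, and the move acts on a seam vertex $\beta \in \incom(\alpha)$ (which lies in $V_\seam$, since $\alpha \in V_\comp$ has only solid incoming edges) by pulling a consecutive block of $\incom(\beta)$ onto a fresh seam vertex $\beta'$ whose solid outgoing edge terminates at a newly created vertex with exactly one incoming edge; that new vertex is the only change to $V^1_\comp(T_b)$, which thus grows by one. For a type-2 move, the modification of $T_s$ is an RRT-move, which by (the proof of) Lemma~\ref{lem:Kr_move_dim} adds one element to $(T_s)_\inte$; and the accompanying modification of $T_b$ replaces, at each $\wt\alpha \in V^{\geq2}_\comp(T_b) \cap f^{-1}\{\alpha\}$, a block of $l \geq 2$ incoming seams by a single new seam vertex carrying $q$ new component vertices, each with $l$ incoming seams, so every new component vertex lies in $V^{\geq2}_\comp(T_b)$ while $\wt\alpha$ retains $\geq 2$ incoming seams; hence $\#\!V^1_\comp(T_b)$ is unchanged. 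For a type-3 move, $T_s$ is again untouched, and at the chosen vertex $\alpha \in V^{\geq2}_\comp(T_b)$ the $k \geq 2$ incoming seams are replaced by a single new seam vertex carrying $q \geq 2$ new component vertices, each with $k$ incoming seams; the new component vertices all lie in $V^{\geq2}_\comp(T_b)$, while $\alpha$, now having a single incoming edge, passes from $V^{\geq2}_\comp(T_b)$ into $V^1_\comp(T_b)$, so $\#\!V^1_\comp(T_b)$ again grows by one. In all three cases $\#\!V^1_\comp(T_b) + \#\!(T_s)_\inte$ increases by one, giving $d(2T') = d(2T) - 1$. The same conclusion can alternatively be extracted termwise from \eqref{eq:2T_dim_reform} by a short computation of the kind in the proof of Lemma~\ref{lem:Wn_dim}(a).

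The content here is pure bookkeeping; the one delicate point is to pin down precisely how each move alters the partition $V_\comp(T_b) = V^1_\comp(T_b) \sqcup V^{\geq2}_\comp(T_b)$ --- in particular that a type-3 move flips exactly the vertex $\alpha$ out of $V^{\geq2}_\comp(T_b)$ and into $V^1_\comp(T_b)$ while depositing all of its other newly created component vertices in $V^{\geq2}_\comp(T_b)$, and that a type-2 move introduces no new $V^1_\comp(T_b)$ vertex whatsoever. Once this is settled for each move, the lemma is immediate from \eqref{eq:tree-pair_dim}.
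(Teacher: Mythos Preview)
Your argument is correct and is exactly the approach the paper takes: track the change in $\#V^1_\comp(T_b)$ and $\#(T_s)_\inte$ for each move type and read off the result from \eqref{eq:tree-pair_dim}. The paper's proof is terser---it simply asserts that a type-1 or type-3 move creates one new ``1-seam component'' while leaving $T_s$ unchanged, and that a type-2 move creates one new interior vertex of $T_s$ and no new 1-seam component---but your more detailed bookkeeping (especially the observation that in a type-3 move $\alpha$ itself migrates from $V^{\geq2}_\comp$ to $V^1_\comp$) spells out precisely what underlies those assertions.
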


\begin{proof}
Recall the definition of $d$:
\begin{align}
|\bn| + r - \#\!V_\comp^1(T_b) - \#\!(T_s)_\inte - 2.
\end{align}
\begin{itemize}
\item In a type-1 move, one 1-seam component forms (where the points collide) and $T_s$ does not change.

\item In a type-2 move, no new 1-seam components form; one new interior vertex in $T_s$ forms.

\item In a type-3 move, one 1-seam component forms (the former $k$-seam component) and the seam tree does not change.
\end{itemize}
\end{proof}

\begin{definition}
\label{def:Wn_tree}
Define $W_\bn^\tree$ as a poset by declaring $2T' < 2T$ if there is a finite sequence of moves that transforms $2T$ into $2T'$.
The poset structure is well-defined by the same argument as in Def.-Lem.~\ref{deflem:Krtree_poset}.
\null\hfill$\triangle$
\end{definition}

\begin{lemma}
\label{lem:WnKn}
For any $n \geq 1$, there is a poset isomorphism $W_n^\tree \simeq K_n^\tree$.
There are also isomorphisms $W_{n,0}^\tree \simeq J_n \simeq W_{0,n}^\tree$, where $J_n$ is the $(n-1)$-dimensional multiplihedron.
\end{lemma}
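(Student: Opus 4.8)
The plan is to treat the two assertions separately, in each case following the strategy of Prop.~\ref{prop:Kr_iso}: exhibit an explicit bijection of underlying sets, then check that it intertwines the moves that generate the two poset structures.

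\emph{The isomorphism $W_n^\tree \simeq K_n^\tree$.} Take $\bn = (n)$, so $r = 1$ and the seam tree is forced to be the unique element $\bullet \in K_1^\tree$. First I would extract the structural consequences of $r=1$. Since $\bullet$ has no incoming edges and since, for any $\alpha \in V_\comp^{\geq 2}(T_b)$, the coherence map $f$ sends the incoming edges of $\alpha$ bijectively onto those of $f(\alpha)$, there can be no vertex in $V_\comp^{\geq 2}$, so $V_\comp = V_\comp^1$; also $(T_s)_\inte = \varnothing$. Hence (i) every $\alpha \in V_\comp$ has exactly one solid incoming edge and every $\beta \in V_\seam$ exactly one solid outgoing edge, so the solid edges form a perfect matching $V_\comp \leftrightarrow V_\seam$, and (ii) the only moves available on a stable tree-pair of type $(n)$ are type-$1$ moves. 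I would then define $\Psi\colon W_n^\tree \to K_n^\tree$ by contracting all solid edges: the surviving vertices are $V_\mk \sqcup (V_\comp \cong V_\seam)$, the ribbon structure descends, the vertices of $V_\mk$ become exactly the $n$ leaves, and each merged comp/seam vertex inherits the $\geq 2$ dashed incoming edges of the seam --- which is precisely the stability hypothesis on $V_\seam$ --- so $\Psi(2T) \in K_n^\tree$. The inverse $\Phi$ expands each interior vertex $v$ of an RRT into a $V_\comp^1$-vertex carrying a single $V_\seam$-child whose incoming edges are those of $v$, and turns each leaf into an element of $V_\mk$; checking $\Phi\Psi = \id$, $\Psi\Phi = \id$ is routine. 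Finally, in the case $\#\!\incom(\alpha) = 1$ a type-$1$ move carries exactly the constraint $2 \leq l < k$, and under $\Psi$ it becomes exactly a move on RRTs as in Def.~\ref{def:Krtree_set}; since these are the only generating moves on each side, $2T' < 2T \iff \Psi(2T') < \Psi(2T)$, so $\Psi$ is a poset isomorphism. (The case $n=1$ is the trivial check $\bullet \to \bullet \;\leftrightarrow\; \bullet$.)

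\emph{The isomorphisms $W_{(n,0)}^\tree \simeq J_n \simeq W_{(0,n)}^\tree$.} Now $r = 2$ and $T_s$ is the unique element of $K_2^\tree$, the $2$-leaf corolla with root $\rho$; since $\#\!\incom(\rho) = 2$, a type-$2$ move at $\rho$ would require $2 \leq l < 2$, so no type-$2$ move is ever possible, $T_s$ is constant over the whole poset, and only type-$1$ and type-$3$ moves occur. I would then recall the combinatorial model of the multiplihedron $J_n$ as the poset of painted (two-colored) stable ribbon trees with $n$ leaves --- equivalently, of degenerations of an $n$-marked quilted disk --- and set up a bijection $W_{(n,0)}^\tree \to J_n$: the seam of a component lying over the $\lambda_1$-edge of $T_s$, together with everything above it, records the unpainted part (the operations applied before the functor), while the components in $V_\comp^{\geq 2}(T_b)$ together with their $\lambda_2$-seams and the solid subtrees they carry record the painted part (the functor component and the operations after it), and the $n$ elements of $V_\mk$ are the leaves. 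After checking this is a bijection, one verifies that type-$1$ moves correspond to collapses within the unpainted part and type-$3$ moves to collapses within the painted part, matching the two families of covering relations of $J_n$; this gives $W_{(n,0)}^\tree \simeq J_n$. For $W_{(0,n)}^\tree$ I would either run the mirror-image argument, or invoke the reflection symmetry of stable tree-pairs that reverses all ribbon orderings and the order of the two seams, which identifies $W_{(0,n)}^\tree$ with $W_{(n,0)}^\tree$.

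The first isomorphism is essentially bookkeeping once one notices that ``contract the solid edges'' is the right map. \textbf{The main obstacle is the multiplihedron identification:} pinning down a convenient combinatorial model of $J_n$, correctly locating the painted/unpainted split inside a type-$(n,0)$ tree-pair (in particular understanding how the one-vertex seam tree constrains the shape of the $V_\comp^{\geq 2}(T_b)$ part), and --- most delicately --- checking that type-$1$ and type-$3$ moves biject with the covering relations of $J_n$ with nothing missing and nothing doubled. A clean way to organize this is to note, via Def.~\ref{def:tree-pair_dim} and Lemma~\ref{lem:Wn_dim}, that $d(2T) = n - 1 - \#\!V_\comp^1(T_b)$, which matches $\dim J_n = n-1$, and then to argue by induction on $\#\!V_\comp^1(T_b)$ (peeling off one plain component, i.e.\ one unpainted corolla, at a time).
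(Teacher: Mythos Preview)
Your treatment of the first isomorphism is exactly the paper's: contract all solid edges to pass from $W_n^\tree$ to $K_n^\tree$, with inverse given by inserting a solid edge at each interior vertex; the paper records this in one sentence and an illustrative figure, and you have simply unpacked why it works (the observations $V_\comp^{\geq 2}=\varnothing$, $(T_s)_\inte=\varnothing$, and hence only type-$1$ moves occur, are the right structural facts).

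For the multiplihedron identifications the paper does not give an argument at all --- it defers entirely to Remark~4.4 of \cite{bw:compactness}. Your sketch via the painted-tree model of $J_n$ is the standard route and is more substantive than what the paper itself supplies. Your structural reductions for $r=2$ (no type-$2$ moves since $2\leq l<2$ is vacuous; dimension formula $d(2T)=n-1-\#V_\comp^1$) are correct, and your honest flag that matching type-$1$/type-$3$ moves against the covering relations of $J_n$ is the delicate step is fair. If you want to firm this up without chasing the external reference, the cleanest organizing principle is the one you already suggest: peel off components of $V_\comp^1$ one at a time and track the corresponding unpainted corolla.
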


\begin{proof}
Define a map $W_n^\tree \to K_n^\tree$ like so: given a stable tree-pair $T_b \sr{f}{\to} T_s$, send it to the result of collapsing all the solid edges in $T_b$, then converting all the dashed edges to solid ones.
A straightforward check shows that this map is well-defined and respects the partial order.
An inverse $K_n^\tree \to W_n^\tree$ is given like so: given a stable RRT $T$, convert its edges to dashed ones, then insert a solid edge at every interior vertex.
Here is an illustration of this correspondence, where the stable tree-pair on the left lies in $W_6^\tree$ and the RRT on the right lies in $K_6^\tree$:
\begin{figure}[H]
\centering
\def\svgwidth{0.4\columnwidth}
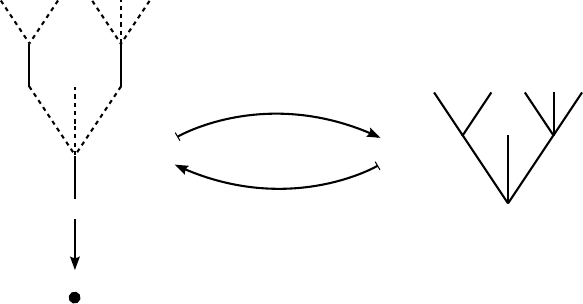
\end{figure}

The identifications $W_{n,0}^\tree \simeq J_n \simeq W_{0,n}^\tree$ are explained in Rmk.~4.4 of version 2 of \cite{bw:compactness}.
\end{proof}

\begin{remark}
There are also poset isomorphisms $W^\tree_{\tiny \underbrace{0,\ldots,0,1,0,\ldots,0}_n} \simeq K^\tree_n$; we leave the details of these isomorphisms to the reader.
\null\hfill$\triangle$
\end{remark}

\subsection{The poset \texorpdfstring{$W_\bn^\br$}{Wnbr} of 2-bracketings}
\label{ss:Wnbr_construction}

We now define the notion of a 2-bracketing, which will allow us to define the model $W_\bn^\br$.
These definitions are somewhat opaque, so we give some motivation after the definitions.

\begin{definition}
\label{def:2bracket}
A \emph{2-bracket of $\bn$} is a pair $\btB = (B, (2B_i))$\label{p:btB} consisting of a 1-bracket $B \subset \{1,\ldots,r\}$ and a consecutive subset $2B_i \subset \{1,\ldots,n_i\}$ for every $i \in B$ such that at least one $2B_i$ is nonempty.
We write $\btB' \subset \btB$ if $B' \subset B$ and $2B_i' \subset 2B_i$ for every $i \in B'$, and we define $\pi(B,(2B_i)) \coloneqq B$.
\null\hfill$\triangle$
\end{definition}

\begin{definition}
\label{def:Wn_br}
A \emph{2-bracketing of $\bn$} is a pair $(\sB, \stB)$,\label{p:sBstB} where $\sB$ is a 1-bracketing of $r$ and $\stB$ is a collection of 2-brackets of $\bn$ that satisfies these properties:
\begin{itemize}
\item[] {\sc (1-bracketing)} For every $\btB \in \stB$, $\pi(\btB)$ is contained in $\sB$.

\medskip

\item[] {\sc (2-bracketing)}
Suppose that $\btB, \btB'$ are elements of $\stB$, and that for some $i_0 \in \pi(\btB) \cap \pi(\btB')$, the intersection $2B_{i_0} \cap 2B_{i_0}'$ is nonempty.
Then either $\btB \subset \btB'$ or $\btB' \subset \btB$.

\medskip

\item[] {\sc (root and marked points)} $\stB$ contains $(\{1,\ldots,r\},(\{1,\ldots,n_1\},\ldots,\{1,\ldots,n_r\}))$ and every 2-bracket of $\bn$ of the form $(\{i\},(\{j\}))$.
\end{itemize}

\noindent For any $B_0 \in \sB$, write $\stB_{B_0} \coloneqq \{(B,(2B_i)) \in \stB \:|\: B = B_0\}$.

\begin{itemize}
\item[] {\sc (marked seams are unfused)}
\begin{itemize}
\item For any $B_0 \in \sB$ and for any $i \in B_0$, we have $\bigcup_{\btB \in \stB_{B_0}} 2B_i = \{1,\ldots,n_i\}$.

\item For every $\btB \in \stB_{B_0}$ for which there exists $\btB' \in \stB_{B_0}$ with $\btB' \subsetneq \btB$, and for every $i \in B_0$ and $j \in 2B_i$, there exists $\btB'' \in \stB_{B_0}$ with $\btB'' \subsetneq \btB$ and $2B''_i \ni j$.
\end{itemize}

\medskip

\item[] {\sc (partial order)} For every $B_0 \in \sB$, $\stB_{B_0}$ is endowed with a partial order with the following properties:
\begin{itemize}
\item $\btB, \btB' \in \stB_{B_0}$ are comparable if and only if $2B_i \cap 2B_i' = \emptyset$ for every $i \in B_0$.

\item For any $i$ and $j<j'$, we have $(\{i\},(\{j\})) < (\{i\},(\{j'\}))$.

\item For any 2-brackets $\btB^j \in \stB_{B_0}, \wt{\btB}^j \in \stB_{\wt B_0}, j \in \{1,2\}$ with $\wt{\btB^j}\subset \btB^j$, we have the implication
\begin{align}
\btB^1 < \btB^2 \implies \wt{\btB}^1 < \wt{\btB}^2.
\end{align}
\end{itemize}
\end{itemize}
We define $W_\bn^\br$ to be the set of 2-bracketings of $\bn$, with the poset structure defined by declaring $(\sB',\stB') < (\sB,\stB)$ if the containments
$\sB' \supset \sB$,
$\stB' \supset \stB$
hold and at least one of these containments is proper. \label{p:Wnbr}
\null\hfill$\triangle$
\end{definition}


\begin{example}
\label{ex:2bracketing_examples}
Define a 2-bracketing $(\sB,\stB) \in W_{11410}^\br$ like so:
\begin{align}
\sB \coloneqq &\{(1),(2),(3),(4),(5),(1,2),(3,4,5),(1,2,3,4,5)\},
\\
\stB \coloneqq &\Bigl\{\bigl((1),((a))\bigr),
\bigl((2),((b))\bigr),
\bigl((3),((c))\bigr),
\bigl((3),((d))\bigr),
\bigl((3),((e))\bigr),
\bigl((3),((f))\bigr),
\bigl((4),((g))\bigr),
\nonumber\\
&\quad \bigl((1,2),((a),())\bigr),
\bigl((1,2),((),(b))\bigr),
\bigl((1,2),((a),(b))\bigr),
\nonumber\\
&\quad \bigl((3,4,5),((c),(g),())\bigr),
\bigl((3,4,5),((d),(),())\bigr),
\bigl((3,4,5),((f,e),(),())\bigr),
\nonumber\\
&\quad \bigl((1,2,3,4,5),((a),(b),(c),(g),())\bigr),
\bigl((1,2,3,4,5),((),(),(f,e,d),(),())\bigr),
\nonumber\\
&\hspace{2.75in}
\bigl((1,2,3,4,5),((a),(b),(f,e,d,c),(g),())\bigr),
\nonumber
\end{align}
subject to the partial orders defined by the following relations:
\begin{gather}
\bigl((3),((f))\bigr) < \bigl((3),((e))\bigr) < \bigl((3),((d))\bigr) < \bigl((3),((c))\bigr),
\\
\bigl((1,2),((a),())\bigr) < \bigl((1,2),((),(b))\bigr),
\nonumber\\
\bigl((3,4,5),((f,e),(),())\bigr) < \bigl((3,4,5),((d),(),())\bigr) < \bigl((3,4,5),((c),(g),())\bigr),
\nonumber\\
\bigl((1,2,3,4,5),((),(),(f,e,d),(),())\bigr) < \bigl((1,2,3,4,5),((a),(b),(c),(g),())\bigr).
\nonumber
\end{gather}
Here we have denoted each 2-bracket $(B,(2B_i))$ to have $2B_i$ a subsequence of $(a)$, $(b)$, $(f,e,d,c)$, or $(g)$, for $i = 1,2,3,4,5$; this alternate notation is easier to parse in examples.

The presentation of $(\sB,\stB)$ just given is obviously cumbersome.
It is more convenient to depict 2-bracketings in the following pictorial format:
\begin{figure}[H]
\centering
\def\svgwidth{0.15\columnwidth}
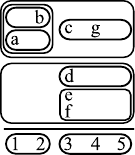
\label{fig:2-bracketing_example}
\end{figure}
\noindent Here the 1-brackets in $\sB$ are shown on the bottom row.
The 2-brackets are shown above the dividing line.
It is important to note that the 2-brackets come with a width, which indicates the 1-brackets they map to under $\pi$, and this is incorporated in the picture.
Moreover, the partial orders are reflected like so: for $\btB_1, \btB_2 \in \stB_{B_0}$, the inequality $\btB_1>\btB_2$ holds if and only if $\btB_1$ appears above $\btB_2$.
We have not depicted the 1-bracket $(1,2,3,4,5)$, the 2-bracket $\bigl((1,2,3,4,5),((a),(b),(f,e,d,c),(g),())\bigr)$, or the 1-brackets resp.\ 2-brackets of the form $(i)$ resp.\ $\bigl((i),((j))\bigr)$: these must be included in $(\sB,\stB)$ according to the $\textsc{(roots and leaves)}$ axiom, so it would not add any information to include them in the picture.
\null\hfill$\triangle$
\end{example}

\begin{example}
In this example, we discuss several invalid variants of the (valid) 2-bracketing from the last example.
Consider the five supposed 2-bracketings in the following figure:
\begin{figure}[H]
\centering
\def\svgwidth{0.875\columnwidth}
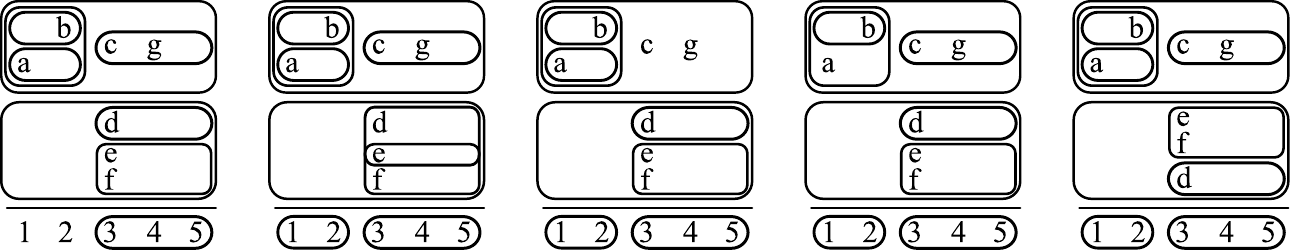
\label{fig:2-bracketing_nonexamples}
\end{figure}
Here is why these are invalid 2-bracketings, from left to right:
\begin{itemize}
\item We have deleted $(1,2)$ from $\sB$.
As a result, \textsc{(1-bracketing)} is not satisfied.

\item We have modified $\stB$ by replacing $\bigl((3,4,5),((d),(),())\bigr)$ with $\bigl((3,4,5),((e,d),(),())\bigr)$.
As a result, \textsc{(2-bracketing)} is not satisfied.

\item Here we have removed the 2-bracket $\bigl((3,4,5),((c),(g),())\bigr)$ from $\stB$.
As a result, we have
\begin{align}
\bigcup_{\btB \in \stB_{(3,4,5)}} 2B_3 = (f,e,d) \subsetneq (f,e,d,c),
\qquad
\bigcup_{\btB \in \stB_{(3,4,5)}} 2B_4 = () \subsetneq (g),
\end{align}
so the first part of \textsc{(marked seams are unfused)} is violated.

\item We have removed $\bigl((1,2),((a),())\bigr)$ from $\stB$.
This violates the second part of \textsc{(marked seams are unfused)}: in the notation of that condition, set $\btB \coloneqq \bigl((1,2),((a),(b))\bigr)$, $\btB' \coloneqq \bigl((1,2),((),(b))\bigr)$, $i \coloneqq 1$, and $j \coloneqq a$.

\item In the fifth non-example, we have modified the partial order on $\stB_{(1,2,3,4,5)}$ by declaring
\begin{align}
\bigl((3,4,5),((d),(),())\bigr) < \bigl((3,4,5),((f,e),(),())\bigr) < \bigl((3,4,5),((c),(g),())\bigr).
\end{align}
This, together with the inequality $\bigl((3),((e))\bigr) < \bigl((3),((d))\bigr)$, contradicts the third part of \textsc{(partial order)}.
\end{itemize}
\null\hfill$\triangle$
\end{example}


\begin{remark}[Motivation for the definition of 2-bracketings]
Recall from \S\ref{ss:motivation} that a 2-bracketing is intended to indicate the bubbling structure of a nodal witch curve.
That is, the 1-bracketing $\sB$ indicates how the seams have collided; each 2-bracket $\btB \in \stB$ corresponds to a bubble in the nodal witch curve, with $\pi(\btB)$ indicating the seams present on the bubble and the fashion in which these seams have collided, and $2B_i$ indicating the marked points which appear on the $i$-th seam and which are either on the present bubble or appear above this bubble (i.e., further from the main component).
The properties {\sc (1-bracketing)}, {\sc (2-bracketing)}, and {\sc (root and marked points)} are straightforward enough: {\sc (1-bracketing)} says that the collisions of seams on the various bubbles in the tree are controlled by a single 1-bracketing; {\sc (2-bracketing)} says that if a single marked point appears above two different bubbles, then one of these bubbles must be above the other; and {\sc (root and marked points)} is a consequence of the fact that the root corresponds to the 2-bracket $(\{1,\ldots,r\},(\{1,\ldots,n_1\},\ldots,\{1,\ldots,n_r\}))$ and that the $j$-th marked point on the $i$-th seam corresponds to the 2-bracket $(\{i\},(\{j\}))$.
{\sc (marked seams are unfused)} guarantees that marked points may only appear on unfused seams, which is a result of the fact that in our putative compactification $\ol{2\cM}_\bn$, when a collection of seams collide, wherever there is a marked point at the instant of this collision, a bubble must form.
Finally, {\sc (partial order)} reflects the fact that in a bubble tree, on each seam of each bubble there is an ordering of the marked and nodal points.
\null\hfill$\triangle$
\end{remark}

\subsection{We prove that \texorpdfstring{$W_\bn^\tree$}{Wntree}, \texorpdfstring{$W_\bn^\br$}{Wnbr} coincide}
\label{ss:Wn_iso}


In this subsection we finally define $W_\bn$, as well as the forgetful map $W_\bn \to K_r$.

\begin{deflem}
\label{def:Wn}
\label{p:forgetful}
Define $W_\bn \coloneqq W_\bn^\tree = W_\bn^\br$. \label{p:Wn}
The forgetful map $\pi\colon W_\bn \to K_r$ is defined in the two models like so: $\pi^\tree\colon W_\bn^\tree \to K_r^\tree$ sends $T_b \sr{f}{\to} T_s$ to $T_s$, and $\pi^\br\colon W_\bn^\br \to K_r^\br$ sends $(\sB,\stB)$ to $\sB$.
\end{deflem}

\begin{proof}
We need to show that $W_\bn^\tree$ and $W_\bn^\br$ are isomorphic posets, and that this isomorphism intertwines $\pi^\tree$ and $\pi^\br$.
The isomorphism $W_\bn^\tree \simeq W_\bn^\br$ is exactly the content of Thm.~\ref{thm:iso} below, and it is evident from the definition of this isomorphism that the two forgetful maps are intertwined.
\end{proof}


We now turn to the proof of the main theorem of this section.

\begin{theorem}[Equivalence of the two models for $W_\bn$]
\label{thm:iso}
For any $r\geq 1$ and $\bn \in \bZ_{\geq0}^r\setminus\{\bzero\}$, $W_\bn^\tree$ and $W_\bn^\br$ are isomorphic posets.
\end{theorem}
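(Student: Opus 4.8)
The plan is to follow the template of Prop.~\ref{prop:Kr_iso}: I will build a map $2\nu\colon W_\bn^\tree \to W_\bn^\br$, a putative inverse $2\tau\colon W_\bn^\br \to W_\bn^\tree$, check that they are mutually inverse, and check that both respect the partial orders; then $W_\bn \coloneqq W_\bn^\tree = W_\bn^\br$ is well-defined and the forgetful maps are visibly intertwined. The map $2\nu$ is the direct generalization of $\nu$. Given $2T = T_b \sr{f}{\to} T_s$, set $\sB \coloneqq \nu(T_s)$, and for each $\alpha \in V_\comp(T_b)$ put
\[
\btB(\alpha) \coloneqq \bigl(B(f(\alpha)),\,(2B_i(\alpha))_{i\in B(f(\alpha))}\bigr), \qquad 2B_i(\alpha) \coloneqq \{\, j \mid \mu_{ij}^{T_b} \in (T_b)_\alpha\,\},
\]
where $B(f(\alpha))$ is the $T_s$-bracket at $f(\alpha)$ (Def.~\ref{def:T_bracket}); let $\stB \coloneqq \{\btB(\alpha) \mid \alpha \in V_\comp(T_b)\}$, equip each $\stB_{B_0}$ with the partial order inherited from the cyclic orders of the ribbon tree $T_b$, and define $2\nu(2T) \coloneqq (\sB,\stB)$.

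Verifying that $(\sB,\stB)$ is a $2$-bracketing (Def.~\ref{def:Wn_br}) is mostly routine axiom-chasing. {\sc (1-bracketing)} and {\sc (root and marked points)} are immediate from the structure of $f$ and $V_\mk(T_b)$; that each $\btB(\alpha)$ is genuinely a $2$-bracket---in particular that every $2B_i(\alpha)$ is consecutive---uses the ribbon structure of $T_b$; {\sc (2-bracketing)} follows from the tree axioms of $T_b$, exactly as in Step~1 of Prop.~\ref{prop:Kr_iso}; the two clauses of {\sc (marked seams are unfused)} encode precisely the ({\sc stability}) axiom of Def.~\ref{p:2T} together with the fact that no element of $V_\mk$ is attached to a fused seam; and the three clauses of {\sc (partial order)} transcribe the ribbon structure of $T_b$ and its compatibility with $f$.

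For the reverse direction I first need an analogue of Lemma~\ref{lem:RRT_in} characterizing bubble trees in terms of incoming-neighbour data partitioned as $V_\comp \sqcup V_\seam \sqcup V_\mk$. Given $(\sB,\stB)$ I take $V_\comp(T_b) \coloneqq \stB$ (with $\alpha(\btB)$ the vertex attached to $\btB$), adjoin one $V_\seam$-vertex for each seam that must appear on each disk---read off from $\sB$ and the widths $\pi(\btB)$---and adjoin $V_\mk$-vertices $\mu_{ij}^{T_b}$ for $1\leq i\leq r$, $1\leq j\leq n_i$, each attached to the $\subsetneq$-minimal $\btB \in \stB$ containing $j$ in its $i$-th slot; the incoming-neighbour relation and the solid/dashed dichotomy are dictated by the inclusions among $2$-brackets, the partial orders on the $\stB_{B_0}$, and the refinement pattern of the widths. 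Setting $T_s \coloneqq \tau(\sB)$ and $f(\alpha(\btB)) \coloneqq \alpha(\pi(\btB))$, extended to seam- and marked-point-vertices in the unique way compatible with the contraction rules of Def.~\ref{p:2T}, yields a candidate tree-pair whose stability reduces---via the bubble-tree analogue of Lemma~\ref{lem:RRT_in}---to the tree axioms and to ({\sc stability}), i.e.\ to {\sc (marked seams are unfused)}. Mutual invertibility of $2\nu$ and $2\tau$ is then a combinatorial identification parallel to Step~3 of Prop.~\ref{prop:Kr_iso}: on seam trees it is $\nu\circ\tau = \id$ and $\tau\circ\nu = \id$, and on the bubble-tree data it is the statement that a disk is recovered from its width together with the set of marked points lying above it, and conversely.

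For monotonicity I will prove the tree-pair analogue of Lemma~\ref{lem:RRT_ll}: every surjective homomorphism of stable tree-pairs arises from a finite sequence of moves of types 1, 2, and 3 (Def.~\ref{def:Wn_tree}), the homomorphism being the contraction of the newly-introduced edges. Granting this, $2\tau$ is monotone because $(\sB',\stB') < (\sB,\stB)$---i.e.\ $\sB'\supset\sB$ and $\stB'\supset\stB$---evidently induces a surjective homomorphism $2\tau(\sB',\stB')\to 2\tau(\sB,\stB)$, and $2\nu$ is monotone because each type-1 and type-3 move enlarges $\stB$ while fixing $\sB$, and each type-2 move enlarges both $\stB$ and $\sB$. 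Beyond the routine axiom-chasing for $2\nu$, the genuinely delicate steps are (i) reconstructing the $V_\seam$-vertices and the solid/dashed structure of $T_b$ in the definition of $2\tau$ from $\sB$, the widths, and the partial orders alone---this is where the $1$- and $2$-bracketing data interact most intricately---and (ii) the analogue of Lemma~\ref{lem:RRT_ll}, whose type-2 case is the crux, since a type-2 move alters $T_b$ and $T_s$ at once and one must check this is compatible with the requirement that $f$ restrict to a bijection on the incoming edges of each vertex of $V_\comp^{\geq2}$. I expect (ii) to be the main technical hurdle, and would isolate it as a separate lemma before the proof proper.
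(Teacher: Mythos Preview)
Your plan is correct and closely parallels the paper's proof, which constructs $2\nu$ (Def.-Lem.~\ref{deflem:Wn_models_iso}) and its inverse $2\tau$ exactly as you outline, then handles monotonicity separately (Lemma~\ref{lem:Wn_models_orders}). Two small corrections: first, your $\stB$ should also include the $2T$-brackets coming from $\alpha \in V_\mk(T_b)$, not just $V_\comp(T_b)$, since the singleton $2$-brackets $(\{i\},(\{j\}))$ required by {\sc(root and marked points)} arise only from the $\mu_{ij}^{T_b}$; correspondingly, $V_\comp$ in your $2\tau$ should be $\stB$ with these singletons removed. Second, the paper makes the partial order on each $\stB_{B_0}$ explicit via the meeting vertex $\alpha(\alpha_\root^{T_b},\wt\beta_1,\wt\beta_2)$ and proves it lies in $V_\seam(T_b)$---you will want this too.

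The one genuine divergence is in the monotonicity argument. You propose a tree-pair analogue of Lemma~\ref{lem:RRT_ll}, factoring an arbitrary surjective homomorphism of stable tree-pairs into moves; this is workable but requires you to set up the notion of tree-pair homomorphism and to handle the type-$2$ case, where a single move contracts one edge of $T_s$ but simultaneously restructures every $\wt\alpha \in V_\comp^{\geq2}(T_b)\cap f^{-1}\{\alpha\}$, so an edge-by-edge induction in $T_b$ alone does not suffice. The paper instead sidesteps all of this by working entirely in the bracketing model: given a proper containment $(\sB^1,\stB^1)\subsetneq(\sB^2,\stB^2)$, it produces by a short three-case analysis an intermediate $2$-bracketing differing from one end by a single move (adding one $2$-bracket for type~$1$; adding a maximal antichain below some $\btB$ for type~$3$; deleting one $B$ and the corresponding $\stB_B$ for type~$2$), and inducts. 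This is shorter and avoids introducing tree-pair homomorphisms at all. Your route is more faithful to the associahedron template and would yield a reusable lemma, but the paper's route is the quicker path to the theorem.
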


\begin{proof}
We construct a bijection $2\nu\colon W_\bn^\tree \to W_\bn^\br$ in Def.-Lem.~\ref{deflem:Wn_models_iso} and show that it respects the partial orders in Lemma~\ref{lem:Wn_models_orders}.
\end{proof}

The notion of a $2T$-bracket will be central in the construction of $2\nu\colon W_\bn^\tree \to W_\bn^\br$ in Def.-Lem.~\ref{deflem:Wn_models_iso}.


\begin{definition}
\label{def:2T_bracket}
Fix a stable tree-pair $2T = T_b \sr{f}{\to} T_s \in W_\bn^\tree$.
A \emph{$2T$-bracket} is a 2-bracket $\btB = (B,(2B_i))$ of $\bn$ with the property that for some $\alpha \in V_\comp(T_b) \sqcup V_\mk(T_b)$, we have
\begin{align}
B = B(f(\alpha)), \quad 2B_i = \bigl\{j \:|\: \mu_{ij}^{T_b} \in (T_s)_\alpha\bigr\}.
\end{align}
We denote this bracket by $\btB(\alpha) = (B(f(\alpha)),(2B_i(\alpha)))$.\label{p:btBalpha}
\null\hfill$\triangle$
\end{definition}

\noindent Note that {\sc(stability)} implies that $\alpha \in V_\comp(T_b) \sqcup V_\mk(T_b)$ is uniquely determined by $\btB(\alpha)$.
We denote the vertex corresponding to $\btB$ by $\alpha(\btB)$. \label{p:alpha_2B}

With this preparation, we are now ready to define the bijection $2\nu\colon W_\bn^\tree \to W_\bn^\br$.
This definition is rather technical, so we advise the reader to consult Ex.~\ref{ex:2nu_example} while reading this definition.

\begin{deflem}
\label{deflem:Wn_models_iso}
\label{p:2nu}
Define a map $2\nu\colon W_\bn^\tree \to W_\bn^\br$ to send a stable tree-pair $2T = T_b \sr{f}{\to} T_s$ to $(\sB(2T),\stB(2T))$, where $\sB(2T) \coloneqq \nu(T_s)$ and where $\stB(2T)$ is the set of $2T$-brackets, with the partial order on $\stB(2T)_{B(\beta)}, \beta \in T_s$ defined like so: fix $\wt\beta_1, \wt\beta_2 \in (V_\comp(T_b) \sqcup V_\mk(T_b)) \cap f^{-1}\{\beta\}$.
If $\btB(\wt\beta_1), \btB(\wt\beta_2)$ have $2B_i(\wt\beta_1) \cap 2B_i(\wt\beta_2) \neq \emptyset$ for some $i \in B(\beta)$, then define $\btB(\wt\beta_1), \btB(\wt\beta_2)$ to be incomparable.
Otherwise, define \label{p:alpha_triple}
\begin{align}
\gamma \coloneqq \alpha(\alpha_\root^{T_b},\wt\beta_1,\wt\beta_2) \coloneqq [\alpha_\root^{T_b},\wt\beta_1] \cap [\wt\beta_1,\wt\beta_2] \cap [\wt\beta_2,\alpha_\root^{T_b}] \in V_\seam(T_b)
\end{align}
as in \S D.2, \cite{ms:jh}, and for $j \in \{1,2\}$, define $\delta_j$ to be the element of $\incom(\gamma)$ with $\wt\beta_j \in (T_b)_{\gamma\delta_j}$.
Now define the order on $\btB(\wt\beta_1), \btB(\wt\beta_2) \in \stB(2T)_{B(\beta)}$ like so:
\begin{itemize}
\item if $\delta_1 <_\gamma \delta_2$, then $\btB(\wt\beta_1) < \btB(\wt\beta_2)$;

\item if $\delta_2 <_\gamma \delta_1$, then $\btB(\wt\beta_2) < \btB(\wt\beta_1)$.
\end{itemize}
Then $2\nu$ is bijective.
\end{deflem}

\begin{proof}
Throughout this proof we assume $\bn \neq (1)$, since in this case the bijectivity of $2\nu$ holds trivially.

\medskip

\noindent\emph{Step 1: If $T$ is an RRT and $\beta_1,\beta_2$ are any distinct non-root vertices, then
\begin{align}
\gamma \coloneqq \alpha(\alpha_\root,\beta_1,\beta_2) \coloneqq [\alpha_\root,\beta_1] \cap [\beta_1,\beta_2] \cap [\beta_2,\alpha_\root]
\end{align}
is the node furthest from the root satisfying $\beta_1,\beta_2 \in T_\gamma$.}

\medskip

\noindent Define $\Sigma$ to consist of those vertices $\delta$ of $T$ satisfying $\beta_1,\beta_2 \in T_\delta$.
The inclusion $\gamma \in [\alpha_\root,\beta_j]$ for $j \in \{1,2\}$ implies $\beta_j \in T_\gamma$, so $\gamma$ is an element of $\Sigma$.
Any two elements $\delta_1,\delta_2 \in \Sigma$ have the property that either $\delta_1 \in [\delta_2,\alpha_\root]$ or $\delta_2 \in [\delta_1,\alpha_\root]$, since both $\delta_1$ and $\delta_2$ lie in $[\alpha_\root,\beta_1]$.
Suppose that $\delta_1, \delta_2$ are distinct elements of $\Sigma$ and that $\delta_1$ lies in $[\delta_2,\alpha_\root]$.
Since $\beta_1,\beta_2$ lie in $T_{\delta_1}$ and $\delta_1$ lies in $[\delta_2,\alpha_\root]$, any path from $\beta_1$ to $\beta_2$ that passes through $\delta_1$ must pass through $\delta_2$ more than once; therefore $\delta_1$ does not lie in $[\beta_1,\beta_2]$, which implies $\gamma \neq \delta_1$.
It follows that $\gamma$ is the (unique) element of $\Sigma$ that is furthest from the root.

\medskip

\noindent\emph{Step 2: If $2T$ is a stable tree-pair of type $\bn$, then $2\nu(2T)$ is a 2-bracketing of $\bn$.}

\medskip

As discussed in the proof of Prop.~\ref{prop:Kr_iso}, $\sB(2T) = \nu(T_s)$ is a 1-bracketing of $r$.
For any $(B,(2B_i)) \in \stB(2T)$, it is clear that for every $i$, $2B_i \subset \{1,\ldots,n_i\}$ is consecutive.
The {\sc(stability)} axiom implies that there exists $i$ for which $2B_i$ is nonempty, so every element of $\stB(2T)$ is a 2-bracket.

Next, we justify two implicit assertions in the definition of $(\sB(2T), \stB(2T))$.
Specifically, we implicitly asserted (1) that for $\beta \in V(T_s)$ and $\wt\beta_1,\wt\beta_2 \in (V_\comp(T_b) \sqcup V_\mk(T_b)) \cap f^{-1}\{\beta\}$ with the property that $2B_i(\wt\beta_1) \cap 2B_i(\wt\beta_2) = \emptyset$ for every $i \in B(\beta)$, $\gamma \coloneqq \alpha(\alpha_\root^{T_b},\wt\beta_1,\wt\beta_2)$ lies in $V_\seam(T_b)$; and (2) that we have defined a partial order on every $\stB(2T)_{B(\beta)}$.
\begin{itemize}
\item[(1)] Suppose for a contradiction that $\gamma \in V_\comp(T_b) \sqcup V_\mk(T_b)$, and recall from Step 1 that $\gamma$ can be interpreted as the node furthest from the root with $\wt\beta_1,\wt\beta_2 \in (T_b)_\gamma$.
The assumption on $\wt\beta_1,\wt\beta_2$ implies that they are not the same vertex, hence $\gamma$ cannot lie in $V_\mk(T_b)$; therefore $\gamma \in V_\comp(T_b)$.
For $j \in \{1,2\}$, define $\eps_j$ to be the element of $\incom(\gamma)$ with $\wt\beta_j \in (T_b)_{\gamma\eps_j}$.
By the definition of $\gamma$, $\eps_1$ and $\eps_2$ cannot coincide.
In particular, $\#\!\incom(\gamma) \geq 2$, so $f$ must map the incoming edges of $\gamma$ bijectively onto the incoming edges of $f(\gamma)$.
Therefore $f(\eps_1)$ and $f(\eps_2)$ are distinct elements of $\incom(f(\gamma))$.
A given edge in $T_b$ is either contracted by $f$ or mapped to an edge in an orientation-preserving fashion, so we must have $f(\wt\beta_1) \neq f(\wt\beta_2)$, in contradiction to the assumption $f(\wt\beta_1) = \beta = f(\wt\beta_2)$.
Therefore our implicit assertion $\gamma \in V_\seam(T_b)$ was justified.

\item[(2)] If $2B_i(\wt\beta_1) \cap 2B_i(\wt\beta_2) = \emptyset$ for every $i \in B(\beta)$, then $\wt\beta_1$ and $\wt\beta_2$ must be distinct; therefore our relation is antireflexive.
Next, fix $\wt\beta_1,\wt\beta_2,\wt\beta_3$ with $\btB(\wt\beta_1) < \btB(\wt\beta_2)$ and $\btB(\wt\beta_2) < \btB(\wt\beta_3)$.
Define \begin{align}
\gamma_{ij} \coloneqq \alpha(\alpha_\root^{T_b},\wt\beta_i,\wt\beta_j), \quad i,j \in \{1,2,3\}.
\end{align}
Then we have either (a) $\gamma_{13} = \gamma_{12}$ and $\gamma_{23} \in (T_b)_{\gamma_{13}}$ or (b) $\gamma_{13} = \gamma_{23}$ and $\gamma_{12} \in (T_b)_{\gamma_{13}}$.
(Indeed, $\gamma_{12}$ and $\gamma_{23}$ are both in $[\wt\beta_2,\alpha_\root^{T_b}]$, so either $\gamma_{12} \in (T_b)_{\gamma_{23}}$ or $\gamma_{23} \in (T_b)_{\gamma_{12}}$.
Suppose $\gamma_{12} \in (T_b)_{\gamma_{23}}$.
Then $\wt\beta_1,\wt\beta_3 \in (T_b)_{\gamma_{23}}$.
Moreover, $\gamma_{23}$ is the furthest vertex from the root with this property: if there exists $\zeta \in \incom(\gamma_{23})$ such that $(T_b)_\zeta$ contains $\wt\beta_1$ and $\wt\beta_3$, then $(T_b)_\zeta$ also contains $\gamma_{12}$ and therefore $\wt\beta_2$, contradicting the fact that $\gamma_{23}$ is the furthest vertex from the root with $\wt\beta_2, \wt\beta_3 \in (T_b)_{\gamma_{23}}$.)
Suppose (a) holds.
If $\gamma_{12} = \gamma_{13} = \gamma_{23}$, then there are $\delta_1,\delta_2,\delta_3 \in \incom(\gamma_{13})$ with $\wt\beta_j \in (T_b)_{\delta_j}$ for all $j$.
By hypothesis, $\delta_1 <_{\gamma_{13}} \delta_2$ and $\delta_2<_{\gamma_{13}}\delta_3$, hence $\delta_1 <_{\gamma_{13}} \delta_3$, hence $\btB(\wt\beta_1) < \btB(\wt\beta_3)$ as desired.
On the other hand, suppose (a) holds and $\gamma_{23} \in (T_b)_{\gamma_{13}} \setminus \gamma_{13}$.
Define $\delta_1, \delta_{23} \in \incom(\gamma_{13})$ by $\wt\beta_1 \in (T_b)_{\delta_1}$, $\gamma_{23} \in (T_b)_{\delta_{23}}$.
The hypothesis $\btB(\wt\beta_1) < \btB(\wt\beta_2)$ implies $\delta_1 <_{\gamma_{13}} \delta_{23}$, hence $\btB(\wt\beta_1) < \btB(\wt\beta_3)$ as desired.
A similar argument applies if (b) holds, so our putative partial order is transitive.
\end{itemize}

Finally, we verify the axioms of a 2-bracketing.
\begin{itemize}
\item[] {\sc(1-bracketing)} Fix $\btB = (B,(2B_i)) \in \stB(2T)$.
Then $\alpha(\btB) \in V_\comp(T_b)$ has the property that $B$ is the set of indices of incoming leaves of $f(\alpha)$, which in turn is a 1-bracket in $\sB(2T)$; therefore $B \in \sB(2T)$.

\medskip

\item[] {\sc(2-bracketing)} Suppose that $\btB, \btB' \in \stB(2T)$ have the property that for some $i_0 \in B \cap B'$, $2B_{i_0} \cap 2B_{i_0}' \neq \emptyset$, and denote $\alpha \coloneqq \alpha(\btB), \alpha' \coloneqq \alpha(\btB')$.
Choose $j \in 2B_{i_0} \cap 2B_{i_0}'$.
By assumption, the path from $\mu_{i_0j}^{T_b}$ to $\alpha_\root^{T_b}$ passes through both $\alpha$ and $\alpha'$, so we must either have $\alpha \in (T_b)_{\alpha'}$ or $\alpha' \in (T_b)_\alpha$.
In the first case we must have $\btB \subset \btB'$, and similarly in the second case.

\medskip

\item[] {\sc(root and marked points)} Since $f(\alpha_\root^{T_b}) = \alpha_\root^{T_s}$, the 2-bracket corresponding to $\alpha_\root^{T_b}$ is $\bigl(\{1,\ldots,r\},(\{1,\ldots,n_1\},\ldots,\{1,\ldots,n_r\})\bigr)$.
On the other hand, the 2-bracket corresponding to $\mu_{ij}^{T_b}$ is $(\{i\},(\{j\}))$.

\medskip

\item[] {\sc(marked seams are unfused)}
\begin{itemize}
\item Fix $\rho \in T_s$, $i \in B(\rho)$, and $j \in \{1,\ldots,n_i\}$; we must produce $\alpha_0 \in V_\comp(T_b) \cap f^{-1}\{\rho\}$ with $2B_i(\alpha_0) \ni j$.
Choose a path from $\mu^{T_b}_{ij}$ to $\alpha_\root^{T_b}$.
By examining the image of this path in $T_s$, we see that some element in the path must lie in $V_\comp(T_b)\cap f^{-1}\{\rho\}$, and we can define $\alpha_0$ to be this element.

\item Fix $B(\rho) \in \sB(2T)$; $\btB(\alpha), \btB(\alpha') \in \stB(2T)_{B(\rho)}$ with $\btB(\alpha') \subsetneq \btB(\alpha)$; $i \in B(\rho)$; and $j \in 2B_i(\alpha) \setminus 2B_i(\alpha')$.
We must produce $\btB(\alpha'') \in \stB_{B(\rho)}$ with $\btB(\alpha'') \subsetneq \btB(\alpha)$ and $2B_i(\alpha'') \ni j$.
The containment $\btB(\alpha') \subsetneq \btB(\alpha)$ and the inclusions $\btB(\alpha),\btB(\alpha') \in \stB(2T)_{B(\rho)}$ imply $\alpha \in V_\comp^1(T_b)$.
Denote the incoming neighbor of $\alpha$ by $\beta$; we may now choose $\alpha'' \in \incom(\beta)$ to have the property that $(T_b)_{\alpha''}$ includes $\mu_{ij}^{T_b}$.
\end{itemize}

\medskip

\item[] {\sc(partial order)} Earlier we endowed every $\stB_{B(\beta)}, \alpha \in T_s$ with a partial order.
\begin{itemize}
\item It is an immediate consequence of our definition of the partial order on $\stB_{B(\beta)}$ that $\btB(\wt\beta_1), \btB(\wt\beta_2)$ are comparable if and only if $2B_i \cap 2B_i' = \emptyset$ for every $i \in B(\beta)$.

\item For any $i \in \{1,\ldots,r\}$ and $j,j'$ with $1 \leq j < j' \leq n_i$, it is clear that $(i,(\{j\})) < (i,(\{j'\}))$ in the partial order on $\stB(2T)_{\{i\}}$.

\item Fix $\btB(\alpha^j) \in \stB(2T)_{B(\rho)}$, $\btB(\wt\alpha^j) \in \stB(2T)_{B(\wt\rho)}$ for $j \in \{1,2\}$ with $\btB(\wt\alpha^j) \subset \btB(\alpha^j)$ and $\btB(\alpha^1) < \btB(\alpha^2)$.
We must show $\btB(\wt\alpha^1) < \btB(\wt\alpha^2)$.
The inclusions $\btB(\wt\alpha^j) \subset \btB(\alpha^j)$ for $j \in \{1,2\}$ are equivalent to the inclusions $\wt\alpha^j \in (T_b)_{\alpha^j}$.
From this it is easy to see that $\btB(\wt\alpha^1) < \btB(\wt\alpha^2)$.
\end{itemize}
\end{itemize}

\medskip

\noindent\emph{Step 3: We define a putative inverse $2\tau\colon W_\bn^\br \to W_\bn^\tree$.}

\medskip

\noindent Fix $(\sB,\stB) \in W_\bn^\br$.
Define $T_s \coloneqq \tau(\sB)$.
Towards the definition of $T_b$, define the following sets:
\begin{gather}
V_\mk \coloneqq \left\{\bigl(\{i\},(\{j\})\bigr) \:\left|\: {{1 \leq i \leq r} \atop {1 \leq j \leq n_i}}\right.\right\} \ni \mu_{ij}^{T_b},
\quad
V_\comp \coloneqq \stB \setminus V_\mk \ni \alpha_{\btB,\comp},
\\
V_\seam \coloneqq V_\seam^1\sqcup V_\seam^{\geq2},
\quad
V_\seam^1 \coloneqq \{\btB \in \stB \:|\: \exists\: \btB' \in \stB_{\pi(\btB)}\colon \btB' \subsetneq \btB\} \ni \alpha_{\btB,\pi(\btB),\seam}, \nonumber
\\
V_\seam^{\geq2} \coloneqq \left\{(\btB,B') \in \stB \times \sB \:\left|\: {{\not\!\exists\: \btB'' \in \stB_{\pi(\btB)}\colon \btB'' \subsetneq \btB}\atop{B' \subsetneq \pi(\btB), \: \not\!\exists B'' \in \sB: B' \subsetneq B'' \subsetneq \pi(\btB)}}\right.\right\} \ni \alpha_{\btB,B',\seam} \nonumber.
\end{gather}
Now define the vertices and incoming neighbors in $T_b$ by
\begin{gather}
\label{eq:Tb_in}
V \coloneqq V_\comp \sqcup V_\seam \sqcup V_\mk, \quad \alpha_\root \coloneqq \alpha_{(\{1,\ldots,r\},(\{1,\ldots,n_1\},\ldots,\{1,\ldots,n_r\})),\comp},
\\
\incom(\mu_{ij}^{T_b}) \coloneqq \emptyset,
\quad
\incom(\alpha_{\btB,\comp}) \coloneqq \begin{cases}
\{\alpha_{\btB,\pi(\btB),\seam}\}, & \exists\: \btB' \in \stB_{\pi(\btB)} \colon \btB' \subsetneq \btB, \\
\left\{\alpha_{\btB,B',\seam} \:\left|\: {{B' \subsetneq \pi(\btB)}\atop{\not\exists B'' \in \sB: B' \subsetneq B'' \subsetneq \pi(\btB)}}\right.\right\}, & \text{otherwise},
\end{cases}
\nonumber
\\
\incom(\alpha_{\btB,B',\seam}) \coloneqq \left\{\alpha_{\btB'',\comp} \:\left|\: {{\pi(\btB'') = B', \btB'' \subsetneq \btB,} \atop {\not\!\exists\: \btB''' \in \stB \colon \btB'' \subsetneq \btB''' \subsetneq \btB}} \right.\right\}
\cup \hspace{1.5in}
\nonumber
\\
\hspace{2.5in} \cup
\left\{\mu_{ij}^{T_b} \:\left|\: {{B'=\{i\}, \bigl(\{i\},(\{j\})\bigr) \subsetneq \btB,} \atop {\not\!\exists\: \btB'' \in \stB \colon \bigl(\{i\},(\{j\})\bigr) \subsetneq \btB'' \subsetneq \btB}} \right.\right\},
\nonumber
\end{gather}
where the incoming edges of $\alpha_{\btB,\comp}$ are solid and the incoming edges of $\alpha_{\btB,B',\seam}$ are dashed.
For $\alpha_{\btB,\comp}$ for which there does not exist $\btB' \in \stB_{\pi(\btB)}$ with $\btB' \subsetneq \btB$, order the incoming neighbors $\left\{\alpha_{\btB,B',\seam} \:\left|\: {{B' \subsetneq \pi(\btB)}\atop{\not\exists B'' \in \sB: B' \subsetneq B'' \subsetneq \pi(\btB)}}\right.\right\}$ according to the order on the incoming neighbors of $\pi(\btB)$ in $T_s = \tau(\sB)$.
For $\alpha_{\btB,B',\seam}$, order the incoming neighbors according to the partial order on $\stB_{\pi(\btB)}$.
Finally, define $f\colon T_b \to T_s$ like so:
\begin{gather}
f(\mu_{ij}^{T_b}) \coloneqq \lambda_i^{T_s},
\quad
f(\alpha_{\btB,\comp}) \coloneqq \alpha(\pi(\btB)),
\quad
f(\alpha_{\btB,B',\seam}) \coloneqq \alpha(B').
\end{gather}

There are a number of things we have to check in order to verify that $T_b \sr{f}{\to} T_s$ is a stable tree-pair.
For \eqref{eq:Tb_in} to define a RRT via Lemma~\ref{lem:RRT_in}, we must check conditions (1--3) in the statement of that lemma.
\begin{itemize}
\item[(1)] It is clear that no $\incom(\alpha)$ can contain $\alpha_\root$.

\item[(2)] Fix a non-root vertex $\alpha$ in $T_b$.
Depending on which type of vertex $\alpha$ is, we check that there exists a unique $\beta$ with $\incom(\beta) \ni \alpha$:
\begin{itemize}
\item[] {\bf $\bullet \: \mathbf{\alpha = \mu_{ij}^{T_b}}$.} The vertices $\beta$ with $\incom(\beta) \ni \mu_{ij}^{T_b}$ are exactly those $\alpha_{\btB,\{i\},\seam}$ with $\btB$ satisfying these properties:
\begin{itemize}
\item[(a)] $(i,(\{j\})) \subsetneq \btB$;

\item[(b)] either $\pi(\btB) = \{i\}$, or $\pi(\btB) \supsetneq \{i\}$ and no $B'' \in \sB$ has $\{i\} \subsetneq B'' \subsetneq \pi(\btB)$;

\item[(c)] no $\btB'' \ni \stB$ has $(i,(\{j\})) \subsetneq \btB'' \subsetneq \btB$.
\end{itemize}
Define $\Sigma$ to consist of those $\btB \in \stB$ that properly contain $(i,(\{j\}))$, and order $\Sigma$ by inclusion.
Since $\alpha$ is not the root, $\Sigma$ contains $\alpha_\root$ and is therefore not empty; by the {\sc(2-bracketing)} property of 2-bracketings, any two elements of $\Sigma$ are comparable under inclusion.
Therefore $\Sigma$ has a unique minimal element $\btB^0$.

I claim that $\btB^0$ is the unique element of $\stB$ satisfying (a--c).
Indeed, it is clear from its definition that $\btB^0$ satisfies (a) and (c).
If $\btB$ does not satisfy (b), then there exists $B'' \in \sB$ with $\{i\} \subsetneq B'' \subsetneq \pi(\btB^0)$.
By the {\sc(marked seams are unfused)} property of 2-bracketings, there exists $\btB'' \in \stB$ with $\pi(\btB'') = B''$ and $\btB''_i \ni j$.
This 2-bracket satisfies $(i,(\{j\})) \subsetneq \btB'' \subsetneq \btB^0$, which contradicts the definition of $\btB$; therefore $\btB$ satisfies (b).
On the other hand, suppose that $\btB$ satisfies (a--c).
(a) implies that $\btB$ lies in $\Sigma$, and (c) implies that $\btB$ is in fact the minimal element of $\Sigma$; therefore $\btB = \btB_0$.
We may conclude that $\beta \coloneqq \alpha_{\btB,\{i\},\seam}$ is the unique vertex in $T_b$ with $\incom(\beta) \ni \alpha$.

\item[] {\bf $\bullet \: \mathbf{\alpha = \alpha_{\btB,\comp}}$.} An argument similar to the one used in the case $\alpha = \mu_{ij}^{T_b}$ shows that there is a unique $\beta$ with $\incom(\beta) \ni \alpha$.

\item[] {\bf $\bullet \: \mathbf{\alpha = \alpha_{\btB,\mathit{B'},\seam}}$.} If there exists $\btB'' \in \stB_{\pi(\btB)}$ with $\btB'' \subsetneq \btB$, then $\alpha \in V_\seam^1$ and $B' = \pi(\btB)$.
Therefore $\beta \coloneqq \alpha_{\btB,\comp}$ is the unique vertex with $\incom(\beta) \ni \alpha$.

On the other hand, suppose that there does not exist such a $\btB''$.
Then $\alpha \in V_\seam^{\geq2}$, and $\beta \coloneqq \alpha_{\btB,\comp}$ is the unique vertex with $\incom(\beta) \ni \alpha$.
\end{itemize}

\item[(3)] Suppose that $\alpha_1, \ldots, \alpha_\ell \in V$ has $\ell \geq 2$ and $\alpha_j \in \incom(\alpha_{j+1})$ for every $j$.
It is clear from our verification of (2) that $\alpha_1$ is not the same as $\alpha_\ell$.
\end{itemize}
Now that we have shown that $T_b$ is well-defined as an RRT, we check the rest of the requirements on $T_b \sr{f}{\to} T_s$.
\begin{itemize}
\item
\begin{itemize}
\item We have defined $V$ as the union $V = V_\comp \sqcup V_\seam \sqcup V_\mk$, and the incoming and outgoing edges of the vertices are clearly the necessary type.
It is not hard to see that $\alpha_{\btB,\comp} \in V_\comp$ has at least one incoming edge: if there exists $\btB' \in \stB_{\pi(\btB)}$ with $\btB' \subsetneq \btB$, then $\#\!\incom(\alpha_{\btB,\comp}) = 1$.
Next, suppose that there is no such $\btB'$.
The incoming vertices of $\alpha_{\btB,\comp}$ are in correspondence with maximal elements of $\Sigma$, where $\Sigma$ consists of those elements $B'$ of $\sB$ with $B' \subsetneq \pi(\btB)$.
It follows from the {\sc (root and leaves)} property of 1-bracketings that $\#\!\incom(\alpha_{\btB,\comp}) \geq 2$.

\item {\sc(stability)} Fix $\alpha = \alpha_{\btB,\comp} \in V_\comp^1$.
Its unique incoming neighbor is $\beta\coloneqq \alpha_{\btB,\pi(\btB),\seam}$.
We must show that $\beta$ has at least 2 incoming neighbors.
The incoming neighbors of $\beta$ are in correspondence with the maximal elements of the set $\Sigma$ of $\btB'' \in \stB_{\pi(\btB)}$ with $\btB'' \subsetneq \btB$.
The fact that $\alpha_{\btB,\comp}$ lies in $V_\comp^1$ implies that $\Sigma$ is nonempty.
Choose $\btB^1$ to be any maximal element of $\Sigma$.
Now choose any $i, j$ with the property that $\btB_i \setminus \wt\btB_i$ contains $j$.
Define $\Sigma'$ to consist of those $\btB'' \in \Sigma$ with $\btB''_i \ni j$.
By {\sc(marked seams are unfused)}, $\Sigma'$ is nonempty; choose $\btB^2$ to be any maximal element of $\Sigma'$.
Then $\btB^1, \btB^2$ are distinct maximal elements of $\Sigma$.
This shows that $\beta$ has at least 2 incoming neighbors.

On the other hand, suppose that $\alpha = \alpha_{\btB,\comp}$ lies in $V_\comp^{\geq2}$.
Write $\btB = (B, (2B_i))$.
The incoming neighbors of $\alpha$ are in correspondence with maximal elements of the set $\Sigma$ of $B' \in \sB$ with $B' \subsetneq B$.
Not every $2B_i$ can be empty, so we may choose $i, j$ with the property that $2B_i$ contains $j$.
Define $\Sigma'$ to be the set of 1-brackets $B' \in \sB$ with $\{i\} \subset B' \subsetneq B$.
$\Sigma'$ contains $\{i\}$, hence is nonempty; define $B^1$ to be a maximal element of $\Sigma'$ (in fact, this determines $B^1$ uniquely).
Now define $\Sigma''$ to be the set of 2-brackets $\btB'' \in \stB_{B^1}$ with $(i,(\{j\})) \subset \btB'' \subsetneq \btB$.
$\Sigma''$ contains $(i,(\{j\}))$, hence is nonempty; define $\btB^2$ to be a maximal element of $\Sigma''$.
Then $\alpha = \alpha_{\btB,\comp}$ has $\beta \coloneqq \alpha_{\btB,B^1,\seam}$ as an incoming neighbor, and $\beta$ has $\alpha_{\btB^2,\comp}$ as an incoming neighbor.
\end{itemize}

\item By Prop.~\ref{prop:Kr_iso}, $T_s$ is an element of $K_r^\tree$.

\item It is clear that $f$ satisfies the necessary properties.
\end{itemize}

\medskip

\noindent\emph{Step 4: We verify that $2\nu$ and $2\tau$ are inverse to one another.}

\medskip

\noindent This can be shown by an argument similar to the one made in the proof of Prop.~\ref{prop:Kr_iso} to show that $\nu$ and $\tau$ are inverse to one another.
\end{proof}


\begin{example}
\label{ex:2nu_example}
In the following figure, we illustrate the definition of $2\nu$ as a map of sets:

\begin{figure}[H]
\centering
\def\svgwidth{0.65\columnwidth}
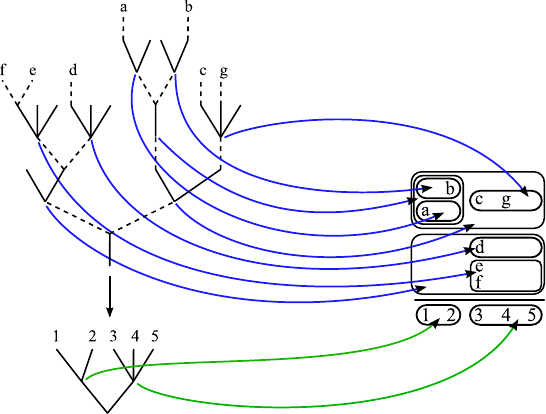
\label{fig:2nu_example}
\end{figure}

\noindent
On the left is the tree-pair in $W_{11410}^\tree$ we discussed in Ex.~\ref{ex:tree-pair_examples}, and on the right is the 2-bracketing in $W_{11410}^\br$ we discussed in Ex.~\ref{ex:2bracketing_examples}.
In fact, these objects are identified by $2\nu$.
Indeed, we see here how the elements of $V_\comp(T_b) \cup V_\mk(T_b)$ are sent to 2-brackets (indicated by blue arrows), and how the elements of $T_s$ are sent to 1-brackets (green arrows).
(We have omitted the blue and green arrows corresponding to $\rho_\root^{T_s}$, $\alpha_\root^{T_b}$, $V_\mk(T_b)$, and $T_s \setminus (T_s)_\inte$.)
The procedure for assigning a 2-bracket to an element $\alpha$ of $V_\comp(T_b) \cup V_\mk(T_b)$ is simple: the 2-bracket includes the elements of $V_\mk(T_b)$ lying above $\alpha$, and the projection of the 2-bracket includes the leaves of $T_s$ above $f(\alpha)$.

In the next figure we indicate, in the case of the same tree-pair $2T$, how the partial order on $2\nu(2T)$ is defined.
Specifically, we indicate why the inequalities $\btB(\gamma_1) < \btB(\gamma_2)$, $\btB(\delta_1) < \btB(\delta_2)$, and $\btB(\eps_1) < \btB(\eps_2)$ hold.
Here is the procedure, in the case of $\delta_1$ and $\delta_2$: draw (blue) paths downward from $\delta_1$ and $\delta_2$, until the paths intersect at a vertex $\alpha$.
At $\alpha$ --- necessarily an element of $V_\seam(T_b)$ --- note which elements of $\incom(\alpha)$ the two paths passed through.
Using the order on $\incom(\alpha)$, we obtain the inequality $\btB(\delta_1) < \btB(\delta_2)$.

\begin{figure}[H]
\centering
\def\svgwidth{0.65\columnwidth}
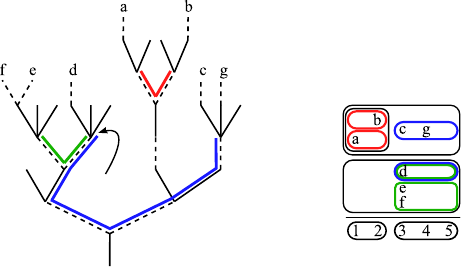
\label{fig:2nu_order_example}
\end{figure}
\null\hfill$\triangle$
\end{example}

In this lemma, we introduce the notion of a move on a 2-bracketing.
We say that $\stB'$ is the result of performing a move on $\stB$ if $(2\nu)^{-1}(\stB')$ is the result of performing a move on $(2\nu)^{-1}(\stB)$.


\begin{lemma}
\label{lem:Wn_models_orders}
The partial order on $W_\bn^\br$ coincides with the one induced by the partial order on $W_\bn^\tree$ and the isomorphism $2\nu\colon W_\bn^\tree \to W_\bn^\br$.
\end{lemma}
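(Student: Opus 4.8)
The plan is to verify that the bijection $2\nu$ and its inverse $2\tau$ are both order-preserving; since they are mutually inverse bijections (Def.-Lem.~\ref{deflem:Wn_models_iso}), this shows that the poset structure of $W_\bn^\br$ given by proper containment of $2$-bracketings agrees with the one transported from $W_\bn^\tree$ along $2\nu$. Concretely I must prove two implications: (i) if $2T'$ is obtained from $2T$ by a single move then $2\nu(2T') < 2\nu(2T)$; and (ii) if $(\sB',\stB') < (\sB,\stB)$ then $2\tau(\sB',\stB')$ is obtained from $2\tau(\sB,\stB)$ by a finite sequence of moves. Direction (i) is the analogue of the forward half of Step~4 in the proof of Prop.~\ref{prop:Kr_iso}, while (ii) is the analogue of its backward half and requires substantially more work.

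For (i) it suffices to treat one move, then go case by case through type-$1$, type-$2$, and type-$3$ moves. In each case the move subdivides $T_b$ (and, for a type-$2$ move, also $T_s$): contracting the newly created edges gives a surjection $T_b' \to T_b$, resp.\ $T_s' \to T_s$, identifying $V_\comp(T_b) \sqcup V_\mk(T_b)$ with a subset of $V_\comp(T_b') \sqcup V_\mk(T_b')$ and $V(T_s)$ with a subset of $V(T_s')$. A subdivision does not change which marked points lie above a surviving component or marked vertex, nor which leaves of $T_s$ lie above a surviving vertex, so each $2T$-bracket is again a $2T'$-bracket; combined with the order-preservation of $\nu$ (Prop.~\ref{prop:Kr_iso}) this gives $\sB(2T) \subseteq \sB(2T')$ and $\stB(2T) \subseteq \stB(2T')$, at least one properly since $d(2T') = d(2T)-1$ by Lemma~\ref{lem:Wn_moves}. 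Finally, for surviving vertices $\wt\beta_1,\wt\beta_2$ over a common $\beta \in V(T_s)$, the vertex $\alpha(\alpha_\root^{T_b},\wt\beta_1,\wt\beta_2)$ and the incoming neighbors $\delta_1,\delta_2$ through which the two root-paths pass are unaffected by the subdivision (a subdivision at a vertex preserves the cyclic order of its incoming edges and the subtree hanging from each), so the comparability of $\btB(\wt\beta_1),\btB(\wt\beta_2)$ and the direction of any inequality between them are computed identically in $2T$ and in $2T'$; this is routine but somewhat lengthy across the three move types. Hence $2\nu(2T') < 2\nu(2T)$.

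For (ii) I would mirror the $K_r$ argument: first develop the notion of a \emph{homomorphism of stable tree-pairs} --- a pair $(f_b\colon T_b' \to T_b,\, f_s\colon T_s' \to T_s)$ of RRT homomorphisms intertwining the coherence maps and respecting the $V_\comp/V_\seam/V_\mk$ decomposition --- and prove the tree-pair analogue of Lemma~\ref{lem:RRT_ll}: a surjective homomorphism of stable tree-pairs is exactly the edge-contraction map of a finite sequence of moves. Given $(\sB',\stB') < (\sB,\stB)$, take $f_s$ to be the map from the proof of Prop.~\ref{prop:Kr_iso}, and define $f_b$ on component vertices by $\alpha_{\btB',\comp} \mapsto \alpha_{\btB,\comp}$, where $\btB$ is the smallest element of $\{\btB'' \in \stB \mid \btB'' \supseteq \btB'\}$ (well-defined: this set contains the root $2$-bracket and is totally ordered by inclusion via {\sc(2-bracketing)}, since any two of its members both contain the nonempty part of $\btB'$), with analogous formulas on $V_\seam$ and $V_\mk$. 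One then checks that $(f_b,f_s)$ is a surjective homomorphism of stable tree-pairs; as in the $K_r$ case the crucial point is that each fiber $f_b^{-1}\{\alpha_{\btB,\comp}\}$ is connected, which follows from the definition of $f_b$ together with {\sc(marked seams are unfused)} and {\sc(partial order)}. Invoking the tree-pair analogue of Lemma~\ref{lem:RRT_ll} then gives (ii). (Alternatively, one could argue (ii) directly by induction on $d(2T)-d(2T')$, peeling off one maximal bracket of $\stB' \setminus \stB$ or $\sB' \setminus \sB$ at a time and checking the $2$-bracketing axioms are preserved; but the homomorphism route is closer to the existing development.)

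The main obstacle in both halves is the type-$2$ move. In (i) it is the only move that alters $T_s$, so it couples the subdivision of the seam tree to a simultaneous regrouping of the incoming edges of the component vertices above the affected seam vertex, and the order-compatibility check must be carried through the extra partition data $\ba = \sum_j \bb^j$. In (ii) the same coupling means the tree-pair analogue of Lemma~\ref{lem:RRT_ll} cannot be proved by contracting one edge at a time: undoing a type-$2$ move contracts edges of $T_b'$ and of $T_s'$ together, so the induction must be organized as ``one move at a time'' on a compound size parameter rather than ``one edge at a time.'' Once this is handled, the remaining verifications are parallel to the $K_r$ case and to Step~2 of the proof of Def.-Lem.~\ref{deflem:Wn_models_iso}.
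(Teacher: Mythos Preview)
Your proposal is correct in outline, but the paper takes the route you relegate to a parenthetical alternative. The paper declares direction (i) ``trivial'' and does not spell it out; your sketch of (i) is fine. For the nontrivial direction (ii), the paper works entirely in the $2$-bracketing model and argues exactly by ``peeling off one bracket at a time'': given $(\sB^1,\stB^1) \subsetneq (\sB^2,\stB^2)$, it produces an intermediate $(\sB,\stB)$ such that either $(\sB,\stB)$ is one move away from $(\sB^1,\stB^1)$ or $(\sB^2,\stB^2)$ is one move away from $(\sB,\stB)$, via three exhaustive cases. Case~1 (some $\btB \in \stB^2_{B^0}\setminus\stB^1_{B^0}$ properly contains another $\btB' \in \stB^2_{B^0}$) deletes $\btB$ from $(\sB^2,\stB^2)$, recovering it by a type-$1$ move. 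Case~2 (not Case~1, but some $\btB' \in \stB^2_{B^0}\setminus\stB^1_{B^0}$ is properly contained in some $\btB \in \stB^2_{B^0}$) performs a type-$3$ move on $(\sB^1,\stB^1)$ by adjoining the maximal proper sub-$2$-brackets of $\btB$. Case~3 (neither holds) forces $\sB^2\setminus\sB^1 \neq \emptyset$, and removing a chosen $B \in \sB^2\setminus\sB^1$ together with $\stB^2_B$ from $(\sB^2,\stB^2)$ exhibits $(\sB^2,\stB^2)$ as a type-$2$ move on the result.

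Your primary route --- developing a notion of stable tree-pair homomorphism and a tree-pair analogue of Lemma~\ref{lem:RRT_ll} --- would also work and is conceptually closer to the $K_r$ treatment, but it requires building machinery the paper avoids: as you correctly note, a type-$2$ move is not a single edge contraction, so the induction must be reorganized move-by-move rather than edge-by-edge, and you must formulate and check compatibility of $(f_b,f_s)$ with the $V_\comp/V_\seam/V_\mk$ partition and the coherence maps. The paper's approach trades that structural investment for a concrete three-case verification that each peeled bracket corresponds to a legal move and leaves behind a valid $2$-bracketing (the only nontrivial axiom to recheck being {\sc(marked seams are unfused)}). Your instinct that the homomorphism route is ``closer to the existing development'' is thus inverted: the paper opts for the direct bracket-side argument.
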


\begin{proof}
The nontrivial direction is to show that for $(\sB^1,\stB^1) \subsetneq (\sB^2,\stB^2)$, we can obtain $(\sB^2,\stB^2)$ from $(\sB^1,\stB^1)$ via a finite sequence of moves.
It is enough to show that there is a 2-bracketing $(\sB, \stB) \in W_\bn^\br$ satisfying the containments
\begin{align}
(\sB^1,\stB^1) \subset (\sB, \stB) \subset (\sB^2,\stB^2),
\end{align}
and such that either $(\sB,\stB)$ is the result of performing a single move on $(\sB^1,\stB^1)$, or $(\sB^2,\stB^2)$ is the result of performing a single move on $(\sB,\stB)$.
We produce such a 2-bracketing in the following, exhaustive, cases.
\begin{itemize}
\item[] {\bf Case 1: There exist $B^0 \in \sB^1$ and $\btB \in \stB^2_{B^0}\setminus \stB^1_{B^0}$, $\btB' \in \stB^2_{B^0}$ with $\btB' \subsetneq \btB$.}
We claim that $(\sB^2, \stB^2 \setminus \{\btB\})$ is a valid 2-bracketing.
The only property that does not obviously hold is {\sc(marked seams are unfused)}.
This is a consequence of the fact that $(\sB^2,\stB^2)$ has the {\sc(marked seams are unfused)} property.
$(\sB^2,\stB^2)$ is the result of performing a type-1 move on $(\sB^2,\stB^2\setminus\{\btB\})$, and the necessary containments hold:
\begin{align}
(\sB^1,\stB^1) \subset (\sB^2,\stB^2\setminus\{\btB\}) \subsetneq (\sB^2,\stB^2).
\end{align}


We illustrate this case in the following figure.
On the left are the 2-bracketings $(\sB^1,\stB^1)$, $(\sB,\stB)$, $(\sB^2,\stB^2)$, from left to right; on the right are the tree-pairs corresponding via $2\nu$ to these 2-bracketings.
$(\sB^2,\stB^2)$ is the result of performing a type-1 move on $(\sB,\stB)$, and we highlight in red the portion of the tree-pairs involved in this move.

\begin{figure}[H]
\centering
\def\svgwidth{0.975\columnwidth}
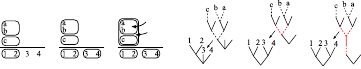
\label{fig:2nu_poset_map_example_1}
\end{figure}


\item[] {\bf Case 2: Case 1 does not hold, and there exist $B^0 \in \sB^1$ and $\btB \in \stB^2_{B^0}$, $\btB' \in \stB^2_{B^0}\setminus \stB^1_{B^0}$ with $\btB' \subsetneq \btB$.}
Fix such 2-brackets $\btB', \btB$.
Without loss of generality, we may assume that $\btB$ is minimal among 2-brackets in $\stB^2_{B^0}$ that properly contain $\btB'$.
The assumption that we are not in Case 1 implies that $\btB$ lies in $\stB^1_{B^0}$; this assumption also implies that $\btB'$ cannot properly contain any 2-bracket in $\stB^2_{B^0}$.
This and the minimality of $\btB$ implies that for any $i \in B^0$ and $j \in 2B'_i$, there is no $\btB'' \in \stB^1_{B^0}$ with $\btB'' \subsetneq \btB$ and $2B''_i \ni j$; it therefore follows from the {\sc(marked seams are unfused)} property of $(\sB^1,\stB^1)$ that there are no 2-brackets in $\stB^1_{B^0}$ that are properly contained in $\btB$.
The {\sc(marked seams are unfused)} property of $(\sB^2,\stB^2)$ implies that for every $i \in B^0$ and $j \in 2B^2_i$, there exists $\wt \btB \in \stB^2_{B^0}$ with $\wt\btB \subsetneq \btB$ and $\wt 2B_i \ni j$.
This, together with {\sc(2-bracketing)}, imply that if $\wt B^1, \ldots, \wt B^k \in \stB^2_{B^0}$ denote the maximal elements (with respect to inclusion) of $\stB^2_{B^0}$ that are properly contained in $\btB$, then these 2-bracketings satisfy $\btB = \bigsqcup_{i=1}^k \wt \btB^k$.
Therefore $(\sB^1,\stB^1\cup\{\wt\btB^1,\ldots,\wt\btB^k\})$ is the result of performing a single type-3 move on $(\sB^1,\stB^1)$, and the necessary containments hold:
\begin{align}
(\sB^1,\stB^1) \subsetneq (\sB^1,\stB^1\cup\{\wt\btB^1,\ldots,\wt\btB^k\}) \subset (\sB^2,\stB^2).
\end{align}


As in Case 1, we illustrate this procedure below.
Here we take $B^0 = (1,2,3,4)$, $\btB = \bigl((1,2,3,4),((c,b,a),(),(),())\bigr)$, and $\btB' = \bigl((1,2,3,4),((b,a),(),(),())\bigr)$.

\begin{figure}[H]
\centering
\def\svgwidth{1.0\columnwidth}
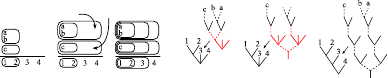
\label{fig:2nu_poset_map_example_2}
\end{figure}


\medskip

\item[] {\bf Case 3: Neither Case 1 nor Case 2 hold.}
The proper containment $(\sB^1,\stB^1) \subsetneq (\sB^2,\stB^2)$ implies that either $\sB^2\setminus \sB^1$ is nonempty or $\stB^2\setminus \stB^1$ is nonempty.
I claim that under the current assumptions, $\sB^2\setminus \sB^1$ must be nonempty.
Indeed, suppose that $\stB^2\setminus\stB^1$ is nonempty, and choose an element $\btB = (B, (2B_i))$.
The assumption that neither Case 1 nor Case 2 hold implies that there is no $\btB' \in \stB^2_B$ with either $\btB' \subsetneq \btB$ or $\btB \subsetneq \btB'$.
This, together with the {\sc (2-bracketing)} and {\sc(marked seams are unfused)} properties of $(\sB^1,\stB^1)$, imply that $B$ lies in $\sB^2\setminus \sB^1$.
We may conclude that $\sB^2\setminus\sB^1$ is nonempty.

Choose $B\in \sB^2\setminus\sB^1$, and note that the assumption that neither Case 1 nor Case 2 hold implies that any two elements of $\stB^2_B$ are disjoint.
Set $\sB \coloneqq \sB^2\setminus\{B\}$ and $\stB \coloneqq \stB^2\setminus \stB^2_B$.
Then the necessary containments hold, and $(\sB^2,\stB^2)$ is the result of performing a single type-2 move on $(\sB,\stB)$.


As above, we illustrate this case in the following figure.

\begin{figure}[H]
\centering
\def\svgwidth{0.95\columnwidth}
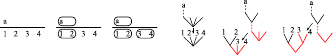
\label{fig:2nu_poset_map_example_3}
\end{figure}

\end{itemize}
\end{proof}


\section{Key properties of \texorpdfstring{$W_\bn$}{Wn}}
\label{sec:Wn_polytope}


In this section we establish several properties of $W_\bn$, collected in this paper's main theorem:

\begin{theorem}[Key properties of $W_\bn$]
\label{thm:main}
For any $r \geq 1$ and $\bn \in \bZ^r_{\geq0}\setminus\{\bzero\}$, the 2-associahedron $W_\bn$ is a poset, the collection of which satisfies the following properties:
\begin{itemize}
\item[] \textsc{(abstract polytope)} For $\bn \neq (1)$, $\wh{W_\bn} \coloneqq W_\bn \cup \{F_{-1}\}$ is an abstract polytope of dimension $|\bn| + r - 3$.

\item[] \textsc{(forgetful)} $W_\bn$ is equipped with forgetful maps $\pi\colon W_\bn \to K_r$, which are surjective maps of posets.

\item[] \textsc{(recursive)} For any stable tree-pair $2T = T_b \sr{f}{\to} T_s \in W_\bn^\tree$, there is an inclusion of posets
\begin{align}
\Gamma_{2T} \colon \prod_{
{\alpha \in V_\comp^1(T_b),}
\atop
{\incom(\alpha)=(\beta)}
} W_{\#\!\incom(\beta)}^\tree
\times
\prod_{\rho \in V_\inte(T_s)} \prod^{K_{\#\!\incom(\rho)}}_{
{\alpha\in V_\comp^{\geq2}(T_b)\cap f^{-1}\{\rho\},}
\atop
{\incom(\alpha)=(\beta_1,\ldots,\beta_{\#\!\incom(\rho)})}
}
\hspace{-0.25in} W^\tree_{\#\!\incom(\beta_1),\ldots,\#\!\incom(\beta_{\#\!\incom(\alpha)})}
\hra W_\bn^\tree,
\end{align}
where the superscript on one of the product symbols indicates that it is a fiber product with respect to the maps described in \textsc{(forgetful)}.
This inclusion is a poset isomorphism onto $\cl(2T) = (F_{-1},2T]$.
\end{itemize}
\end{theorem}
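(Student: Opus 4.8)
The plan is to establish the three properties in order, modeling the argument on the proof of Prop.~\ref{prop:Kr_main} for the associahedra, the genuinely new inputs being the fiber-product bookkeeping in \textsc{(recursive)} and the enlarged move-analysis behind \textsc{(strongly connected)}. The \textsc{(forgetful)} property is the easiest: in the bracketing model $\pi^\br$ sends $(\sB,\stB)$ to $\sB$, and since $(\sB',\stB')<(\sB,\stB)$ forces $\sB'\supseteq\sB$ this map preserves order; surjectivity follows by exhibiting a section, e.g.\ sending $\sB\in K_r^\br$ to the 2-bracketing whose 2-brackets are the root, the mandatory $(\{i\},(\{j\}))$, and one ``full'' 2-bracket $\bigl(B_0,(\{1,\dots,n_i\})_{i\in B_0}\bigr)$ for each non-singleton $B_0\in\sB$ which supports a marked point; one checks directly that this satisfies Def.~\ref{def:Wn_br}. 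The cases $\bn=(1)$ are trivial throughout (then $W_\bn$ is a point).

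For \textsc{(recursive)} I would follow Def.-Lem.~\ref{deflem:gammaT} closely. The map $\Gamma_{2T}$ is ``substitution'': replace the subtree of $T_b$ above each $V_\comp^1$-vertex $\alpha$ (with incoming neighbor $\beta$) by the chosen tree-pair in $W_{\#\incom(\beta)}^\tree$, and for each interior seam vertex $\rho$ of $T_s$ replace the family of subtrees over the $\alpha\in V_\comp^{\geq2}(T_b)\cap f^{-1}\{\rho\}$ by the chosen family of tree-pairs in the various $W^\tree_{\#\incom(\beta_1),\dots}$ --- compatible precisely in that all these opened-up bubbles share, via the forgetful maps, a single opened-up seam structure $\rho'\in K_{\#\incom(\rho)}$, which is the content of the fiber product. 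The inverse $\Sigma_{2T}\colon\cl(2T)\to(\text{fiber product})$ takes $\wh{2T}\in\cl(2T)$, uses the unique surjective morphism $\wh{2T}\to 2T$ (the tree-pair analogue of Lemma~\ref{lem:RRT_ll}, implicit in the proof of Thm.~\ref{thm:iso}), and reads off from each relevant vertex of $T_b$ the corresponding subtree-pair, together, for each $\rho$, with the subtree of the seam tree of $\wh{2T}$ lying over $\rho$ as the shared $K$-factor. One then verifies $\Gamma_{2T}$ and $\Sigma_{2T}$ are mutually inverse and order-preserving; this is bookkeeping-heavy, the one delicate point being to confirm that the constraint relating the $V_\comp^{\geq2}$-bubbles over a common $\rho$ is exactly a fiber product over $K_{\#\incom(\rho)}$ and nothing finer or coarser.

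The \textsc{(abstract polytope)} property is proven by checking the four axioms of Def.~\ref{def:abstract_polytope}, as in Prop.~\ref{prop:Kr_polytope}. \textsc{(extremal)}: $F_{-1}$ is the adjoined element and $F_\top$ is the minimal 2-bracketing (root plus mandatory brackets). \textsc{(flag-length)}: by Lemma~\ref{lem:Wn_moves} each covering relation is a single move, and whenever $d(2T)>0$ the reformulation \eqref{eq:2T_dim_reform} exhibits a vertex admitting a move of type 1, 2, or 3; combined with $0\leq d\leq|\bn|+r-3$ and $d(F_\top)=|\bn|+r-3$, every flag has length $|\bn|+r-2$. \textsc{(diamond)}: if $d(2T)=1$, \eqref{eq:2T_dim_reform} forces a single ``defect'' in $2T$ and inspection of the three move-types shows exactly two moves are available; if $d(2T')=d(2T)-2$, passing to the bracketing model shows $2T'$ arises from $2T$ by two ``independent'' refinements, so $(2T',2T)$ has two elements. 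For \textsc{(strongly connected)} I would reduce, as for $K_r$, to three claims: (i) $\wh{W_\bn}$ is connected; (ii) $[F_{-1},2T]$ is connected when $d(2T)\geq2$; (iii) $[2T',2T]$ is connected when $d(2T')\leq d(2T)-3$. Claim (ii) follows from \textsc{(recursive)} plus the elementary fact that a (fiber) product of connected posets with a minimum adjoined is connected, using induction on dimension to know each factor $\wh W$ or $\wh K$ is connected. Claim (iii) follows from a join construction in the bracketing model --- forming the coarsest common refinement of $\wt{2T}^1$ and $\wt{2T}^2$ still lying above $2T$, for codimension-one $\wt{2T}^1,\wt{2T}^2\in(2T',2T)$ --- where the one thing needing care (more so than in the $K_r$ case) is that the \textsc{(marked seams are unfused)} axiom survives this operation, which it does because both $\wt{2T}^j$ lie below the valid 2-bracketing $2T'$.

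The main obstacle is Claim (i), connectedness of $\wh{W_\bn}$, the analogue of the four-case argument (with figures) in the proof of Prop.~\ref{prop:Kr_polytope}. One must join any two codimension-one faces of $W_\bn$, and there are now three flavors --- collision of two marked points on a seam (type 1), collision of a set of seams (type 2), and the ``$q\geq2$ splitting'' of a $\geq2$-valent complex vertex (type 3) --- together with the ways these interact through $\pi$. I would organize the argument by first moving within a fiber of $\pi$: using \textsc{(recursive)} to localize to a single interior seam vertex, connect all type-1 and type-3 codimension-one faces lying over a fixed 1-bracketing by explicit paths; then, to change the 1-bracketing, lift the already-established connectivity of $K_r$ through $\pi$ (which passes through type-2 faces), checking that one may always choose the connecting path in $K_r$ and a lift so as to stay in codimension one. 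Assembling these gives connectivity of $\wh{W_\bn}$ and completes the proof.
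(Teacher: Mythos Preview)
Your outline tracks the paper closely for \textsc{(forgetful)}, \textsc{(recursive)}, \textsc{(extremal)}, \textsc{(flag-length)}, and the $d(2T)=1$ case of \textsc{(diamond)}. But the remaining \textsc{(diamond)} case does not reduce to ``two independent refinements'' in the bracketing model as it does for $K_r$: a single type-2 or type-3 move adds many 2-brackets at once, and the two elements of a codimension-2 interval $(2T',2T)$ need not correspond to disjoint subcollections of the added brackets. For instance, when $2T'$ is reached by a type-3 split into $q$ pieces followed by a further type-3 split of one piece into $q'$ pieces, the alternate intermediate is reached by a type-3 split into $q+q'-1$ pieces followed by a \emph{type-1} move, and the two intermediates share most of their added 2-brackets. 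The paper (Prop.~\ref{prop:Wn_diamond}) therefore abandons the bracketing model here and does a nine-case analysis in the tree model over the types of the two moves.

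The more serious gap is in \textsc{(strongly connected)}. Your claim (ii) invokes ``the elementary fact that a (fiber) product of connected posets with a minimum adjoined is connected,'' but what is needed is connectivity of the open interval $(F_{-1},F_\top)$ in $\{F_{-1}\}\cup\prod^{K_s}_i W_{\bm^i}$, i.e.\ connectivity \emph{avoiding the common top}, and this does not follow from connectivity of the individual factors; nor does induction on dimension close the loop, since for $2T=F_\top$ with $r\geq2$ the recursive decomposition of $\cl(2T)$ returns $W_\bn$ itself. The paper reverses your logic (Prop.~\ref{prop:Wn_conn}): its Step~1 proves directly that every interval $[G,F_\top]$ in an arbitrary fiber product $\prod^{K_s}_i W_{\bm^i}$ is connected --- so your claim (i) is the $\ell=1$, $G=F_{-1}$ instance of this, not a separate input --- and Steps~2--3 then deduce connectivity of general $[F,G]$ by combining \textsc{(recursive)} with Step~1. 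The join construction you propose for (iii) is indeed what the paper uses inside Step~1, but it has a case you do not address: when both $\wt{2T}^1,\wt{2T}^2$ are type-3 (or both type-2) moves based at the same vertex, the union of their bracketings can have dimension strictly below $d(2T)-2$ and may coincide with the lower endpoint, so the three-term path through the join can fail; the paper builds an explicit longer path in this situation. The constraint needing care there is not \textsc{(marked seams are unfused)} but that the join stay strictly above the lower endpoint.
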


\begin{proof}
We prove the \textsc{(recursive)} and \textsc{(abstract polytope)} properties in Def.-Lem.~\ref{deflem:Gamma2T} and Thm.~\ref{thm:Wn_polytope}, respectively.
$W_\bn$ is a poset by its construction in Def.~\ref{def:Wn}, and the forgetful map from the same definition is evidently a surjection of posets.
\end{proof}


We now turn to the proof of the \textsc{(recursive)} property, which characterizes the closed faces of $W_\bn$ as products and fiber products of lower-dimensional 2-associahedra.
Toward this characterization, we show in the following lemma that for $2T' \in \cl(2T)$, certain vertices in $2T$ have avatars in $2T'$.

\begin{lemma}
\label{lem:avatars}
Fix $2T, 2T' \in W_\bn^\tree$ with $2T' \leq 2T$.
\begin{itemize}
\item For any $\rho \in T_s$, there exists a unique $\rho' \in T_s'$ satisfying $B(\rho') = B(\rho)$.

\item For any $\alpha \in V_\comp(T_b) \cup V_\mk(T_b)$ there exists a unique $\alpha' \in V_\comp(T_b') \cup V_\mk(T_b')$ satisfying $\btB(\alpha') = \btB(\alpha)$.
\end{itemize}
\end{lemma}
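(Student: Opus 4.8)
The plan is to prove both statements simultaneously, working in the bracketing model $W_\bn^\br$ where the relation $2T'\leq 2T$ becomes the pair of containments $\sB'\supset\sB$ and $\stB'\supset\stB$. In this model the two assertions become almost tautological: a vertex $\rho\in T_s$ corresponds under $\nu$ to a $1$-bracket $B(\rho)\in\sB$, and since $\sB\subset\sB'$ we have $B(\rho)\in\sB'$, which (again by the correspondence of Prop.~\ref{prop:Kr_iso}) is exactly a vertex $\rho'\in T_s'$ with $B(\rho')=B(\rho)$; uniqueness holds because the map sending a vertex of a stable RRT to its $T$-bracket is a bijection (the remark after Def.~\ref{def:T_bracket}). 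Likewise, a vertex $\alpha\in V_\comp(T_b)\cup V_\mk(T_b)$ corresponds via $2\nu$ (Def.-Lem.~\ref{deflem:Wn_models_iso}) to a $2$-bracket $\btB(\alpha)\in\stB$; since $\stB\subset\stB'$, $\btB(\alpha)\in\stB'$, hence there is a vertex $\alpha'\in V_\comp(T_b')\cup V_\mk(T_b')$ with $\btB(\alpha')=\btB(\alpha)$, and uniqueness is the observation (stated after Def.~\ref{def:2T_bracket}) that \textsc{(stability)} forces $\alpha(\btB)$ to be determined by $\btB$.

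First I would record the passage through $2\nu$ explicitly: by Thm.~\ref{thm:iso}, $2\nu\colon W_\bn^\tree\to W_\bn^\br$ is a poset isomorphism, and by Lemma~\ref{lem:Wn_models_orders} the inequality $2T'\leq 2T$ in $W_\bn^\tree$ is equivalent to the containments $\sB(2T')\supset\sB(2T)$ and $\stB(2T')\supset\stB(2T)$. Second, I would invoke the bijections ``vertices $\leftrightarrow$ brackets'': for the seam trees, $\nu$ restricted to vertices is the bijection $\rho\mapsto B(\rho)$ between $V(T_s)$ and $\sB(2T)=\nu(T_s)$ (and similarly for $T_s'$); for the bubble trees, the assignment $\alpha\mapsto\btB(\alpha)$ is a bijection from $V_\comp(T_b)\cup V_\mk(T_b)$ onto $\stB(2T)$, with inverse $\btB\mapsto\alpha(\btB)$. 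Third, I would simply chase the containments: given $\rho\in T_s$, set $\rho'\coloneqq$ the vertex of $T_s'$ corresponding to $B(\rho)\in\sB(2T)\subset\sB(2T')$; given $\alpha$, set $\alpha'\coloneqq\alpha\bigl(\btB(\alpha)\bigr)\in V_\comp(T_b')\cup V_\mk(T_b')$, which makes sense because $\btB(\alpha)\in\stB(2T)\subset\stB(2T')$. Uniqueness in each case is immediate from injectivity of the bracket assignments.

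I do not expect a genuine obstacle here; the only point requiring a word of care is making sure that when I write ``the vertex of $T_s'$ with bracket $B(\rho)$'' and ``the vertex of $T_b'$ with $2$-bracket $\btB(\alpha)$,'' the target vertex indeed lies in the claimed subset — namely that a vertex of $\tau(\sB')$ realizing a given $1$-bracket is automatically a vertex of $T_s'$ (true, since $T_s'=\tau(\sB')$ and every $1$-bracket of $\sB'$ is a $T_s'$-bracket), and that a vertex of $T_b'$ realizing a given $2$-bracket lies in $V_\comp(T_b')\cup V_\mk(T_b')$ (true, since by construction of $2\tau$ the set $\stB(2T')$ is precisely indexed by $V_\comp(T_b')\sqcup V_\mk(T_b')$ via $\alpha\mapsto\btB(\alpha)$). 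Having spelled this out, the lemma follows.
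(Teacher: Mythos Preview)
Your proof is correct, but it takes a different route from the paper's. The paper works entirely in the tree model and argues by induction on $d(2T)-d(2T')$: at each step $2T'$ is obtained from an intermediate $2T''$ by a single move, and one checks by hand that the seam tree either stays the same or acquires one new vertex (with all old vertices keeping their $1$-brackets), and similarly for the bubble tree. You instead transport the problem through the isomorphism $2\nu$ of Thm.~\ref{thm:iso}: in the bracketing model $2T'\leq 2T$ becomes the containments $\sB(2T)\subset\sB(2T')$ and $\stB(2T)\subset\stB(2T')$, and the bijections ``vertex $\leftrightarrow$ bracket'' from Prop.~\ref{prop:Kr_iso} and Def.-Lem.~\ref{deflem:Wn_models_iso} make both existence and uniqueness immediate. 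Your argument is shorter and conceptually cleaner, exploiting machinery already in place; the paper's version is more self-contained in $W_\bn^\tree$ and makes the ``avatar'' visible at the level of individual moves, which may be what the author wanted given that the lemma is used to cut out sub-tree-pairs in Def.-Lem.~\ref{deflem:subtreepairs}.
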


\begin{proof}
First, we prove the first statement.
The uniqueness of $\rho'$ is guaranteed by the stability condition, so it suffices to prove existence.
We do so by induction on $d(2T')$, starting with $d(2T') = d(2T)$ and counting down.
If $d(2T')=d(2T)$, then $T_s=T_s'$ and the statement holds trivially.
Next, suppose that for $\rho \in T_s$, we have proved the existence of $\rho' \in T_s'$ with $B(\rho') = B(\rho)$ for every $2T' \leq 2T$ with $d(2T')\geq d(2T)-k \geq 1$; we must show that there is $\rho' \in T_s'$ with $B(\rho') = B(\rho)$ for $2T' < 2T$ with $d(2T') = d(2T)-k-1$.
Choose $2T''$ with $2T' < 2T'' \leq 2T$ and $d(2T'') = d(2T)-k$, and denote by $\rho''$ the vertex in $T_s''$ with $B(\rho'')=B(\rho)$.
$2T'$ can be obtained from $2T''$ via a single move, so either $T_s' = T_s''$ or $T_s'$ can be obtained from $T_s''$ by performing the following modification to some solid corolla in $T_s''$, for $2\leq\ell<k$:

\begin{figure}[H]
\centering
\def\svgwidth{0.35\columnwidth}
\input{2T_move_2s.pdf_tex}
\end{figure}

\noindent In the former case, we can set $\rho'\coloneqq \rho''$.
In the latter case, we can identify $V(T_s') \simeq V(T_s'') \cup \{v_{\text{new}}\}$; if we set $\rho'$ to be the vertex in $T_s'$ corresponding via this identification to $\rho''$, then $B(\rho') = B(\rho'')$.

The second statement of the lemma can be proven similarly.
\end{proof}

Next, we show how this correspondence allows us to extract certain sub-tree-pairs from $2T'$.

\begin{deflem}
\label{deflem:subtreepairs}
Fix $2T, 2T' \in W_\bn^\tree$ with $2T' < 2T$.
\begin{itemize}
\item[(a)] Fix $\alpha \in V_\comp^1(T_b)$ with $\incom(\alpha) \eqqcolon (\beta)$ and $\incom(\beta) \eqqcolon (\gamma_1,\ldots,\gamma_k)$, and denote by $\alpha', \gamma_1', \ldots, \gamma_k'$ the vertices in $T_b'$ that correspond to $\alpha,\gamma_1,\ldots,\gamma_k$ via Lemma~\ref{lem:avatars}.
Define $(T_b')^\alpha$ to be the portion of $T_b'$ bounded by $\alpha'$ and $\gamma_1',\ldots,\gamma_k'$, and define $(T_s')^\alpha$ to be a single vertex.
Then $(2T')^\alpha\coloneqq (T_b')^\alpha \to (T_s')^\alpha$ is a stable tree-pair in $W_k^\tree$.

\item[(b)] For any $\rho \in V_\inte(T_s)$ with $\incom(\rho) \eqqcolon (\sigma_1,\ldots,\sigma_k)$, define $(T_s')^\rho$ to be the portion of $T_s'$ bounded by $\rho'$ and $\sigma_1',\ldots,\sigma_k'$, where we continue to use the notation of Lemma~\ref{lem:avatars}.
For any $\alpha \in V_\comp^{\geq2}(T_b)$ with $f(\alpha) = \rho$, denote $\incom(\alpha) \eqqcolon (\beta_1,\ldots,\beta_k)$ and $\incom(\beta_i) \eqqcolon (\gamma_{i1},\ldots,\gamma_{i\ell_i})$ for $i \in \{1,\ldots,k\}$, and define $(T_b')^\alpha$ to be the portion of $T_b'$ bounded by $\alpha'$ and $\gamma_{11}',\ldots,\gamma_{1\ell_1}',\ldots,\gamma_{k1}',\ldots,\gamma_{k\ell_k}'$.
Then $\Bigl((T_s')^\rho, \bigl((2T')^\alpha\coloneqq (T_b')^\alpha \to (T_s')^\rho\bigr)_\alpha\Bigr)$ is an element of the following fiber product:
\begin{align}
\prod^{K_{\#\!\incom(\rho)}}_{
{\alpha\in V_\comp^{\geq2}(T_b)\cap f^{-1}\{\rho\}}
\atop
{\incom(\alpha)=(\beta_1,\ldots,\beta_k)}
}
\hspace{-0.25in} W^\tree_{\ell_1,\ldots,\ell_k}.
\end{align}
\end{itemize}
\end{deflem}

\begin{proof}
\begin{itemize}
\item[(a)] The statement that $(2T')^\alpha$ is an element of $W_k^\tree$ is almost immediate.
Indeed, $(T_s)^\alpha = \pt$, and $(T_b)^\alpha$ is the following:

\vspace{-0.2in}
\begin{figure}[H]
\centering
\def\svgwidth{0.04\columnwidth}
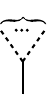
\end{figure}
\vspace{-0.15in}

\noindent Define $2T = \wt{2T}^1,\ldots,\wt{2T}^{a_0} = 2T'$, where $\wt{2T}^{a+1}$ can be obtained from $\wt{2T}^a$ by making a single move.
Then $(\wt T_s^a)^\alpha = \pt$ for every $a$, and $(\wt T_b^{a+1})^\alpha$ is either equal to $(\wt T_b^a)^\alpha$ or can be obtained from $(\wt T_b^a)^\alpha$ by performing the following modification to one of the dashed corollas in $(T_b^a)^\alpha$, for $2 \leq \ell < k$:

\vspace{-0.1in}
\begin{figure}[H]
\centering
\def\svgwidth{0.4\columnwidth}
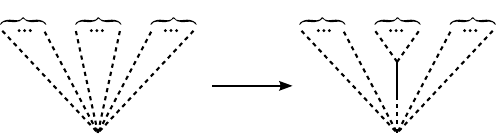
\end{figure}
\vspace{-0.1in}

\noindent $W_k^\tree$ is closed under modifications of this form, so $(2T')^\alpha$ is a stable tree-pair in $W_k^\tree$.

\item[(b)] This statement can be proven via an argument similar to the one made for (a).
\end{itemize}
\end{proof}

We are now ready to establish the \textsc{(recursive)} property.

\begin{deflem}
\label{deflem:Gamma2T}
Fix $2T \in W_\bn^\tree$.
Define a map
\begin{align}
\Gamma_{2T} \colon \prod_{
{\alpha \in V_\comp^1(T_b)}
\atop
{\incom(\alpha)=(\beta)}
} W_{\#\!\incom(\beta)}^\tree
\times
\prod_{\rho \in V_\inte(T_s)} \prod^{K_{\#\!\incom(\rho)}}_{
{\alpha\in V_\comp^{\geq2}(T_b)\cap f^{-1}\{\rho\}}
\atop
{\incom(\alpha)=(\beta_1,\ldots,\beta_{\#\!\incom(\rho)})}
}
\hspace{-0.25in} W^\tree_{\#\!\incom(\beta_1),\ldots,\#\!\incom(\beta_{\#\!\incom(\rho)})}
\hra W_\bn^\tree,
\end{align}
by sending $\Bigl((2T_\alpha)_\alpha,\bigl(T_\rho,(\wt{2T}_\alpha)_\alpha\bigr)_\rho\Bigr)$ to the element of $W_\bn^\tree$ defined like so: for $\alpha \in V^1_\comp(T_b)$ with $\incom(\alpha)\eqqcolon(\beta)$, replace the portion of $T_b$ bounded by $\alpha$ and $\incom(\beta)$ by $2T_\alpha$; for $\rho \in V_\inte(T_s)$, replace the portion of $T_s$ bounded by $\rho$ and $\incom(\rho)$ by $T_\rho$; and for $\rho \in V_\inte(T_s)$ and $\alpha \in V^{\geq2}_\comp(T_b)\cap f^{-1}\{\rho\}$ with $\incom(\alpha)\eqqcolon(\beta_1,\ldots,\beta_{\#\!\incom(\rho)})$, replace the portion of $T_b$ bounded by $\alpha$ and $\incom(\beta_1),\ldots,\incom(\beta_{\#\!\incom(\rho)})$ by $\wt{2T}_\alpha$.
Then $\Gamma_{2T}$ restricts to a poset isomorphism from its domain to $\cl(2T) \subset W_\bn^\tree$.
\end{deflem}

\begin{proof}
\noindent{\it Step 1: $\Gamma_{2T}$ is a map of posets.}

\medskip

\noindent It suffices to show that for any $\Bigl((2T_\alpha^{(2)})_\alpha,\bigl(T_\rho^{(2)},(\wt{2T}_\alpha^{(2)})_\alpha\bigr)_\rho\Bigr) < \Bigl((2T_\alpha^{(1)})_\alpha,\bigl(T_\rho^{(1)},(\wt{2T}_\alpha^{(1)})_\alpha\bigr)_\rho\Bigr)$, there exists $\Bigl((2T_\alpha^{(3)})_\alpha,\bigl(T_\rho^{(3)},(\wt{2T}_\alpha^{(3)})_\alpha\bigr)_\rho\Bigr)$ with
\begin{align}
\Bigl((2T_\alpha^{(2)})_\alpha,\bigl(T_\rho^{(2)},(\wt{2T}_\alpha^{(2)})_\alpha\bigr)_\rho\Bigr) \leq \Bigl((2T_\alpha^{(3)})_\alpha,\bigl(T_\rho^{(3)},(\wt{2T}_\alpha^{(3)})_\alpha\bigr)_\rho\Bigr) < \Bigl((2T_\alpha^{(1)})_\alpha,\bigl(T_\rho^{(1)},(\wt{2T}_\alpha^{(1)})_\alpha\bigr)_\rho\Bigr)
\end{align}
and
\begin{align}
\Gamma_{2T}\Bigl((2T_\alpha^{(3)})_\alpha,\bigl(T_\rho^{(3)},(\wt{2T}_\alpha^{(3)})_\alpha\bigr)_\rho\Bigr) < \Gamma_{2T}\Bigl((2T_\alpha^{(1)})_\alpha,\bigl(T_\rho^{(1)},(\wt{2T}_\alpha^{(1)})_\alpha\bigr)_\rho\Bigr).
\end{align}
To do so, first suppose that there exists $\alpha_0 \in V^1_\comp(T_b)$ with $2T^{(2)}_{\alpha_0} < 2T^{(1)}_{\alpha_0}$.
Define $\Bigl((2T_\alpha^{(3)})_\alpha,\bigl(T_\rho^{(3)},(\wt{2T}_\alpha^{(3)})_\alpha\bigr)_\rho\Bigr)$ to be the result of starting with $\Bigl((2T_\alpha^{(1)})_\alpha,\bigl(T_\rho^{(1)},(\wt{2T}_\alpha^{(1)})_\alpha\bigr)_\rho\Bigr)$, and replacing $2T^{(1)}_{\alpha_0}$ by $2T^{(2)}_{\alpha_0}$.
The assumption on $\alpha_0$ implies that $2T^{(2)}_{\alpha_0}$ can be obtained from $2T^{(1)}_{\alpha_0}$ by performing a sequence of type-1 moves.
Therefore $\Gamma_{2T}\Bigl((2T_\alpha^{(3)})_\alpha,\bigl(T_\rho^{(3)},(\wt{2T}_\alpha^{(3)})_\alpha\bigr)_\rho\Bigr)$ can be obtained from $\Gamma_{2T}\Bigl((2T_\alpha^{(1)})_\alpha,\bigl(T_\rho^{(1)},(\wt{2T}_\alpha^{(1)})_\alpha\bigr)_\rho\Bigr)$ by performing a sequence of type-1 moves.

If there exists no such $\alpha_0$, then we can choose $\rho_0 \in V_\inte(T_s)$ with $\bigl(T_{\rho_0}^{(2)},(\wt{2T}_\alpha^{(2)})_\alpha\bigr) < \bigl(T_{\rho_0}^{(1)},(\wt{2T}_\alpha^{(1)})_\alpha\bigr)$ and make an argument similar to the previous paragraph.

\medskip

\noindent{\it Step 2: $\Gamma_{2T}$ restricts to a poset isomorphism onto $\cl(2T)$.}

\medskip

\noindent The injectivity of $\Gamma_{2T}$ is clear.
It remains to show that the image of $\Gamma_{2T}$ is equal to $\cl(2T)$, and that the inverse is a poset map.
By Step 1, the image of $\Gamma_{2T}$ is contained in $\cl(2T)$.
Now define a putative inverse
\begin{align}
\Gamma_{2T}^{-1}\colon \cl(2T)
&\to
\prod_{
{\alpha \in V_\comp^1(T_b)}
\atop
{\incom(\alpha)=(\beta)}
} W_{\#\!\incom(\beta)}^\tree
\times
\prod_{\rho \in V_\inte(T_s)} \prod^{K_{\#\!\incom(\rho)}}_{
{\alpha\in V_\comp^{\geq2}(T_b)\cap f^{-1}\{\rho\}}
\atop
{\incom(\alpha)=(\beta_1,\ldots,\beta_{\#\!\incom(\rho)})}
}
\hspace{-0.25in} W^\tree_{\#\!\incom(\beta_1),\ldots,\#\!\incom(\beta_{\#\!\incom(\rho)})}, \\
2T' &\mapsto \Bigl((2T'_\alpha)_\alpha, \bigl(T'_\rho,(\wt{2T'}_\alpha)_\alpha\bigr)_\rho\Bigr)
\nonumber
\end{align}
like so:
\begin{itemize}
\item[(a)] For $\alpha \in V_\comp^1(T_b)$ with $\incom(\alpha)\eqqcolon(\beta)$, set $2T'_\alpha \coloneqq (2T')^\alpha \in W^\tree_{\#\!\incom(\beta)}$, where the latter stable tree-pair was defined in Def.-Lem.~\ref{deflem:subtreepairs}(a).

\item[(b)] For $\rho \in V_\inte(T_s)$ and $\alpha \in V_\comp^{\geq2}(T_b)$ with $f(\alpha)=\rho$ and $\incom(\alpha)\eqqcolon(\beta_1,\ldots,\beta_{\#\!\incom(\rho)})$, define
\begin{align}
\bigl(T_\rho',(\wt{2T'}_\alpha)_\alpha\bigr)_\rho
\coloneqq
\bigl((T_s')^\rho,((2T')^\alpha)_\alpha\bigr)_\rho,
\end{align}
where the latter expression was defined in Def.-Lem.~\ref{deflem:subtreepairs}(b).
\end{itemize}
It is simple to verify that $\Gamma_{2T}^{-1}$ is an inverse for the restriction of $\Gamma_{2T}$ to a map to $\cl(2T)$, and to verify that $\Gamma_{2T}^{-1}$ is a poset map.
\end{proof}

Now that we have recursively characterized the closed faces of $W_\bn$, we turn to our proof that $\wh{W_\bn}$ is an abstract polytope.



\begin{theorem}
\label{thm:Wn_polytope}
For any $r \geq 1$ and $\bn \in \bZ^r_{\geq0}\setminus\{\bzero,(1)\}$, $\wh{W_\bn}$ is an abstract polytope of dimension $|\bn| + r - 3$.	
\end{theorem}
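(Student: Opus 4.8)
The plan is to verify the four axioms of Def.~\ref{def:abstract_polytope} for $\wh{W_\bn} = W_\bn \cup \{F_{-1}\}$, where $d(F_{-1}) \coloneqq -1$ and $d$ is transported to $W_\bn^\br$ along $2\nu$, proceeding by induction on $|\bn|+r$ so that the \textsc{(recursive)} property established in Def.-Lem.~\ref{deflem:Gamma2T} can be combined with the inductive hypothesis that every $\wh{W_{\bn'}}$ with $|\bn'|+r' < |\bn|+r$ is already an abstract polytope of the asserted dimension; the base cases $|\bn|+r \leq 3$ are $0$-dimensional and trivial. The whole argument runs parallel to the proof of Prop.~\ref{prop:Kr_polytope}, with \textsc{(recursive)} and the three move types playing the roles that \textsc{(recursive)} and the single move type play there. \textsc{(extremal)} is immediate: $F_{-1}$ is the least face by construction, and the greatest face is $F_\top$, the top stratum, which in the $\br$ model is the $2$-bracketing whose $1$-bracketing is $\{\{1,\ldots,r\},\{1\},\ldots,\{r\}\}$ and whose only $2$-brackets are the root $\bigl(\{1,\ldots,r\},(\{1,\ldots,n_1\},\ldots,\{1,\ldots,n_r\})\bigr)$ and the markers $\bigl(\{i\},(\{j\})\bigr)$. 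For \textsc{(flag-length)} I would argue as in Prop.~\ref{prop:Kr_polytope}: any strict relation $2T' < 2T$ is refined by a single move out of $2T$ (take the first move of a witnessing sequence), and each move drops $d$ by exactly one by Lemma~\ref{lem:Wn_moves}, so along a maximal chain $2T^0 \prec \cdots \prec 2T^\ell = F_\top$ every cover drops $d$ by one; a short computation from \eqref{eq:tree-pair_dim} gives $d(F_\top) = |\bn|+r-3$; and $d(2T^0) = 0$, since otherwise the reformulated formula \eqref{eq:2T_dim_reform} exhibits a positive summand, i.e.\ a vertex of $V^1_\comp(T_b)$, $V^{\geq2}_\comp(T_b)$ or $(T_s)_\inte$ at which a type-$1$, type-$3$ or type-$2$ move is legal, contradicting minimality. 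With Lemma~\ref{lem:Wn_dim}(b) this shows every flag has $|\bn|+r-1$ faces.

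The \textsc{(recursive)} axiom itself is Def.-Lem.~\ref{deflem:Gamma2T}: $\cl(2T) = (F_{-1},2T]$ is isomorphic, via $\Gamma_{2T}$, to a product of $2$-associahedra $W_{\#\!\incom(\beta)}$ and of fiber products of strictly smaller $2$-associahedra over associahedra, and under $\Gamma_{2T}$ the restriction of $d$ is the direct sum of the dimension functions of the factors (a routine check). For the two remaining axioms I would split the intervals $[F,G]$ into three cases. If $F \neq F_{-1}$, then $[F,G] \subseteq \cl(G)$, so $[F,G]$ is a product (or fiber product) of intervals in the factors, each an interval in an abstract polytope by the inductive hypothesis or Prop.~\ref{prop:Kr_main}; here \textsc{(diamond)} and \textsc{(strongly connected)} reduce to the corresponding statements for products and fiber products of abstract polytopes. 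If $F = F_{-1}$ but $G \neq F_\top$, then $[F_{-1},G] = \{F_{-1}\} \cup \cl(G)$ with $\cl(G)$ again the product; \textsc{(diamond)} with $d(G) = 1$ holds because then exactly one factor is $1$-dimensional, and \textsc{(strongly connected)} reduces to the fact that the boundary of a $\geq 2$-dimensional product of abstract polytopes is connected. Finally, for the global interval $\wh{W_\bn} = [F_{-1},F_\top]$, \textsc{(diamond)} is needed only when $|\bn|+r = 4$, a finite check against the list $W_3 \cong K_3 \cong W_{100}$, $W_{20} \cong J_2 \cong W_{02}$, and $W_{11}$ (whose two codimension-$1$ faces are the two type-$3$ moves out of $F_\top$), while \textsc{(strongly connected)} amounts to showing that $\wh{W_\bn}$ is connected.

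I expect that last point — connectedness of $\wh{W_\bn}$, i.e.\ the existence for any two codimension-$1$ faces of a path through faces of codimension $\geq 1$ — to be the main obstacle. It is not available from the ``union trick'' that handles the intervals $[F,G]$ with $F \neq F_{-1}$ (where two codimension-$1$ faces of $G$ are joined by the elementwise union of their $2$-bracketings, which lies in the open interval because the two added brackets already belong to a common bracketing below $G$), precisely because two codimension-$1$ faces of $W_\bn$ may overlap. As in the four-case diagram for $K_r$ in Prop.~\ref{prop:Kr_polytope}, but now with three move types and hence considerably more configurations — two overlapping type-$1$ moves, two overlapping type-$2$ moves, two overlapping type-$3$ moves, and the mixed cases — one must connect each codimension-$1$ face to a standard one and then connect the standards, and this explicit construction is where the bulk of the work lies. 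Two secondary technical points feed into the case $F\neq F_{-1}$ and also need care: that the union trick's output still satisfies the \textsc{(partial order)} and \textsc{(marked seams are unfused)} axioms of a $2$-bracketing (both added data being restrictions of common data above $G$), and that a fiber product of abstract polytopes over an abstract polytope, formed along the associahedron forgetful maps, inherits \textsc{(diamond)} and \textsc{(strongly connected)} — for which one uses that the fibers of $\pi\colon W_{\bn'}\to K_{r'}$ are themselves products of $2$-associahedra.
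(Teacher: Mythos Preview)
Your treatment of \textsc{(extremal)} and \textsc{(flag-length)} matches the paper essentially verbatim. For \textsc{(diamond)} and \textsc{(strongly connected)}, however, the paper takes a different route. It proves \textsc{(diamond)} entirely directly (Prop.~\ref{prop:Wn_diamond}), with no induction and no appeal to $\Gamma_{2T}$: for $F\neq F_{-1}$ it runs a case analysis on the ordered pair of move types (nine cases, a representative of each displayed in a table), and for $F=F_{-1}$, $d(G)=1$ it reads off from \eqref{eq:2T_dim_reform} the three possible local valency configurations and checks each. This sidesteps the question of whether fiber products over $K_s$ inherit \textsc{(diamond)} --- a question that in your scheme is not a ``secondary technical point'' but the entire content of the case $F\neq F_{-1}$, since $\Gamma_G^{-1}$ lands $[F,G]$ in a \emph{fiber} product of intervals, not a direct product, and rank-$2$ intervals in fiber products do not obviously have two middle elements.

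For \textsc{(strongly connected)} (Prop.~\ref{prop:Wn_conn}) the paper does use the recursive decomposition, but the substantive work is concentrated in its Step~1, which establishes connectedness of $[G,F_\top]$ directly for the completed fiber products $\{F_{-1}\}\cup\prod^{K_s}_{i} W_{\bm^i}$ (the case $\ell=1$ being $\wh{W_\bn}$ itself). Two corrections to your sketch: first, the ``union trick'' does \emph{not} always produce a codimension-$2$ face even when a common lower bound exists --- when $F^{(1)}$ and $F^{(2)}$ both arise from type-$2$ moves at the same $\rho$, or both from type-$3$ moves at the same $\alpha$, the union $\wt F$ can have strictly larger codimension, and the paper handles precisely these cases by building an explicit zig-zag path in the $2$-bracketing model (the second half of Step~1). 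Second, the paper never shows, and explicitly disclaims showing, that fiber products of $2$-associahedra are abstract polytopes; it proves only the specific interval-connectedness statements needed. Your inductive plan is not wrong, but the fiber-product lemmas it requires are of the same order of difficulty as the paper's direct arguments, so relegating them to a parenthetical undersells where the work actually lies.
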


\begin{proof}
We defer the proofs of {\sc(diamond)} resp.\ {\sc(strongly connected)} to Props.\ \ref{prop:Wn_diamond} resp.\ \ref{prop:Wn_conn}, so here we only need to establish {\sc(extremal)}, {\sc(flag-length)}, and the dimension formula.

The least face of $\wh{W_\bn}$ is the face $F_{-1}$ we have added to $W_\bn$ to form $\wh{W_\bn}$, while the greatest face (in $\wh{W_\bn^\br}$) is the 2-bracketing $(\sB, \stB)$ with
\begin{gather}
\sB \coloneqq \{\{1,\ldots,r\},\{1\},\ldots,\{r\}\},
\\
\stB \coloneqq \Bigl\{ \bigl(\{1,\ldots,r\},(\{1,\ldots,n_1\},\ldots,\{1,\ldots,n_r\})\bigr)\Bigr\}
\cup
\left\{\bigl(\{i\},(\{j\})\bigr) \:\left|\:
{{1\leq i\leq r}
\atop
{1\leq j\leq n_i}}
\right.\right\}.
\nonumber
\end{gather}
This establishes {\sc(extremal)}.

To prove {\sc(flag-length)} and to show that the dimension of $W_\bn$ is $|\bn|+r-3$, we must show that if $2T^0 < \cdots < 2T^\ell$ is a maximal chain in $W_\bn^\tree$, then $\ell = |\bn| + r - 3$.
By Lemmata~\ref{lem:Wn_dim} and \ref{lem:Wn_moves}, we have $0 \leq d(2T^0) < \cdots < d(2T^\ell) \leq |\bn|+r-3$.
To prove the claim, we must show that every dimension between 0 and $|\bn|+r-3$ is represented.
For any $T^i, T^{i+1}$, we must have $d(T^i) = d(T^{i+1}) - 1$: otherwise, there exists $T' \in W_\bn^\tree$ which can be obtained by performing a single move to $T^{i+1}$, and which satisfies $d(T^i) < d(T') < d(T^{i+1})$; this would contradict the maximality of the chain.
Again by maximality, we must have $2T^\ell = F_\top$.
It remains to show $d(T^0) = 0$.
Suppose for a contradiction that $d(T^0)$ is positive.
Then by Lemma~\ref{lem:Wn_dim}, either (a) there exists $\alpha \in V_\comp^1(T_b)$ with $\#\!\incom(\beta) \geq 3$ for $(\beta) \coloneqq \incom(\alpha)$; (b) there exists $\alpha \in V_\comp^{\geq2}(T_b)$ with $\sum_{\beta \in \incom(\alpha)} \#\!\incom(\beta) \geq 2$; or (c) there exists $\alpha \in (T_s)_\inte$ with $\#\!\incom(\alpha) \geq 3$.
In these three cases we may perform a move of type 1 resp.\ type 3 resp.\ type 2 to $2T^0$, which contradicts the maximality of our chain.
\end{proof}


\begin{proposition}
\label{prop:Wn_diamond}
For any $r \geq 1$ and $\bn \in \bZ^r_{\geq0}\setminus\{\bzero\}$, $\wh{W_\bn}$ satisfies {\textsc{(diamond)}}.
\end{proposition}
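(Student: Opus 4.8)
The plan is to derive {\sc(diamond)} for $\wh{W_\bn}$ from {\sc(diamond)} for the associahedra (Prop.~\ref{prop:Kr_polytope}) together with the {\sc(recursive)} property, aided by a direct analysis of the three move types. Fix $F<G$ in $\wh{W_\bn}$ with $\rk G-\rk F=2$, i.e. (setting $d(F_{-1}):=-1$) $d(G)-d(F)=2$; the goal is $\#(F,G)=2$. I would split into the cases $F=F_{-1}$, and $F\neq F_{-1}$, the latter subdivided according to whether $G=F_\top$.

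Suppose first $F\neq F_{-1}$ and $G=2T\neq F_\top$. By Def.-Lem.~\ref{deflem:Gamma2T}, $\Gamma_{2T}$ identifies $\cl(2T)=(F_{-1},2T]$ with a product $\prod_i P_i$ where each $P_i$ is either a lower associahedron $K_m^\tree$ or a fiber product $\prod^{K_k}_\alpha W^\tree_{\ell_1,\dots,\ell_k}$ of lower $2$-associahedra, with $2T$ the maximum of $\prod_i P_i$ and $F$ some element $(x_i)_i$. Since an interval in a product is a product of intervals, and since — as one checks directly — an interval in a fiber product $\prod^Y X_i$ running from an element $(q_i)$ up to the maximum is itself the fiber product of the intervals $[q_i,F_\top^{X_i}]$ over $[f_i(q_i),F_\top^Y]$, the interval $[F,2T]$ decomposes into intervals of total rank drop $2$. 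Either two factors drop by $1$ each, whence $[F,2T]$ is a product of two $2$-element chains (a $4$-element diamond) and $\#(F,2T)=2$; or one factor $P_{i_0}$ absorbs the full drop and $[F,2T]\cong[x_{i_0},F_\top^{P_{i_0}}]$. In the latter subcase, if $P_{i_0}$ is an associahedron we are done by Prop.~\ref{prop:Kr_polytope}, and if $P_{i_0}$ is a fiber product we rewrite the interval, using the observation above, as a rank-$2$ fiber product of sections of lower associahedra and lower $2$-associahedra — these being abstract polytopes by induction on $|\bn|+r$ and by the fact that an interval in an abstract polytope is an abstract polytope — and conclude by a short induction on the number of fiber-product factors, using the surjectivity of the forgetful maps.

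Suppose next $F=F_{-1}$, so $d(2T)=1$. By Lemma~\ref{lem:Wn_dim}(a) exactly one summand of the dimension formula equals $1$. If it is a type-$1$ or type-$2$ summand, $2T$ has a unique ternary site — a seam vertex of $T_b$ below a $V^1_\comp$-vertex, or an interior vertex of $T_s$, with three incoming edges — and the only two moves are the ways of splitting off a consecutive pair, exactly as in Prop.~\ref{prop:Kr_polytope}; if it is a type-$3$ summand, arising from the unique $\alpha\in V^{\geq2}_\comp(T_b)$ with $\sum_{\beta\in\incom(\alpha)}\#\!\incom(\beta)=2$, one verifies that the stability constraint on the newly created components leaves exactly two moves available at $\alpha$. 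Finally, when $G=F_\top$ the map $\Gamma_{F_\top}$ is an isomorphism onto all of $W_\bn\setminus\{F_{-1}\}$ and so gives nothing; here $F$ is obtained from $F_\top$ by two moves, and I would show the intermediate faces are precisely the two orderings of these moves: if the two moves act at disjoint sites they commute and contribute two intermediates, while if they are nested at a single site — a seam vertex, an interior vertex of $T_s$, or a $V^{\geq2}_\comp$-vertex — the count of intermediates equals the number of codimension-$1$ faces of an associahedron through a fixed codimension-$2$ face, which is $2$ by Prop.~\ref{prop:Kr_polytope}.

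The main obstacle will be the combinatorics in the last two cases: one has to show that every single move adds an indivisible block of $2$-brackets (so that no intermediate face can straddle two different moves), that a type-$3$ move admits no "reassociation" producing extra intermediates, and that stability always leaves precisely two options at a nested site. A secondary difficulty is verifying in the $G\neq F_\top$ case that the rank-$\leq2$ fiber products produced by the {\sc(recursive)} reduction are again abstract polytopes; this is where one must use the particular structure of the forgetful maps $\pi$ rather than a general statement about fiber products of polytopes.
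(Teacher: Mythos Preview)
Your treatment of the case $F=F_{-1}$ matches the paper's Step~2 essentially verbatim. The divergence is in the case $F\neq F_{-1}$. The paper does \emph{not} split on whether $G=F_\top$; it runs a single direct argument in $W_\bn^\tree$: since $2T'$ is obtained from $2T$ by two moves, one organizes into nine cases according to the pair of move types in $\{1,2,3\}\times\{1,2,3\}$ and in each case exhibits the unique second intermediate explicitly (Table~\ref{tab:diamond_ex}). No recursion and no induction on $|\bn|+r$ is used.

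Your recursive reduction for $G\neq F_\top$ via $\Gamma_G$ is a legitimate alternative route, but it does not actually save work: the induction on $|\bn|+r$ still forces you to handle $G=F_\top$ at every level, and the fiber-product intervals you produce are not known to be polytopes a priori (you flag this yourself). The paper's direct argument avoids both issues.

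The genuine gap is in your $G=F_\top$ sketch. The dichotomy ``disjoint sites commute / a nested site reduces to an associahedron diamond'' is false, and so is the claim that ``the intermediate faces are precisely the two orderings of these moves''. Take two type-3 moves nested at the same $\alpha\in V_\comp^{\geq2}(T_b)$: the first partitions $\ba=\sum_{i=1}^q\bb^i$, the second refines $\bb^p=\sum_{j=1}^r\bc^j$. The other intermediate is \emph{not} obtained by reordering these two moves, nor does it live in any single associahedron interval; it arises by first performing the type-3 move with the finer partition $\ba=\bb^1+\cdots+\bb^{p-1}+\bc^1+\cdots+\bc^r+\bb^{p+1}+\cdots+\bb^q$ and then performing a \emph{type-1} move that regroups the $\bc^j$'s into $\bb^p$. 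Similar cross-type phenomena occur in the $2$--$3$ case and elsewhere. This is exactly the content the paper's nine-case analysis is there to handle, and it is not captured by an appeal to Prop.~\ref{prop:Kr_polytope}.
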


\begin{proof}
We must show that for every $F < G$ in $\wh{W_\bn}$ with $d(G) - d(F) = 2$, the open interval $(F,G)$ contains exactly 2 elements.
In the following steps, we prove this in the cases $F \neq F_{-1}$ and $F = F_{-1}$.

\medskip

\noindent{\it Step 1: We show that for $F < G$ in $W_\bn$ with $d(G) - d(F) = 2$, the open interval $(F,G)$ contains exactly 2 elements.}

\medskip

\noindent In this step, we work with $W_\bn^\tree$.
Fix $2T, 2T' \in W_\bn^\tree$ with $d(2T) - d(2T') = 2$.
Then $2T'$ can be obtained from $2T$ by applying two moves; we must prove that there are exactly two elements of the open interval $(2T',2T)$.
There are nine cases to consider, depending on whether each of the two moves are of type 1, 2, or 3.
This proof quickly becomes repetitive, so we only give details in the case of two type-3 moves.

Suppose that $2T'$ is indeed the result of applying two type-3 moves to $2T$.
Denote the modifications to $T_b$ are as in the upper-left and upper-right arrows of the following figure (where an arrow indicates a single move, and the adjacent number indicates the type of move):

\noindent
\begin{figure}[H]
\centering
\def\svgwidth{1.0\columnwidth}
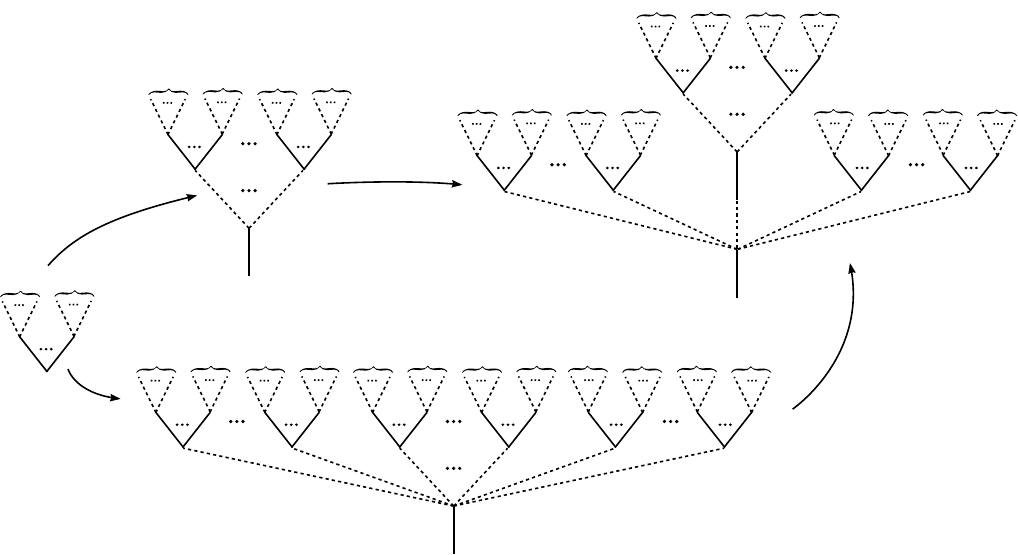
\end{figure}

\noindent (Here we have chosen a partition of $\ba$ as $\ba = \sum_{i=1}^q \bb^i$, then chosen a partition of $\bb^p$ as $\bb^p = \sum_{j=1}^r \bc^j$.)
Then there is exactly one other element of the open interval $(2T',2T)$: the one obtained from $2T$ by replacing the portion of $T_b$ on the left of the figure by the bottom configuration in the figure.
Note that this alternate path from $2T$ to $2T'$ consists not of two type-3 moves, but by a type-3 move followed by a type-1 move.

The other subcases to consider are simpler than this one.
Moreover, the other eight cases are similar to this one; rather than include the details, we show in Table~\ref{tab:diamond_ex} representative examples of the diamond property.

\begin{table}[h]
\begin{center}
\begin{tabular}{|c||c|c|c|}
\hline
&
type-1
&
type-2
&
type-3
\\
\hline
\hline
\raisebox{0.7in}{type-1}
&
$\def\svgwidth{0.2\columnwidth}
\begingroup%
  \makeatletter%
  \providecommand\color[2][]{%
    \errmessage{(Inkscape) Color is used for the text in Inkscape, but the package 'color.sty' is not loaded}%
    \renewcommand\color[2][]{}%
  }%
  \providecommand\transparent[1]{%
    \errmessage{(Inkscape) Transparency is used (non-zero) for the text in Inkscape, but the package 'transparent.sty' is not loaded}%
    \renewcommand\transparent[1]{}%
  }%
  \providecommand\rotatebox[2]{#2}%
  \ifx\svgwidth\undefined%
    \setlength{\unitlength}{63.647232bp}%
    \ifx\svgscale\undefined%
      \relax%
    \else%
      \setlength{\unitlength}{\unitlength * \real{\svgscale}}%
    \fi%
  \else%
    \setlength{\unitlength}{\svgwidth}%
  \fi%
  \global\let\svgwidth\undefined%
  \global\let\svgscale\undefined%
  \makeatother%
  \begin{picture}(1,1.00554255)%
    \put(0,0){\includegraphics[width=\unitlength]{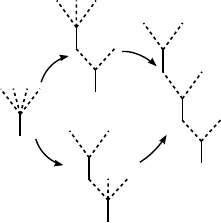}}%
    \put(0.14670349,0.71657884){\color[rgb]{0,0,0}\makebox(0,0)[lb]{\smash{$1$}}}%
    \put(0.59426858,0.79276017){\color[rgb]{0,0,0}\makebox(0,0)[lb]{\smash{$1$}}}%
    \put(0.70854134,0.23092309){\color[rgb]{0,0,0}\makebox(0,0)[lb]{\smash{$1$}}}%
    \put(0.12765816,0.22774854){\color[rgb]{0,0,0}\makebox(0,0)[lb]{\smash{$1$}}}%
  \end{picture}%
\endgroup%
$
&
$\def\svgwidth{0.35\columnwidth}
\begingroup%
  \makeatletter%
  \providecommand\color[2][]{%
    \errmessage{(Inkscape) Color is used for the text in Inkscape, but the package 'color.sty' is not loaded}%
    \renewcommand\color[2][]{}%
  }%
  \providecommand\transparent[1]{%
    \errmessage{(Inkscape) Transparency is used (non-zero) for the text in Inkscape, but the package 'transparent.sty' is not loaded}%
    \renewcommand\transparent[1]{}%
  }%
  \providecommand\rotatebox[2]{#2}%
  \ifx\svgwidth\undefined%
    \setlength{\unitlength}{124.6142832bp}%
    \ifx\svgscale\undefined%
      \relax%
    \else%
      \setlength{\unitlength}{\unitlength * \real{\svgscale}}%
    \fi%
  \else%
    \setlength{\unitlength}{\svgwidth}%
  \fi%
  \global\let\svgwidth\undefined%
  \global\let\svgscale\undefined%
  \makeatother%
  \begin{picture}(1,0.71242697)%
    \put(0,0){\includegraphics[width=\unitlength]{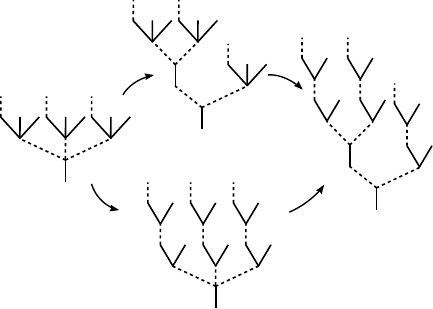}}%
    \put(0.26947433,0.52327421){\color[rgb]{0,0,0}\makebox(0,0)[lb]{\smash{$1$}}}%
    \put(0.63563722,0.53696352){\color[rgb]{0,0,0}\makebox(0,0)[lb]{\smash{$2$}}}%
    \put(0.71807676,0.22134331){\color[rgb]{0,0,0}\makebox(0,0)[lb]{\smash{$1$}}}%
    \put(0.19440235,0.21513627){\color[rgb]{0,0,0}\makebox(0,0)[lb]{\smash{$2$}}}%
  \end{picture}%
\endgroup%
$
&
$\def\svgwidth{0.35\columnwidth}
\begingroup%
  \makeatletter%
  \providecommand\color[2][]{%
    \errmessage{(Inkscape) Color is used for the text in Inkscape, but the package 'color.sty' is not loaded}%
    \renewcommand\color[2][]{}%
  }%
  \providecommand\transparent[1]{%
    \errmessage{(Inkscape) Transparency is used (non-zero) for the text in Inkscape, but the package 'transparent.sty' is not loaded}%
    \renewcommand\transparent[1]{}%
  }%
  \providecommand\rotatebox[2]{#2}%
  \ifx\svgwidth\undefined%
    \setlength{\unitlength}{113.122624bp}%
    \ifx\svgscale\undefined%
      \relax%
    \else%
      \setlength{\unitlength}{\unitlength * \real{\svgscale}}%
    \fi%
  \else%
    \setlength{\unitlength}{\svgwidth}%
  \fi%
  \global\let\svgwidth\undefined%
  \global\let\svgscale\undefined%
  \makeatother%
  \begin{picture}(1,0.7063177)%
    \put(0,0){\includegraphics[width=\unitlength]{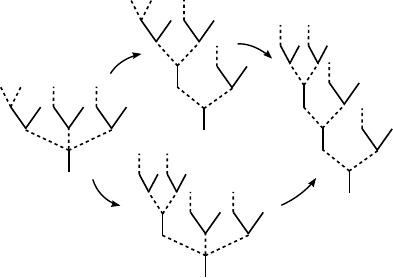}}%
    \put(0.28544713,0.56205987){\color[rgb]{0,0,0}\makebox(0,0)[lb]{\smash{1}}}%
    \put(0.63370591,0.60135067){\color[rgb]{0,0,0}\makebox(0,0)[lb]{\smash{3}}}%
    \put(0.21579529,0.16558055){\color[rgb]{0,0,0}\makebox(0,0)[lb]{\smash{3}}}%
    \put(0.7533642,0.15843693){\color[rgb]{0,0,0}\makebox(0,0)[lb]{\smash{1}}}%
  \end{picture}%
\endgroup%
$
\\
\hline
\raisebox{0.5in}{type-2}
&
$\def\svgwidth{0.2\columnwidth}
\begingroup%
  \makeatletter%
  \providecommand\color[2][]{%
    \errmessage{(Inkscape) Color is used for the text in Inkscape, but the package 'color.sty' is not loaded}%
    \renewcommand\color[2][]{}%
  }%
  \providecommand\transparent[1]{%
    \errmessage{(Inkscape) Transparency is used (non-zero) for the text in Inkscape, but the package 'transparent.sty' is not loaded}%
    \renewcommand\transparent[1]{}%
  }%
  \providecommand\rotatebox[2]{#2}%
  \ifx\svgwidth\undefined%
    \setlength{\unitlength}{59.44533584bp}%
    \ifx\svgscale\undefined%
      \relax%
    \else%
      \setlength{\unitlength}{\unitlength * \real{\svgscale}}%
    \fi%
  \else%
    \setlength{\unitlength}{\svgwidth}%
  \fi%
  \global\let\svgwidth\undefined%
  \global\let\svgscale\undefined%
  \makeatother%
  \begin{picture}(1,0.89524023)%
    \put(0,0){\includegraphics[width=\unitlength]{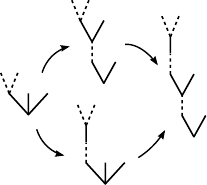}}%
    \put(0.16248998,0.63306905){\color[rgb]{0,0,0}\makebox(0,0)[lb]{\smash{$2$}}}%
    \put(0.64169124,0.71463525){\color[rgb]{0,0,0}\makebox(0,0)[lb]{\smash{$1$}}}%
    \put(0.76404136,0.11308469){\color[rgb]{0,0,0}\makebox(0,0)[lb]{\smash{$2$}}}%
    \put(0.14209843,0.10968576){\color[rgb]{0,0,0}\makebox(0,0)[lb]{\smash{$1$}}}%
  \end{picture}%
\endgroup%
$
&
$\def\svgwidth{0.2\columnwidth}
\begingroup%
  \makeatletter%
  \providecommand\color[2][]{%
    \errmessage{(Inkscape) Color is used for the text in Inkscape, but the package 'color.sty' is not loaded}%
    \renewcommand\color[2][]{}%
  }%
  \providecommand\transparent[1]{%
    \errmessage{(Inkscape) Transparency is used (non-zero) for the text in Inkscape, but the package 'transparent.sty' is not loaded}%
    \renewcommand\transparent[1]{}%
  }%
  \providecommand\rotatebox[2]{#2}%
  \ifx\svgwidth\undefined%
    \setlength{\unitlength}{57.922544bp}%
    \ifx\svgscale\undefined%
      \relax%
    \else%
      \setlength{\unitlength}{\unitlength * \real{\svgscale}}%
    \fi%
  \else%
    \setlength{\unitlength}{\svgwidth}%
  \fi%
  \global\let\svgwidth\undefined%
  \global\let\svgscale\undefined%
  \makeatother%
  \begin{picture}(1,0.85848909)%
    \put(0,0){\includegraphics[width=\unitlength]{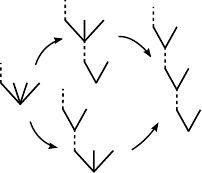}}%
    \put(0.13299599,0.62601967){\color[rgb]{0,0,0}\makebox(0,0)[lb]{\smash{$2$}}}%
    \put(0.62479551,0.70973026){\color[rgb]{0,0,0}\makebox(0,0)[lb]{\smash{$2$}}}%
    \put(0.75036224,0.09236485){\color[rgb]{0,0,0}\makebox(0,0)[lb]{\smash{$2$}}}%
    \put(0.11206835,0.08887656){\color[rgb]{0,0,0}\makebox(0,0)[lb]{\smash{$2$}}}%
  \end{picture}%
\endgroup%
$
&
$\def\svgwidth{0.2\columnwidth}
\begingroup%
  \makeatletter%
  \providecommand\color[2][]{%
    \errmessage{(Inkscape) Color is used for the text in Inkscape, but the package 'color.sty' is not loaded}%
    \renewcommand\color[2][]{}%
  }%
  \providecommand\transparent[1]{%
    \errmessage{(Inkscape) Transparency is used (non-zero) for the text in Inkscape, but the package 'transparent.sty' is not loaded}%
    \renewcommand\transparent[1]{}%
  }%
  \providecommand\rotatebox[2]{#2}%
  \ifx\svgwidth\undefined%
    \setlength{\unitlength}{56.403904bp}%
    \ifx\svgscale\undefined%
      \relax%
    \else%
      \setlength{\unitlength}{\unitlength * \real{\svgscale}}%
    \fi%
  \else%
    \setlength{\unitlength}{\svgwidth}%
  \fi%
  \global\let\svgwidth\undefined%
  \global\let\svgscale\undefined%
  \makeatother%
  \begin{picture}(1,0.9008697)%
    \put(0,0){\includegraphics[width=\unitlength]{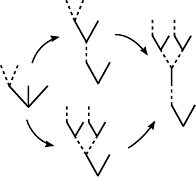}}%
    \put(0.12073703,0.67135632){\color[rgb]{0,0,0}\makebox(0,0)[lb]{\smash{$2$}}}%
    \put(0.62577795,0.75732077){\color[rgb]{0,0,0}\makebox(0,0)[lb]{\smash{$3$}}}%
    \put(0.75472548,0.12333318){\color[rgb]{0,0,0}\makebox(0,0)[lb]{\smash{$1$}}}%
    \put(0.09924592,0.11975097){\color[rgb]{0,0,0}\makebox(0,0)[lb]{\smash{$2$}}}%
  \end{picture}%
\endgroup%
$
\\
\hline
\raisebox{0.5in}{type-3}
&
$\def\svgwidth{0.2\columnwidth}
\begingroup%
  \makeatletter%
  \providecommand\color[2][]{%
    \errmessage{(Inkscape) Color is used for the text in Inkscape, but the package 'color.sty' is not loaded}%
    \renewcommand\color[2][]{}%
  }%
  \providecommand\transparent[1]{%
    \errmessage{(Inkscape) Transparency is used (non-zero) for the text in Inkscape, but the package 'transparent.sty' is not loaded}%
    \renewcommand\transparent[1]{}%
  }%
  \providecommand\rotatebox[2]{#2}%
  \ifx\svgwidth\undefined%
    \setlength{\unitlength}{67.82891176bp}%
    \ifx\svgscale\undefined%
      \relax%
    \else%
      \setlength{\unitlength}{\unitlength * \real{\svgscale}}%
    \fi%
  \else%
    \setlength{\unitlength}{\svgwidth}%
  \fi%
  \global\let\svgwidth\undefined%
  \global\let\svgscale\undefined%
  \makeatother%
  \begin{picture}(1,0.81409603)%
    \put(0,0){\includegraphics[width=\unitlength]{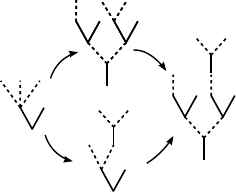}}%
    \put(0.17414208,0.5558869){\color[rgb]{0,0,0}\makebox(0,0)[lb]{\smash{$3$}}}%
    \put(0.5941146,0.62737162){\color[rgb]{0,0,0}\makebox(0,0)[lb]{\smash{$1$}}}%
    \put(0.7013424,0.10017202){\color[rgb]{0,0,0}\makebox(0,0)[lb]{\smash{$3$}}}%
    \put(0.1562709,0.09719319){\color[rgb]{0,0,0}\makebox(0,0)[lb]{\smash{$1$}}}%
  \end{picture}%
\endgroup%
$
&
$\def\svgwidth{0.2\columnwidth}
\begingroup%
  \makeatletter%
  \providecommand\color[2][]{%
    \errmessage{(Inkscape) Color is used for the text in Inkscape, but the package 'color.sty' is not loaded}%
    \renewcommand\color[2][]{}%
  }%
  \providecommand\transparent[1]{%
    \errmessage{(Inkscape) Transparency is used (non-zero) for the text in Inkscape, but the package 'transparent.sty' is not loaded}%
    \renewcommand\transparent[1]{}%
  }%
  \providecommand\rotatebox[2]{#2}%
  \ifx\svgwidth\undefined%
    \setlength{\unitlength}{66.309312bp}%
    \ifx\svgscale\undefined%
      \relax%
    \else%
      \setlength{\unitlength}{\unitlength * \real{\svgscale}}%
    \fi%
  \else%
    \setlength{\unitlength}{\svgwidth}%
  \fi%
  \global\let\svgwidth\undefined%
  \global\let\svgscale\undefined%
  \makeatother%
  \begin{picture}(1,0.81285717)%
    \put(0,0){\includegraphics[width=\unitlength]{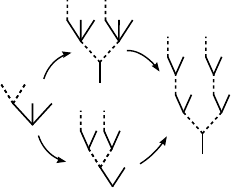}}%
    \put(0.14805066,0.54762778){\color[rgb]{0,0,0}\makebox(0,0)[lb]{\smash{$3$}}}%
    \put(0.57764763,0.6207507){\color[rgb]{0,0,0}\makebox(0,0)[lb]{\smash{$2$}}}%
    \put(0.68733274,0.08146935){\color[rgb]{0,0,0}\makebox(0,0)[lb]{\smash{$3$}}}%
    \put(0.12976993,0.07842226){\color[rgb]{0,0,0}\makebox(0,0)[lb]{\smash{$2$}}}%
  \end{picture}%
\endgroup%
$
&
$\def\svgwidth{0.2\columnwidth}
\begingroup%
  \makeatletter%
  \providecommand\color[2][]{%
    \errmessage{(Inkscape) Color is used for the text in Inkscape, but the package 'color.sty' is not loaded}%
    \renewcommand\color[2][]{}%
  }%
  \providecommand\transparent[1]{%
    \errmessage{(Inkscape) Transparency is used (non-zero) for the text in Inkscape, but the package 'transparent.sty' is not loaded}%
    \renewcommand\transparent[1]{}%
  }%
  \providecommand\rotatebox[2]{#2}%
  \ifx\svgwidth\undefined%
    \setlength{\unitlength}{70.310768bp}%
    \ifx\svgscale\undefined%
      \relax%
    \else%
      \setlength{\unitlength}{\unitlength * \real{\svgscale}}%
    \fi%
  \else%
    \setlength{\unitlength}{\svgwidth}%
  \fi%
  \global\let\svgwidth\undefined%
  \global\let\svgscale\undefined%
  \makeatother%
  \begin{picture}(1,0.80764687)%
    \put(0,0){\includegraphics[width=\unitlength]{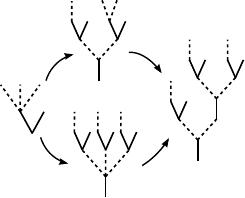}}%
    \put(0.14631721,0.55205575){\color[rgb]{0,0,0}\makebox(0,0)[lb]{\smash{$3$}}}%
    \put(0.55146538,0.62101717){\color[rgb]{0,0,0}\makebox(0,0)[lb]{\smash{$3$}}}%
    \put(0.65490821,0.11242686){\color[rgb]{0,0,0}\makebox(0,0)[lb]{\smash{$1$}}}%
    \put(0.12907685,0.10955318){\color[rgb]{0,0,0}\makebox(0,0)[lb]{\smash{$3$}}}%
  \end{picture}%
\endgroup%
$
\\
\hline
\end{tabular}
\end{center}
\caption{In this step we show that for any $2T, 2T' \in W^\tree_\bn$ with $d(2T)-d(2T')=2$, the open interval $(2T',2T)$ contains two elements.
Here we illustrate this fact in nine cases: $2T'$ can be obtained from $2T$ by applying two moves, and these moves can be of type 1, 2, or 3.
In each case, the four configurations are the bubble trees $T_b$ of a stable tree-pair; the seam trees can be inferred from the bubble tree.
On the left is $2T$, on the right is $2T'$, and the remaining two stable tree-pairs are the two elements of $(2T',2T)$.
The arrows indicate moves, and their labels are the types.}
\label{tab:diamond_ex}
\end{table}

\medskip

\noindent{\it Step 2: We show that for $G \in W_\bn$ with $d(G) = 1$, $(F_{-1},G)$ contains exactly 2 elements.}

\medskip

\noindent In this step, we again work with $W_\bn^\tree$.
Fix $2T \in W_\bn^\tree$ with $d(2T) = 1$.
It follows from Lemma~\ref{lem:Wn_dim} that the vertices in $2T$ satisfy exactly one of three valency conditions, which we treat in cases below.
\begin{itemize}
\item[] {\it Every $\rho \in (T_s)_\inte$ has $\#\!\incom(\rho) = 2$.
There is a single $\alpha \in V_\comp^1(T_b)$ with $\#\!\incom(\beta) = 3$, where $\beta$ is the incoming neighbor of $\alpha$; every $\gamma \in V_\comp^1(T_b) \setminus \{\alpha\}$ with $\incom(\gamma) \eqqcolon (\delta)$ has $\#\!\incom(\delta) = 2$; and every $\gamma \in V_\comp^{\geq2}(T_b)$ with $\incom(\gamma) \eqqcolon (\delta_1,\delta_2)$ has $\#\!\incom(\delta_1)+\#\!\incom(\delta_2) = 1$.} In this case, the only moves that can be performed on $2T$ are type-1 moves based at $\alpha$.
If we denote $\incom(\beta) \eqqcolon (\gamma_1,\gamma_2,\gamma_3)$, then the type-1 moves at $\alpha$ correspond to proper consecutive subsets of $(\gamma_1,\gamma_2,\gamma_3)$ of length at least 2; $(\gamma_1,\gamma_2)$ and $(\gamma_2,\gamma_3)$ are the only such subsets.

\medskip

\item[] {\it Every $\rho \in (T_s)_\inte$ has $\#\!\incom(\rho) = 2$.
There is a single $\alpha \in V_\comp^{\geq2}(T_b)$ with $\#\!\incom(\beta_1)+\#\!\incom(\beta_2) = 2$, where $\beta_1,\beta_2$ are the incoming neighbors of $\alpha$; every $\gamma \in V_\comp^1(T_b)$ with $\incom(\gamma) \eqqcolon (\delta)$ has $\#\!\incom(\delta) = 2$; and every $\gamma \in V_\comp^{\geq2}(T_b) \setminus \{\alpha\}$ with $\incom(\gamma) \eqqcolon (\delta_1,\delta_2)$ has $\#\!\incom(\delta_1)+\#\!\incom(\delta_2) = 1$.} There are two subcases: either (a) $(\#\!\incom(\beta_1),\#\!\incom(\beta_2))=(1,1)$ or (b) $(\#\!\incom(\beta_1),\#\!\incom(\beta_2)) \in \{(2,0),(0,2)\}$.
If (a) holds, the only moves that can be performed on $2T$ are type-3 moves based at $\alpha$.
In the notation of the definition of type-3 moves, $\ba=(1,1)$, and the type-3 moves at $\alpha$ correspond to choices of $\bb^1, \ldots, \bb^q \in \bZ_{\geq0}^2\setminus\{\bzero\}$ with $\sum_j \bb^j = \ba$.
There are two such choices: $\bb^1 = (1,0)$ and $\bb^2 = (0,1)$, or $\bb^1 = (0,1)$ and $\bb^2 = (1,0)$.
On the other hand, if (b) holds, we can either perform a type-1 move or a type-3 move at $\alpha$, and there is only possible move of each type.

\medskip

\item[] {\it There is a single $\rho \in (T_s)_\inte$ with $\#\!\incom(\rho)=3$.
Every $\alpha \in V_\comp^1(T_b)$ with $\incom(\alpha) \eqqcolon (\beta)$ has $\#\!\incom(\beta) = 2$; every $\alpha \in V_\comp^{\geq2}(T_b)$ with $\incom(\alpha) \eqqcolon (\beta_1,\beta_2)$ has $\#\!\incom(\beta_1)+\#\!\incom(\beta_2) = 1$; and every $\sigma \in (T_s)_\inte \setminus \{\rho\}$ has $\#\!\incom(\sigma) = 2$.} In this case, the only moves that can be performed on $2T$ are type-2 moves based at $\rho$.
Denote $\incom(\rho) \eqqcolon (\sigma_1,\sigma_2,\sigma_3)$.
The type-2 moves based at $\rho$ correspond to (1) a choice of a proper consecutive subset of $\incom(\rho)$ of length at least 2 (of which there are two), and (2) for every $\alpha \in V_\comp^{\geq2}(T_b)$ with $f(\alpha)=\rho$, a choice of $q\geq0$ and $\bb^1,\ldots,\bb^q \in \bZ_{\geq0}^\ell\setminus\{\bzero\}$ with $\sum_j \bb^j = \ba$, where $\ba \in \bZ_{\geq0}^\ell\setminus\{\bzero\}$ is defined by setting $\incom(\alpha) \eqqcolon (\beta_1,\beta_2,\beta_3)$ and $a_i \coloneqq \#\!\incom(\beta_{p+i})$.
By assumption, for every such $\alpha$, we have $|\ba|=1$.
Therefore there is exactly one choice of the decomposition $\ba = \sum_j \bb^j$, so there are two type-2 moves based at $\rho$.
\end{itemize}
\end{proof}

\begin{proposition}
\label{prop:Wn_conn}
For any $r \geq 1$ and $\bn \in \bZ^r_{\geq0}\setminus\{\bzero\}$, $\wh{W_\bn}$ is strongly connected.
\end{proposition}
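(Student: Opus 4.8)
The plan is to mirror the three-part argument that establishes \textsc{(strongly connected)} for $\wh{K_r}$ in the proof of Prop.~\ref{prop:Kr_polytope}, now accommodating the three move types and the fiber-product shape of the recursive decomposition. We induct on $\dim W_\bn = |\bn| + r - 3$. The cases $\dim W_\bn \leq 1$ (in particular $\bn = (1)$) are vacuous, and for $r = 1$ the claim is Prop.~\ref{prop:Kr_polytope} via the isomorphism $W_{(n)} \cong K_n$ of Lemma~\ref{lem:WnKn}; so we may assume $r \geq 2$ and $\dim W_\bn \geq 2$. We will use the already-established \textsc{(diamond)} property of $\wh{W_\bn}$ (Prop.~\ref{prop:Wn_diamond}) and the inductive hypothesis that $\wh{W_\bm}$ is an abstract polytope whenever $\dim W_\bm < \dim W_\bn$.

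\textbf{Part 1: $\wh{W_\bn}$ is connected.} Since $\wh{W_\bn}$ is graded and, by \textsc{(diamond)}, every codimension-$2$ face lies below exactly two facets, it suffices to show that the graph on the facets of $W_\bn$, with $\phi \sim \phi'$ whenever $\phi$ and $\phi'$ share a codimension-$2$ face, is connected: an arbitrary pair of proper faces can then be joined by rising to facets and traversing this graph. In $W_\bn^\tree$ a facet is the result of a single type-1, type-2, or type-3 move applied to $F_\top$, so facets fall into three explicit families, giving six cases (up to symmetry) for a pair. The idea is to use type-2 flips --- whose effect on the seam tree is a single move of $K_r$ --- together with the connectedness of $\wh{K_r}$ (Prop.~\ref{prop:Kr_polytope}) to move any facet onto one lying over a fixed codimension-$1$ face of $K_r$, and then to connect the facets over a fixed base by exhibiting explicit intermediate codimension-$2$ faces, exactly as in the four-case argument for $\wh{K_r}$. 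I expect this to be the main obstacle: a facet of $W_\bn$ can simultaneously record a collision of seams and of marked points, so the required paths are more intricate than in the associahedron case, and one must check stability and properness of every intermediate face.

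\textbf{Part 2: for $2T \in W_\bn^\tree$ with $d(2T) \geq 2$, the proper part of $[F_{-1},2T]$ is connected.} By \textsc{(recursive)} (Def.-Lem.~\ref{deflem:Gamma2T}), $[F_{-1},2T]$ is $\{F_{-1}\}$ adjoined to a product whose factors are $2$-associahedra $W_{\#\!\incom(\beta)}$ and fiber products $\prod^{K_{\#\!\incom(\rho)}} W^\tree_{\ell_1,\dots,\ell_k}$, each of dimension at most $d(2T) < \dim W_\bn$ --- unless $2T = F_\top$, in which case the decomposition is trivial and the statement is Part 1. Each factor is thus $\wh{W_\bm}$ for some $\bm$ with $\dim W_\bm < \dim W_\bn$, an abstract polytope by the inductive hypothesis (or by Prop.~\ref{prop:Kr_polytope} when $W_\bm$ is an associahedron), and each associahedron $\wh{K_s}$ serving as the base of a fiber product is an abstract polytope by Prop.~\ref{prop:Kr_polytope}; moreover the total dimension $d(2T)$ is at least $2$. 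It then remains to prove a self-contained lemma: a finite product of strongly connected abstract polytopes, some of whose factors are themselves fiber products over associahedra along the forgetful maps, has connected proper part as soon as its total dimension is at least $2$. For ordinary products this is the familiar argument of rising in one factor, moving across, and descending; for a fiber-product factor one additionally uses the surjectivity of the forgetful maps and the connectedness of $\wh{K_s}$ to adjust the common $K_s$-coordinate.

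\textbf{Part 3: for $F_{-1} < F < G$ in $W_\bn$ with $d(G) - d(F) \geq 3$, the proper part of $[F,G]$ is connected.} Working in $W_\bn^\br$, write $F = (\sB',\stB')$ and $G = (\sB,\stB)$; it suffices to connect any two $(\wt\sB^1,\wt\stB^1), (\wt\sB^2,\wt\stB^2) \in (F,G)$ lying at codimension $1$ in $G$. Following the end of the $\wh{K_r}$ argument, set $\wh{\mathcal B} := (\wt\sB^1 \cup \wt\sB^2,\, \wt\stB^1 \cup \wt\stB^2)$, with the partial orders on the blocks $\stB_{B_0}$ induced from those of $(\sB',\stB')$ --- legitimate because the data of each $(\wt\sB^j,\wt\stB^j)$ is contained in the data of $(\sB',\stB')$. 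The crux is that $\wh{\mathcal B}$ is again a valid $2$-bracketing: \textsc{(1-bracketing)}, \textsc{(root and marked points)}, and the first clause of \textsc{(marked seams are unfused)} are immediate from the corresponding properties of $(\wt\sB^1,\wt\stB^1)$; the cross-compatibility needed for \textsc{(2-bracketing)} and \textsc{(partial order)} follows from the containment of both $(\wt\sB^j,\wt\stB^j)$ in $(\sB',\stB')$ --- this is exactly where the hypothesis $F \neq F_{-1}$ enters --- and the second clause of \textsc{(marked seams are unfused)} requires a short case analysis that combines this clause for $(\wt\sB^1,\wt\stB^1)$ and $(\wt\sB^2,\wt\stB^2)$ with the local structure of moves. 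A further short case check on move types (in the spirit of the proof of \textsc{(diamond)}) shows $\wh{\mathcal B}$ is obtained from one of $(\wt\sB^j,\wt\stB^j)$ by a single move, so $d(\wh{\mathcal B}) = d(G) - 2$; since $d(F) \leq d(G) - 3 < d(\wh{\mathcal B})$, we get $\wh{\mathcal B} \in (F,G)$, and $\bigl((\wt\sB^1,\wt\stB^1),\, \wh{\mathcal B},\, (\wt\sB^2,\wt\stB^2)\bigr)$ is a path through cover relations. Finally, Parts 1--3 cover every section required by \textsc{(strongly connected)}: when $F = F_{-1}$ the gap $d(G)-d(F) \geq 3$ forces $d(G) \geq 2$, so Part 2 applies (or Part 1 when $G = F_\top$); when $F \neq F_{-1}$, Part 3 applies. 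This closes the induction.
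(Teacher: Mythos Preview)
Your three-part plan roughly parallels the paper's, but Part~3 contains a genuine error, and the same missing idea undermines the ``self-contained lemma'' you defer in Part~2. You claim that the union $\wt F \coloneqq (\wt\sB^1 \cup \wt\sB^2,\, \wt\stB^1 \cup \wt\stB^2)$ of two codimension-$1$ faces below $G$ is always obtained from one of them by a single move, hence $d(\wt F) = d(G) - 2$. This is precisely where the analogy with $K_r$ breaks down: in $K_r^\br$ a codimension-$1$ face adds exactly \emph{one} $1$-bracket, so the union of two distinct such faces adds two and has codimension~$2$; but in $W_\bn^\br$ a single type-$2$ or type-$3$ move may add \emph{several} $2$-brackets at once, and the union of two such families can have codimension strictly greater than~$2$. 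Concretely, take $\bn = (2,2)$, $G = F_\top$ (so $d(G) = 3$), and let the two facets be the type-$3$ moves at the root given by the ordered partitions
\[
\bigl((\{1\},\{1\}),\,(\{2\},\emptyset),\,(\emptyset,\{2\})\bigr)
\qquad\text{and}\qquad
\bigl((\{1\},\emptyset),\,(\emptyset,\{1\}),\,(\{2\},\{2\})\bigr)
\]
of the root $2$-bracket. Each has codimension~$1$; their union is a valid $2$-bracketing (the first block of each partition is the disjoint union of the first two blocks of the other), but reaching it from $G$ requires three moves, so $d(\wt F) = 0$. Setting $F \coloneqq \wt F$ gives $d(G) - d(F) = 3$ with $F \neq F_{-1}$, and your path $(\wt\sB^1,\wt\stB^1) \to \wt F \to (\wt\sB^2,\wt\stB^2)$ leaves the open interval $(F,G)$ at its midpoint.

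The paper meets exactly this obstruction, but organizes the proof so as to handle it. Rather than a direct union argument in $W_\bn$, it first proves (its Step~1) that for any fiber product $P = \{F_{-1}\} \cup \prod^{K_s}_i W_{\bm^i}$ and any $G \in P$, the interval $[G,F_\top^P]$ is connected. The substantive case is $G \neq F_{-1}$: when the two codimension-$1$ faces both arise from type-$3$ (or type-$2$) moves at the same location, the paper constructs an explicit zig-zag path that trades one partition for the other block by block, staying at codimension $1$ or $2$ and above $G$ throughout. Steps~2 and~3 of the paper then both reduce, via \textsc{(recursive)}, to Step~1 applied to each fiber-product factor of $\cl(G)$, together with the already-established strong connectedness of the associahedral factors. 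Your Part~2's deferred lemma is essentially the $G = F_{-1}$ half of this Step~1, and your Part~3 would be subsumed by its $G \neq F_{-1}$ half --- but that half genuinely requires the block-by-block path construction, not the single-union shortcut.
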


\begin{proof}
We must show that for every $F < G$ in $\wh{W_\bn}$ with $d(G) - d(F) \geq 3$, $[F,G]$ is connected, i.e.\ any two elements in the open interval $(F,G)$ can be connected by a path contained in $(F,G)$.
In the following steps, we prove this in the cases $F \neq F_{-1}$ and $F = F_{-1}$.

\medskip

\noindent{\it Step 1: For $s\geq 1$ and $\bm^1,\ldots,\bm^\ell \in \bZ^s_{\geq0}\setminus\{\bzero\}$, define a dimension function on the completed fiber product $P\coloneqq \{F_{-1}\} \cup \prod_{1\leq i\leq \ell}^{K_s^\tree} W_{\bm^i}^\tree$ like so:
\begin{align}
\label{eq:d_on_fiber_product}
d\bigl(T,(2T^{(i)})_i\bigr) \coloneqq d(T)+\sum_{1\leq i\leq\ell} \bigl(d(2T^{(i)})-d(T)\bigr), \qquad d(F_{-1}) \coloneqq -1.
\end{align}
Denote the maximal element of $P$ by $F_\top$.
For any $G \in P$ with $d(F_\top)-d(G) \geq 3$, the interval $[G,F_\top]$ is connected.
}

\medskip

\noindent We divide this step into two cases, depending on whether or not $G$ is the minimal element $F_{-1}$.

First, suppose $G = F_{-1}$.
The condition $d(F_\top) - d(G) \geq 3$ translates into the condition $s-2+\sum_{1\leq i\leq \ell} (|\bm^i|-1) \geq 2$, which implies that at least one of these conditions holds:
\begin{itemize}
\item[(a)] $s \geq 4$.

\item[(b)] $s \geq 3$ and there exists $i$ with $|\bm^i| \geq 2$.

\item[(c)] There exist $i \neq j$ with $|\bm^i| \geq 2$ and $|\bm^j| \geq 2$.

\item[(d)] There exists $i$ with $|\bm^i| \geq 3$.
\end{itemize}
In case (a), it suffices to show that for any face $H \in (F_{-1},F_\top)$ with $d(H) = d(F^\top)-1$, there is a path in $(F_{-1},F_\top)$ from $H$ to the following element of $P$:

\noindent
\begin{figure}[H]
\centering
\def\svgwidth{0.65\columnwidth}
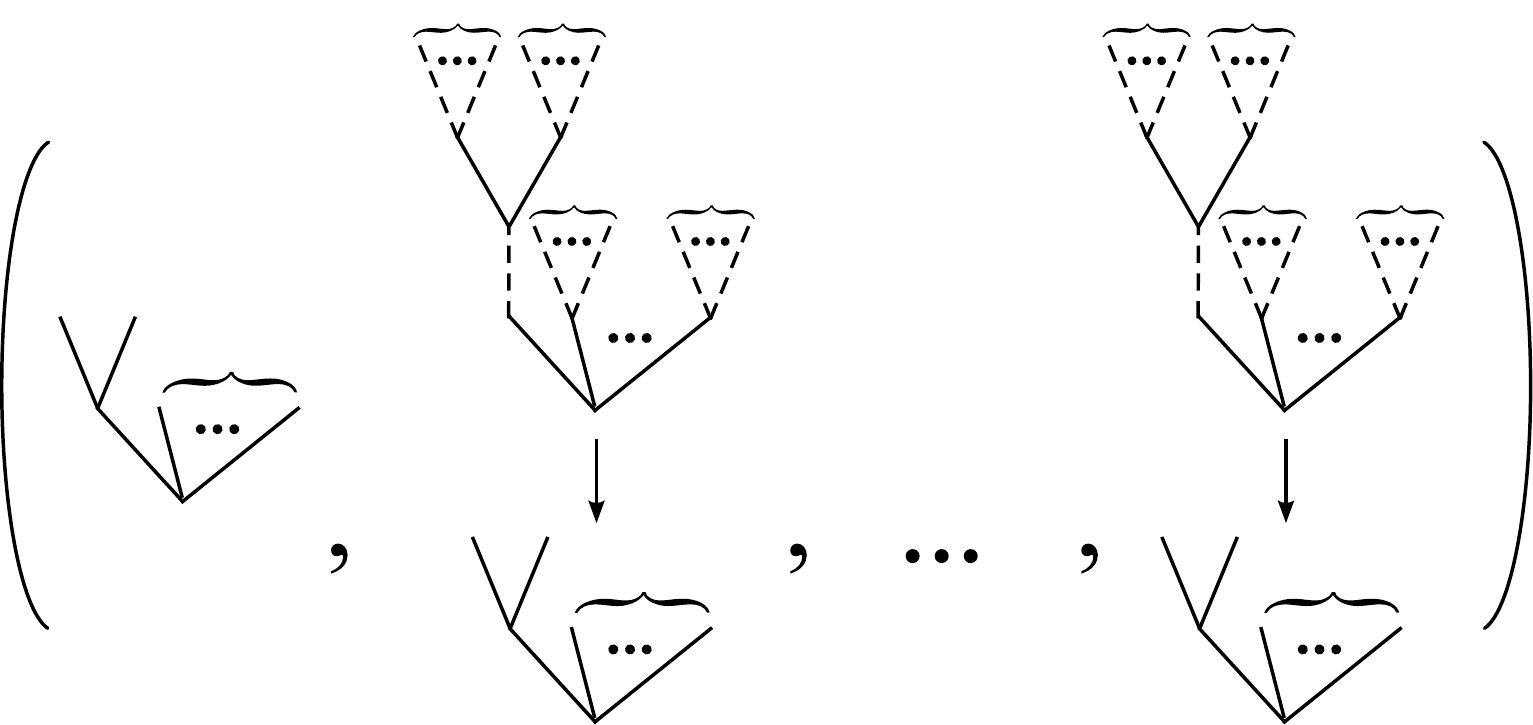
\end{figure}

\noindent This can be shown by an argument similar to the one made in Prop.~\ref{prop:Kr_polytope} to prove the {\sc(strongly connected)} property for the associahedra.
The same is true for cases (b--d).

Second, we must show that $[G,F_\top]$ is connected for $G \neq F_{-1}$.
We do so by using the 2-bracketing model for 2-associahedra.
Write $G = \bigl(\wh\sB,(\wh{\stB}_i)_{i=1}^\ell\bigr)$.
Throughout this proof we often abbreviate 2-bracketings by their collection of 2-bracketings and omit the underlying 1-bracketing, as this 1-bracketing will be evident.
Fix distinct $F^{(j)}\coloneqq\bigl(\sB^{(j)},(\stB^{(j)}_i)_{i=1}^\ell\bigr) \in (G,F_\top) \subset P$ for $j \in \{1,2\}$; we must show that there is a path between these elements within $(G,F_\top)$.
Without loss of generality, we may assume $d(F^{(j)})=d(F_\top)-1$ for $j \in \{1,2\}$.
It follows that each $F^{(j)}$ can be obtained in exactly one of the following ways, where we write $F_\top \eqqcolon \bigl(\sB,(\stB_i)_i\bigr)$:
\begin{itemize}
\item[(1)] Perform a type-1 move on a single $\stB_i$.

\item[(2)] Perform a single move $\sB \to \sB'$, then perform a type-2 move $\stB_i \to \stB'_i$ for every $1 \leq i \leq \ell$, such that $\pi(\stB'_i) = \sB'$.

\item[(3)] Perform a type-3 move on a single $\stB_i$.
\end{itemize}
Define $\wt F\coloneqq \bigl(\sB^{(1)}\cup\sB^{(2)},(\stB^{(1)}_i\cup\stB^{(2)}_i)_i\bigr)$.
Then $\wt F$ is again an element of the fiber product $P$, where for $B_0 \in \sB^{(1)} \cup \sB^{(2)}$ and $1 \leq i \leq \ell$ the partial order on $(\stB^{(1)}_i\cup\stB^{(2)}_i)_{B_0}$ is induced by the partial order on the larger collection $(\wh{\stB}_i)_{B_0}$.
If (1) holds for $F^{(j)}$ for both $j=1$ and $j=2$, then $d(\wt F) = d(F_\top)-2$; we can then connect $F^{(1)}$ and $F^{(2)}$ by the path $F^{(1)}\to \wt F\to F^{(2)}$.
The same is true in all of the other cases, except in the cases that (2) holds for $j \in \{1,2\}$, or that (3) holds for $j\in\{1,2\}$: here, $d(\wt F)$ could be less than $d(F_\top)$.
The constructions in these two situations are similar, so we assume that (3) holds for $j \in \{1,2\}$.
Denote by $i_1$ resp.\ $i_2$ the indices with the property that $F^{(j)}$ is obtained from $F_\top$ by performing a type-3 move on $\stB_{i_j}$.
If $i_1\neq i_2$, then $d(\wt F) = d(F_\top)-2$ and we can again use the path $F^{(1)} \to \wt F \to F^{(2)}$.
Suppose, on the other hand, that $i_1=i_2\eqqcolon i_0$.
If $d(\wh{\stB}_{i_0})=d(\stB_{i_0})-2$, we can again use the construction described above.
Otherwise, it suffices to show that there is a path from $\stB^{(1)}_{i_0}$ to $\stB^{(2)}_{i_0}$ within
$(\wh{\stB}_{i_0},F_\top^{W_{\bm^{i_0}}})$.
Towards this, we express $\stB^{(1)}_{i_0}$ and $\stB^{(2)}_{i_0}$ like so:
\begin{align}
\stB^{(j)}_{i_0} &= \Bigl\{\bigl(\{1,\ldots,s\},(\{1,\ldots,m^{i_0}_1\},\ldots,\{1,\ldots,m^{i_0}_s\})\bigr)\Bigr\}
\cup
\left\{\bigl(\{k\},(\{\ell\})\bigr) \:\left|\: {{1\leq k\leq s}\atop{1\leq \ell\leq m^{i_0}_k}}\right.\right\}
\\
&\hspace{2.5in} \cup \left\{\bigl(\{1,\ldots,s\},(A^{(j)}_{t,1},\ldots,A^{(j)}_{t,s})\bigr) \:\Big|\: 1 \leq t \leq q^{(j)}\right\}
\nonumber\\
&\eqqcolon \stB_\top \cup \left\{\bigl(\{1,\ldots,s\},(A^{(j)}_{t,1},\ldots,A^{(j)}_{t,s})\bigr) \:\Big|\: 1 \leq t \leq q^{(j)}\right\}.
\nonumber
\end{align}
To define our path from $\stB^{(1)}_{i_0}$ to $\stB^{(2)}_{i_0}$, we begin by examining $(A^{(1)}_{1,1},\ldots,A^{(1)}_{1,s})$ and $(A^{(2)}_{1,1},\ldots,A^{(2)}_{1,s})$.
If these sequences of sets are equal, we do nothing.
If they are not equal, then because $\stB^{(1)}_{i_0}$, $\stB^{(2)}_{i_0}$ are bounded from below by $\wh\stB_{i_0}$, there must exist $q' \geq 2$ such that one of the following equalities holds:
\begin{align}
\label{eq:conn_containments}
(A^{(1)}_{1,1},\ldots,A^{(1)}_{1,s}) &= (A^{(2)}_{1,1}\cup\cdots\cup A^{(2)}_{q',1},\ldots,A^{(2)}_{1,s}\cup\cdots\cup A^{(2)}_{q',s}), \\
(A^{(2)}_{1,1},\ldots,A^{(2)}_{1,s}) &= (A^{(1)}_{1,1}\cup\cdots\cup A^{(1)}_{q',1},\ldots,A^{(1)}_{1,s}\cup\cdots\cup A^{(1)}_{q',s}).
\nonumber
\end{align}
Suppose that the first equality holds.
Then we define the first two steps in our path like so:
\begin{align}
\Bigl(&\stB^{(1)}_{i_0} = \stB_\top \cup \left\{\bigl(\{1,\ldots,s\},(A^{(1)}_{t,1},\ldots,A^{(1)}_{t,s})\bigr) \:\Big|\: 1 \leq t \leq q^{(1)}\right\}, \\
& \stB_\top \cup \left\{\bigl(\{1,\ldots,s\},(A^{(1)}_{t,1},\ldots,A^{(1)}_{t,s})\bigr) \:\Big|\: 1 \leq t \leq q^{(1)}\right\} \cup \left\{\bigl(\{1,\ldots,s\},(A^{(2)}_{t,1},\ldots,A^{(2)}_{t,s})\bigr) \:\Big|\: 1 \leq t \leq q'\right\},
\nonumber\\
& \stB_\top \cup \left\{\bigl(\{1,\ldots,s\},(A^{(1)}_{t,1},\ldots,A^{(1)}_{t,s})\bigr) \:\Big|\: 2 \leq t \leq q^{(1)}\right\} \cup \left\{\bigl(\{1,\ldots,s\},(A^{(2)}_{t,1},\ldots,A^{(2)}_{t,s})\bigr) \:\Big|\: 1 \leq t \leq q'\right\}\Bigr).
\nonumber
\end{align}
If, on the other hand, the second equation in \eqref{eq:conn_containments} holds, we define the first two steps in our path like so:
\begin{align}
\Bigl(&\stB^{(1)}_{i_0} = \stB_\top \cup \left\{\bigl(\{1,\ldots,s\},(A^{(1)}_{t,1},\ldots,A^{(1)}_{t,s})\bigr) \:\Big|\: 1 \leq t \leq q^{(1)}\right\}, \\
& \stB_\top \cup \left\{\bigl(\{1,\ldots,s\},(A^{(1)}_{t,1},\ldots,A^{(1)}_{t,s})\bigr) \:\Big|\: 1 \leq t \leq q^{(1)}\right\} \cup \left\{\bigl(\{1,\ldots,s\},(A^{(2)}_{1,1},\ldots,A^{(2)}_{1,s})\bigr)\right\},
\nonumber\\
& \stB_\top \cup \left\{\bigl(\{1,\ldots,s\},(A^{(1)}_{t,1},\ldots,A^{(1)}_{t,s})\bigr) \:\Big|\: q'+1 \leq t \leq q^{(1)}\right\} \cup \left\{\bigl(\{1,\ldots,s\},(A^{(2)}_{1,1},\ldots,A^{(2)}_{1,s})\bigr)\right\}\Bigr).
\nonumber
\end{align}
By proceeding in this fashion, we can construct a path from $\stB^{(1)}$ to $\stB^{(2)}$ whose elements are either of codimension 1 or 2 in $W_{\bm^{i_0}}$
and which are bounded below by $G$.

\medskip

\noindent{\it Step 2: For $G \in W_\bn$ with $d(G) \geq 2$, $[F_{-1},G]$ is connected.}

\medskip

\noindent Fix $2T \in W_\bn^\tree$ with $d(2T) \geq 2$.
By Def.-Lem.~\ref{deflem:Gamma2T}, we have the following formula for $[F_{-1},2T]$:
\begin{align}
\label{eq:Wn_conn_decomp}
[F_{-1},2T]
\simeq
\{F_{-1}\}
\cup
\prod_{
{\alpha \in V_\comp^1(T_b)}
\atop
{\incom(\alpha)=(\beta)}
} W_{\#\!\incom(\beta)}^\tree
\times
\prod_{\rho \in V_\inte(T_s)} \prod^{K^\tree_{\#\!\incom(\rho)}}_{
{\alpha\in V_\comp^{\geq2}(T_b)\cap f^{-1}\{\rho\}}
\atop
{\incom(\alpha)=(\beta_1,\ldots,\beta_{\#\!\incom(\rho)})}
}
\hspace{-0.25in} W^\tree_{\#\!\incom(\beta_1),\ldots,\#\!\incom(\beta_{\#\!\incom(\rho)})}.
\end{align}
A calculation using Lemma~\ref{lem:Wn_dim} yields the following equality:
\begin{align}
\label{eq:fiber_decomp_of_d}
d(G)
=
\sum_{{\alpha \in V_\comp^1(T_b)}
\atop
{\incom(\alpha) = (\beta)}} \dim(W_{\#\!\incom(\beta)}^\tree)
+
\sum_{\rho \in V_\inte(T_s)}
\dim\left(\prod^{K^\tree_{\#\!\incom(\rho)}}_{
{\alpha\in V_\comp^{\geq2}(T_b)\cap f^{-1}\{\rho\}}
\atop
{\incom(\alpha)=(\beta_1,\ldots,\beta_{\#\!\incom(\rho)})}
}
\hspace{-0.25in} W^\tree_{\#\!\incom(\beta_1),\ldots,\#\!\incom(\beta_{\#\!\incom(\rho)})}\right)
\end{align}
(We have not shown that the fiber products are abstract polytopes.
The dimension of this fiber product should be interpreted as the dimension of the top face, using the dimension function $d$ defined in \eqref{eq:d_on_fiber_product}.)
The inequality $d(G)\geq 2$ implies that either (a) one of the posets in \eqref{eq:fiber_decomp_of_d} has dimension at least 2 or (b) at least two of the posets in \eqref{eq:fiber_decomp_of_d} have positive dimension.
If (b) holds, $[F_{-1},G]$ is clearly connected.
Next, suppose (a) holds.
If $\dim(W_{\#\!\incom(\beta)}^\tree) \geq 2$ for some $\alpha \in V_\comp^1(T_b), \incom(\alpha) = (\beta)$, then the connectedness of $[F_{-1},G]$ follows from the isomorphism $W_{\#\!\incom(\beta)}^\tree \simeq K_{\#\!\incom(\beta)}^\tree$ proven in Lemma~\ref{lem:WnKn} and the strong connectedness of the associahedra proven in Prop.~\ref{prop:Kr_polytope}.
If one of the fiber products in \eqref{eq:fiber_decomp_of_d} has dimension at least 2, then the connectedness of $[F_{-1},G]$ follows from Step 1.

\medskip

\noindent{\it Step 3: For $F < G$ in $W_\bn$ with $d(G) - d(F) \geq 3$, $[F,G]$ is connected.}

\medskip

\noindent The argument in Step 2 applies equally well to this case.
\end{proof}



\appendix

\section{2- and 3-dimensional 2-associahedra}
\label{app:ex}

In this appendix, we work out all\footnote{We make use of the identity $W_{n_1,n_2\ldots,n_r} \simeq W_{n_r,n_{r-1},\ldots,n_1}$.
Also, we do not include 2-associahedra of the form $W_{n_1}$ or
$W_{0,\ldots,0,1,0,\ldots,0}$, since these can be identified with associahedra via Lemma~\ref{lem:WnKn} and the following remark.} the 2- and 3-dimensional 2-associahedra $W_\bn$.
First, we record the face vectors of these examples and note whether or not $\wh{W_\bn}$ is simple.
(An abstract $d$-polytope is \emph{simple} if every 0-face is adjacent to exactly $d$ 1-faces.
The author believes that for every $r \geq 2$ and $\bn \in \bZ^r_{\geq0}$ with $|\bn| \geq 4$, $\wh{W_\bn}$ is not simple.)
Second, we give ``net'' representations of these examples, where each vertex corresponds to a 0-face of $W_\bn$, each edge to a 1-face, and each polygonal face to a 2-face.
In the case of the 3-dimensional examples, we do not represent the 3-dimensional face.
Each numbered edge should be identified with the correspondingly numbered edge.
Some faces are labelled by the 2-bracketings to which they correspond.

\bigskip

\begin{center}
\begin{tabular}{|c|c|c||c|c|c|}
\hline
\multicolumn{3}{|c||}{2-dimensional 2-associahedra} & \multicolumn{3}{|c|}{3-dimensional 2-associahedra} \\
\hline
$\bn$ & face vector & simple? & $\bn$ & face vector & simple? \\
\hline
30 & (6,6,1) & yes & 40 & (21,32,13,1) & no \\
\hline
21 & (8,8,1) & yes & 31 & (36,56,22,1) & no \\
\hline
200 & (5,5,1) & yes & 22 & (44,69,27,1) & no \\
\hline
110 & (6,6,1) & yes & 300 & (18,27,11,1) & yes \\
\hline
101 & (4,4,1) & yes & 210 & (30,45,17,1) & yes \\
\hline
020 & (6,6,1) & yes & 201 & (18,27,11,1) & yes \\
\hline
\multicolumn{3}{|c||}{} & 120 & (32,48,18,1) & yes \\
\hline
\multicolumn{3}{|c||}{} & 030 & (24,36,14,1) & yes \\
\hline
\multicolumn{3}{|c||}{} & 2000 & (14,21,9,1) & yes \\
\hline
\multicolumn{3}{|c||}{} & 1100 & (18,27,11,1) & yes \\
\hline
\multicolumn{3}{|c||}{} & 1010 & (14,21,9,1) & yes \\
\hline
\multicolumn{3}{|c||}{} & 1001 & (10,15,7,1) & yes \\
\hline
\multicolumn{3}{|c||}{} & 0200 & (18,27,11,1) & yes \\
\hline
\multicolumn{3}{|c||}{} & 0110 & (22,33,13,1) & yes \\
\hline
\end{tabular}
\end{center}

\newpage

\begin{figure}[H]
\centering
\includegraphics[width=0.99\columnwidth]{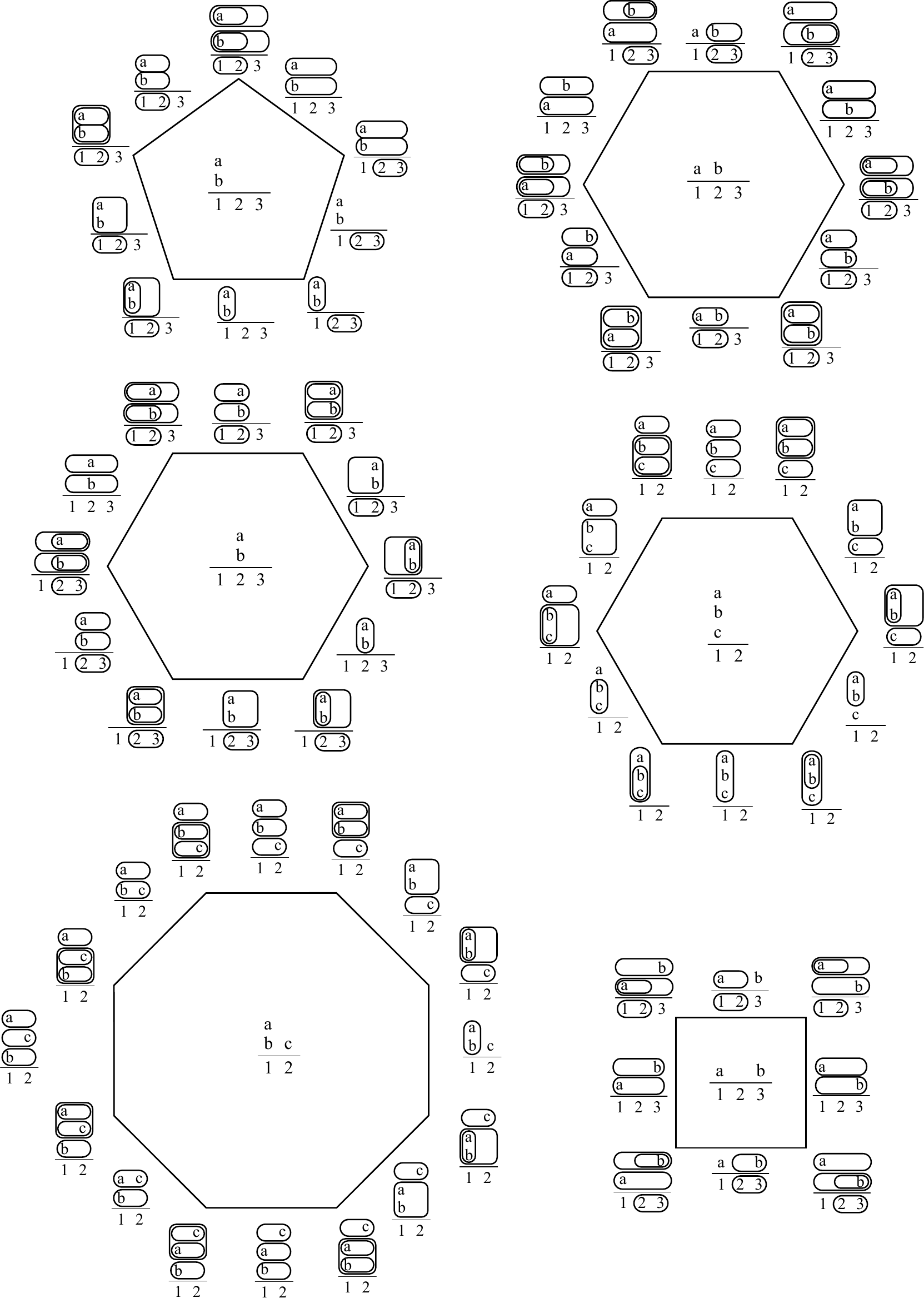}
\end{figure}

\newpage

\begin{figure}[H]
\centering
\includegraphics[width=\columnwidth]{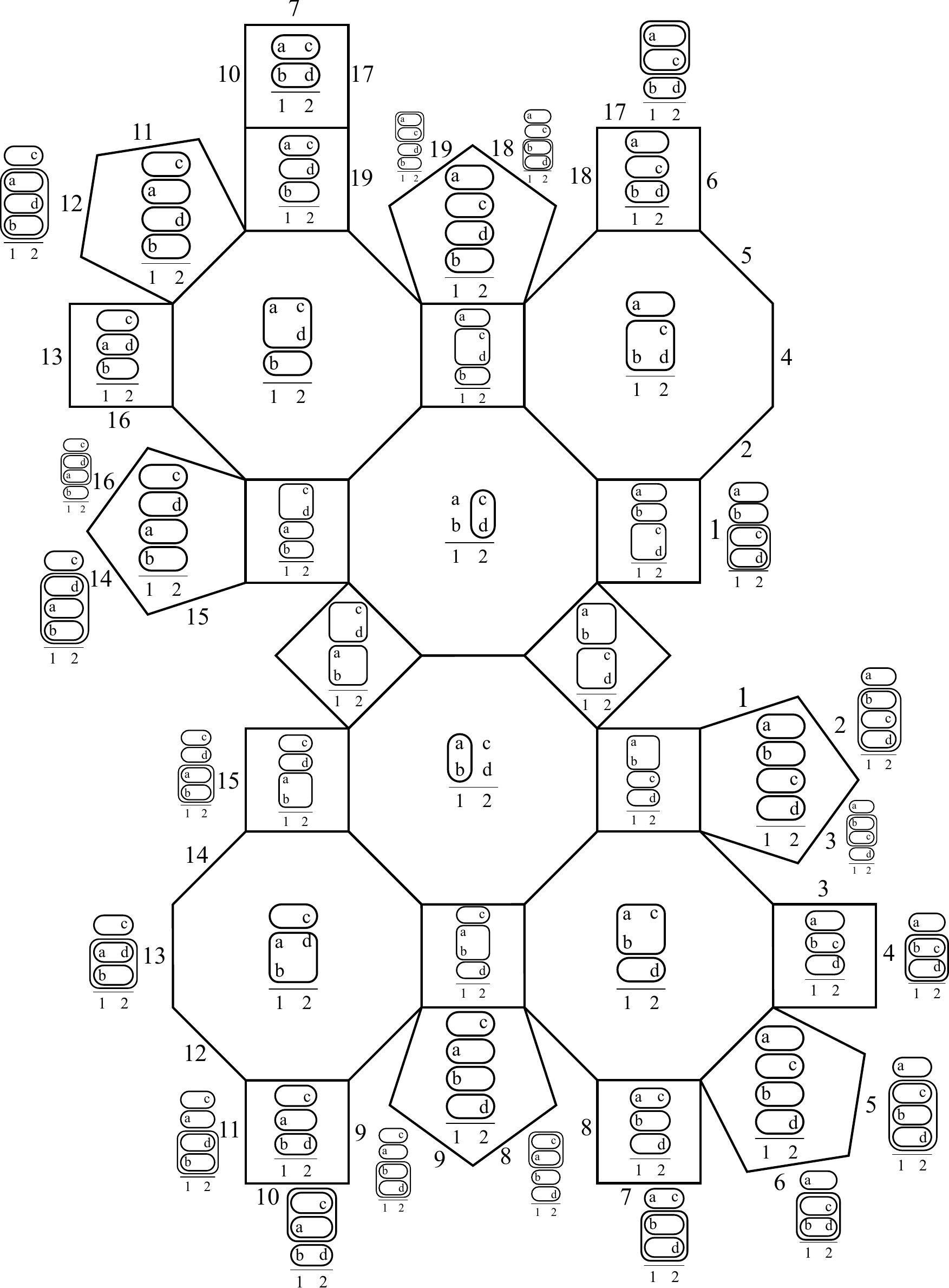}
\end{figure}

\newpage

\begin{figure}[H]
\centering
\includegraphics[width=\columnwidth]{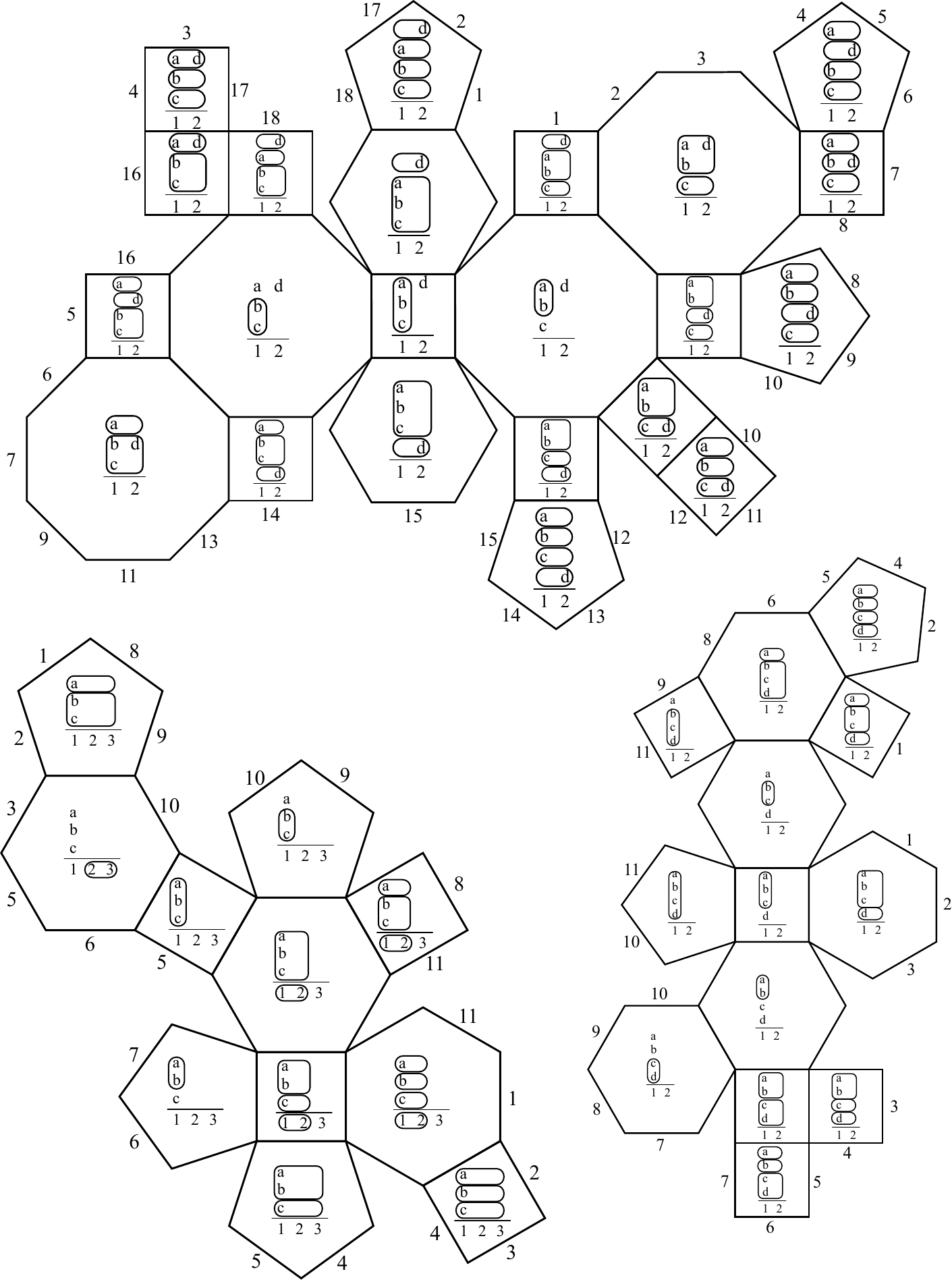}
\end{figure}

\newpage

\begin{figure}[H]
\centering
\includegraphics[width=1.0\columnwidth]{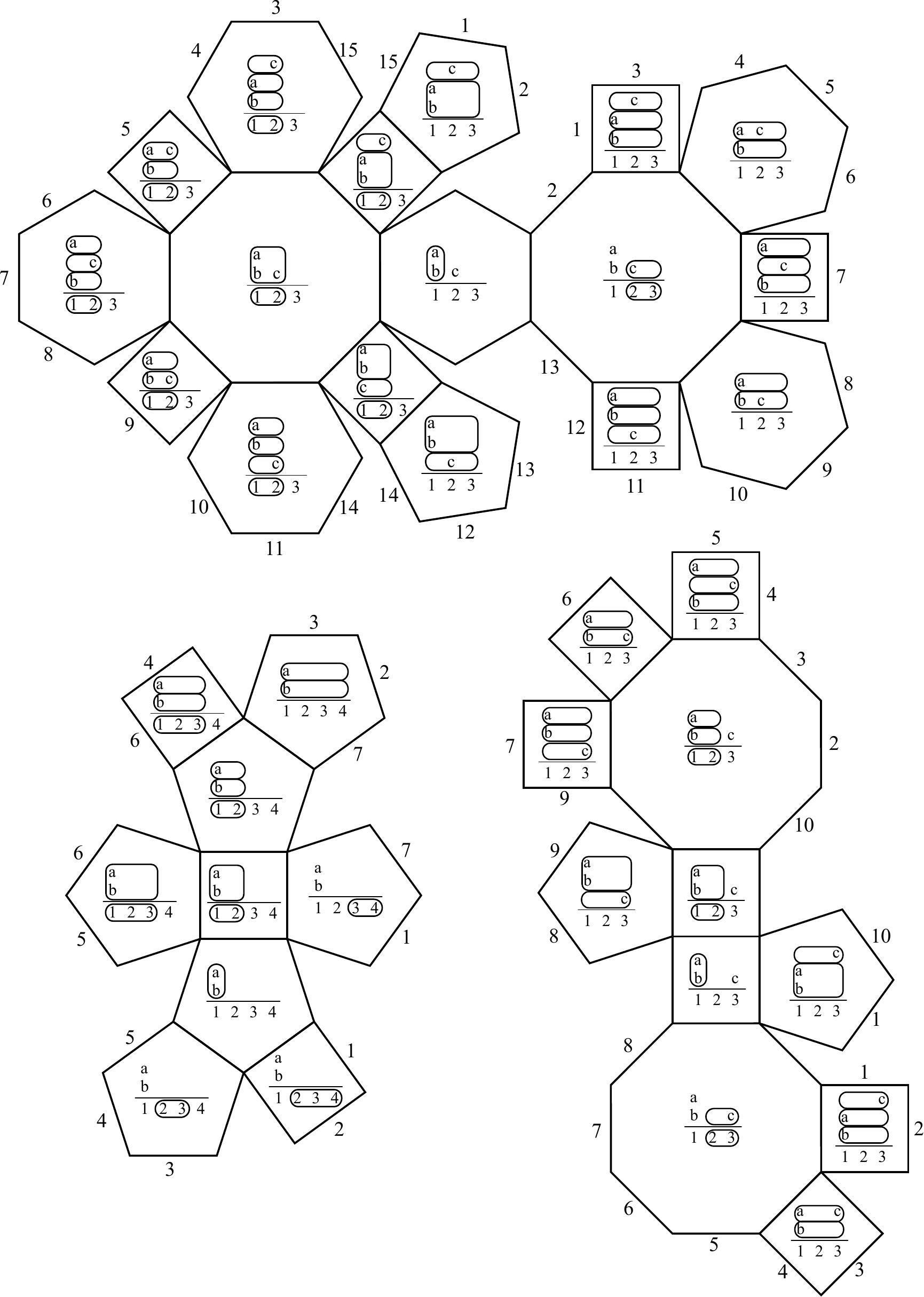}
\end{figure}

\newpage

\begin{figure}[H]
\centering
\includegraphics[width=0.98\columnwidth]{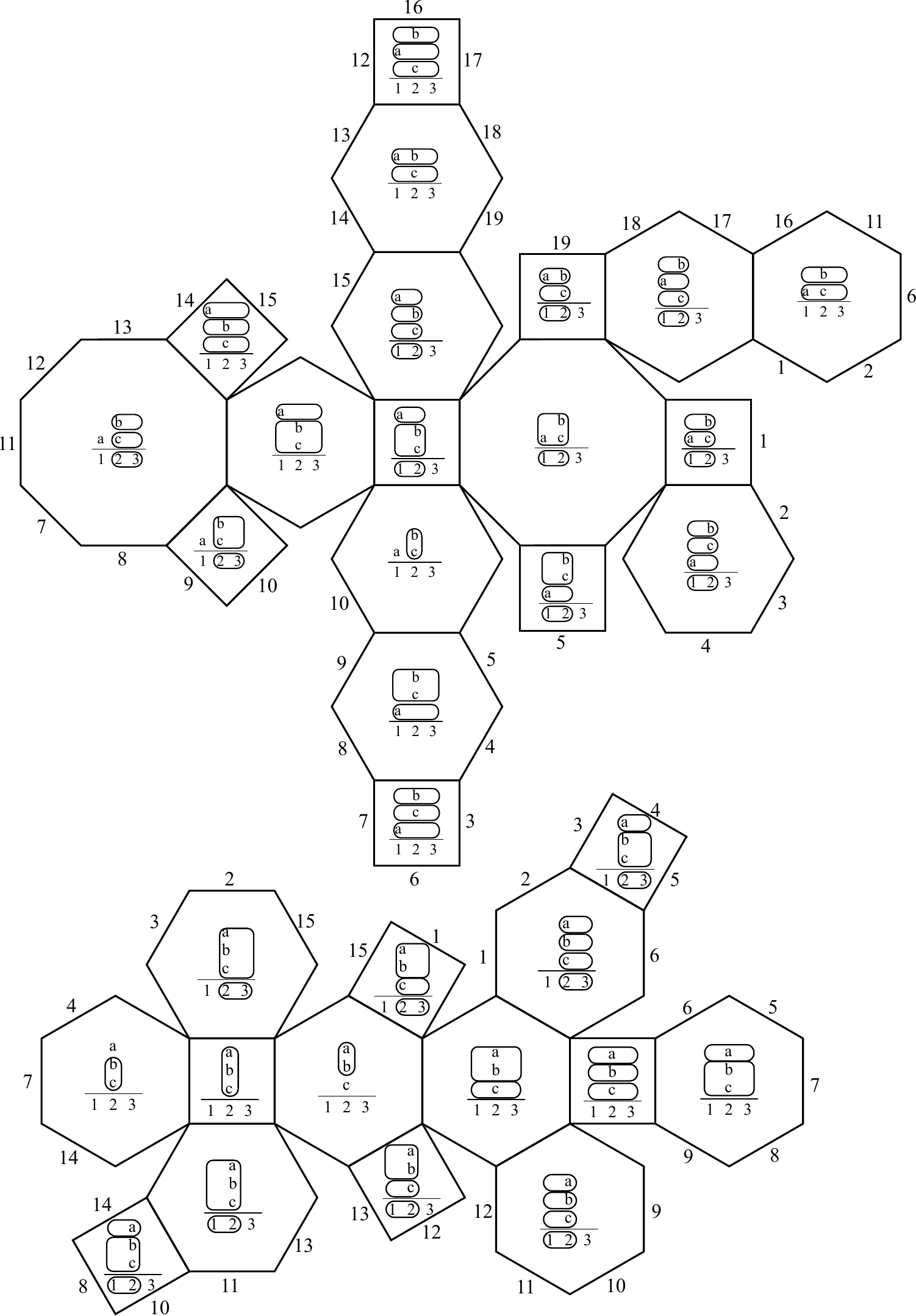}
\end{figure}

\newpage

\begin{figure}[H]
\centering
\includegraphics[width=0.9\columnwidth]{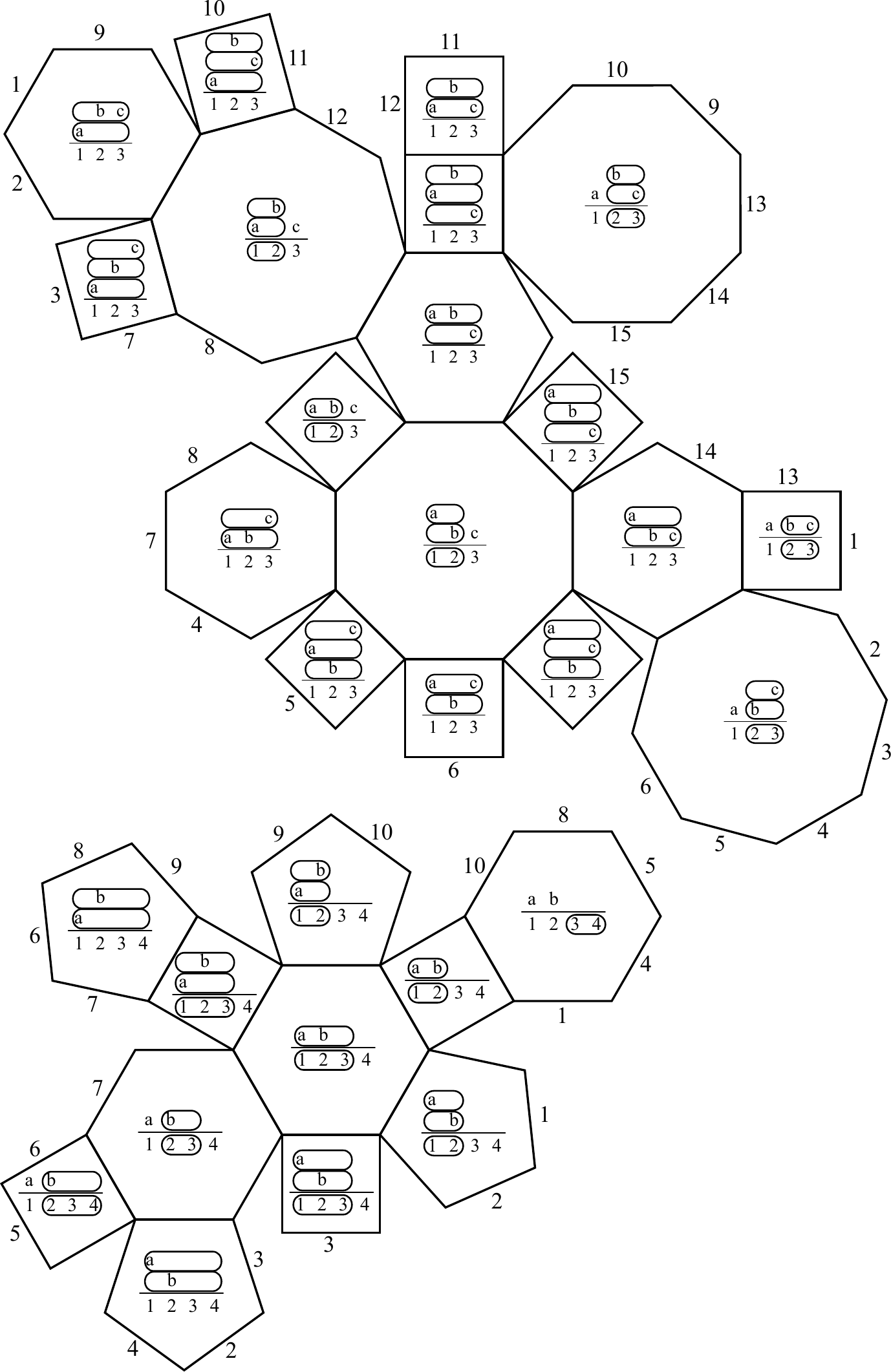}
\end{figure}

\newpage

\begin{figure}[H]
\centering
\includegraphics[width=0.78\columnwidth]{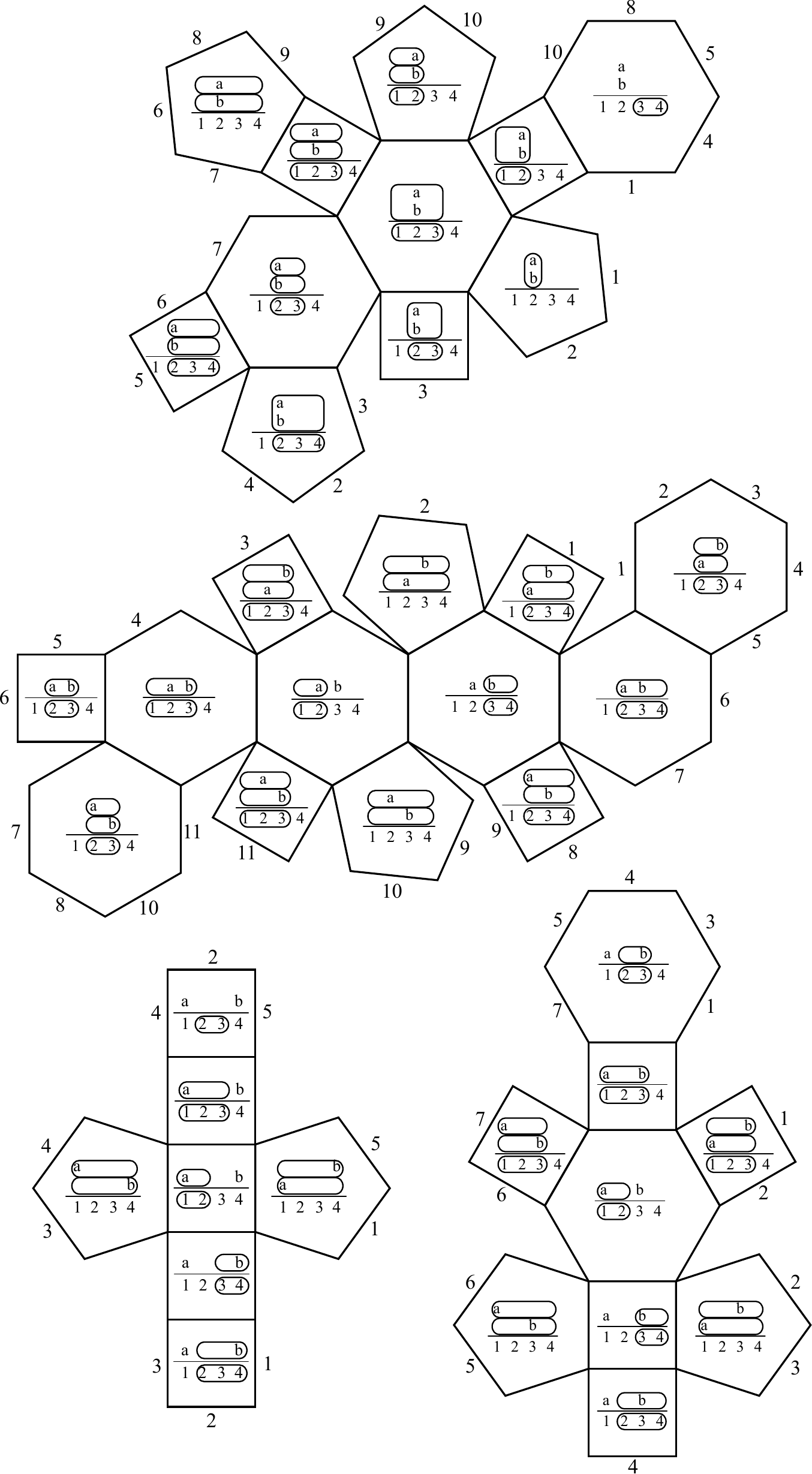}
\end{figure}

\end{document}